\documentclass[12pt,a4paper,oneside]{amsart}

\usepackage{amsthm}
\usepackage{amsmath}
\usepackage{amssymb}
\usepackage{amscd}
\usepackage[utf8]{inputenc}
\usepackage{typearea}
\usepackage{eufrak}
\usepackage{yfonts}
\usepackage{textcomp}
\usepackage{mathrsfs}
\usepackage{hyperref}
\usepackage[draft]{fixme} 
\usepackage{pdfsync}
\usepackage[active]{srcltx}
\usepackage{xypic}
\usepackage{verbatim}

 \newcommand{\norm}[1]{\left\lVert#1\right\rVert}

 \theoremstyle{plain}
 \newtheorem{theorem}{Theorem}[section]
 \newtheorem{corollary}[theorem]{Corollary}
 \newtheorem{lemma}[theorem]{Lemma}
 \newtheorem{proposition}[theorem]{Proposition}
 \newtheorem{remark}[theorem]{Remark}
 \newtheorem{example}[theorem]{Example}
 
  \newtheorem{notation}[theorem]{Notation}
 
 \theoremstyle{definition}
 \newtheorem{definition}[theorem]{Definition}

\title[Twisted $L^p$-operator algebras]{Rigidity of twisted groupoid $L^p$-operator algebras.}

\begin{document}

\author{Einar V. Hetland}
\address{Department of Mathematical Sciences\\
	NTNU\\
	NO-7491 Trondheim\\
	Norway } \email{einarhetland96@gmail.com}

\author{Eduard Ortega}
\address{Department of Mathematical Sciences\\
NTNU\\
NO-7491 Trondheim\\
Norway } \email{eduard.ortega@ntnu.no}


\subjclass[2010]{22D20, 43A15, 47B01}

\keywords{$L^p$-spaces, operator algebras, étale groupoid, twist}

\date{\today}

 
 \begin{abstract}
 	In this paper we   study the isomorphism problem for  reduced twisted group and groupoid $L^p$-operator algebras. For a locally compact group $G$ and a continuous 2-cocycle $\sigma$ we  define the reduced $\sigma$-twisted $L^p$-operator algebra $F_\lambda^p(G,\sigma)$. We  show that if $p\in (1,\infty)\setminus\{2\}$, then two such algebras are isometrically isomorphic if and only if the groups are topologically isomorphic and the continuous 2-cocyles are cohomologous. For a twist $\mathcal{E}$ over an étale groupoid $\mathcal{G}$, we define the reduced twisted groupoid $L^p$-operator algebra $F^p_\lambda(\mathcal{G};\mathcal{E})$. In the main result of this paper, we show that for $p\in[1,\infty)\setminus\{2\}$ if the groupoids are topologically principal, Hausdorff, étale and have a compact unit space, then two such algebras are isometrically isomorphic if and only if the groupoids are isomorphic and the twists are properly isomorphic.
 \end{abstract}
 
 \maketitle

 \section{Introduction}
 
 The study of operators on Hilbert spaces, in particular $C^\ast$-algebras, is one of the most active fields within 
 functional analysis today and there is enormous literature on $C^\ast$-algebras. The term $C^\ast$-algebra was first coined in 1947 by Segal to describe norm closed self-adjoint subalgebras of the bounded operators on some Hilbert space. A natural 
 generalization of this is to consider closed  non-selfadjoint algebras and replace the 
 Hilbert space, which is an $L^2$-space, with a general $L^p$-spaces. Given $p \in [1,\infty]$, we define an $L^p$-operator algebra to be a Banach
 algebra that that admits an isometric homomorphism to the space of bounded linear operators on some $L^p$-space. Hilbert spaces are very well behaved compared with general $L^p$-spaces and the much more complex geometry of $L^p$-spaces due to the lack of inner product when $p\neq 2$ makes $L^p$-operator algebras 
 usually much harder to study, even for $L^p$-operator algebras 
 that "look" like $C^\ast$-algebras. Many basic tools available in the $C^\ast$-algebra setting fail to hold for $L^p$-operator algebras. There exists for instance no general theory of $L^p$-operator algebras, and there is no abstract characterization of when a Banach algebra is an $L^p$-operator algebra. For these reasons the field has for the most part been studied through examples. However, more recent works have approached the field more systematically and abstractly, showing drips of a more general theory. 
 
 The field of $L^p$-operator algebras has seen a recent renewal in interest, spurred by new ideas and techniques taken from the field of operator algebras. There has been especially fruitful research on $L^p$-operator algebras that look like $C^\ast$-algebras, sparked by Christopher Phillips studies on the the $L^p$-analogues of the Cuntz algebra $\mathcal{O}_n$ and UHF-algebras \cite{Phi1,Phi2}.
 The more complex geometry of the unit ball of $L^p$-spaces, compared to $L^2$-spaces, allows  to prove some interesting rigidity results \cite{GarLup}. Besides the rigidity results, in \cite{Gardella_Thiel_16}  it is provided an unexpected solution to Le Merdy's problem for quotients of $L^p$-operator algebras. 
 
 
 For a locally compact group $G$, we denote the reduced group $L^p$-operator algebra by $F_\lambda^p(G)$ when $p=2$ this is the the reduced group $C^*$-algebra $C_\lambda^\ast(G)$. We will therefore say $F_\lambda^p(G)$ is the $L^p$-analogue of the reduced group  $C^\ast$-algebra. These algebras were first introduced by Hertz in the 1970s, under the name $p$-pseudofunctions. In the last decade there has been a renewed interest in these algebras and other group algebras on $L^p$-spaces, in particular of our interest is the result by Gardella and Thiel in \cite{gardella2018isomorphisms} which shows that, for $p\in (1,\infty)\setminus\{2\}$, two such algebras are isometrically isomorphic if and only if the groups are topological isomorphic. Surprisingly, this contrasts with the reduced group $C^\ast$-algebras where two non-isomorphic groups can generate isomorphic, and hence isometric, algebras. 
 
 Another interesting rigidity result comes from $L^p$-operator algebras associated to  étale  groupoids \cite{Lupin2014}. Given an étale groupoid one can define the reduced groupoid $L^p$-operator algebra $F^p_\lambda(\mathcal{G})$ so when $p=2$ it coincides with the reduced groupoid $C^*$-algebra defined in \cite{RenaultJean1980AGAt}. In \cite{choi2021rigidity}, the authors prove that the reduced groupoid $L^p$-operator algebras of two topological principal étale Hausdorff groupoids are isometrically isomorphic if and only if the groupoids are isomorphic. We would like to emphasize that even though groups are groupoids, the only group satisfying as a groupoid these hypothesis is the trivial group.

 In this paper, we will be interested in the $L^p$-operator algebras arising from the projective, or twisted, representations of both groups and groupoids. These kind of representations give rise to very interesting classes of operator algebras, where the noncommutative torus \cite{Rieffel} is among the most popular ones. 
 First, given a continuous 2-cocycle $\sigma$ on a locally compact group $G$ we will define the $\sigma$-twisted reduced $L^p$-operator algebra denoted by $F_\lambda^p(G,\sigma)$. For $p=2$, this is the $\sigma$-twisted reduced group $C^\ast$-algebra, which is a very well studied $C^*$-algebra \cite{Packer}. In the first main result  of this paper (Theorem \ref{main-group}) we will extend the isomorphism result of Gardella and Thiel to the reduced twisted group $L^p$-operator algebra and show that two such algebras (with $p\in (1,\infty)\setminus \{2\}$) are isometrically isomorphic if and only if the groups are topological isomorphic and the 2-cocycles are cohomologous. Again, this contrasts with the case when $p=2$, where two non-cohomologous cocyles on the same  group can give rise to isomorphic algebras \cite[Theorem 2]{Rieffel1}.

 Similarly to the group algebra setting, given a normalized continuous cocycle $\sigma$ on a Hausdorff étale groupoid $\mathcal{G}$, one can define the reduced twisted groupoid $L^p$-operator algebra denoted by $F^p_\lambda(\mathcal{G},\sigma)$, but formally more general, one can define the notion of twisted reduced groupoid $L^p$-operator algebra from a twist, based on Kumjian's work on the groupoid $C^\ast$-algebra \cite{kumjian_1986}. For a twist $\mathcal{E}$ over an étale groupoid $\mathcal{G}$ we define the twisted reduced groupoid $L^p$-operator algebra denoted by $F_\lambda^p(\mathcal{G};\mathcal{E})$. In \cite{renault2008cartan}, Renault shows that a topologically principal groupoid with a twist can be recovered from its twisted reduced groupoid $C^\ast$-algebra together with the canonical abelian subalgebra. Following Renault's work in \cite{renault2008cartan} and Choi, Gardella, and Thiel's \cite{choi2021rigidity}, we will prove our second main result (Theorem \ref{main_result}), showing that for a twist $\mathcal{E}$ over a topologically principal, Hausdorff, étale groupoid $\mathcal{G}$, one can recover the groupoid and the twist from the twisted reduced groupoid $L^p$-operator algebra when $p \neq 2$. This implies that if the groupoids are topologically principal Hausdorff, étale and $p\neq 2$, then two such algebras are isometrically isomorphic if and only if the groupoids are isomorphic and the twists are properly isomorphic.
 
 We would like to mention that even though some of the results and proofs are extensions of the ones in the non-twisted case, they are,  in most of the cases, far from being trivial and sometimes require more elaborate techniques to prove them (see fo example Theorem \ref{amenable_twisted} where we use a Fourier decomposition of the $L^p$-functions of the Mackey group of an amenable group to prove that the reduced norm of the twisted algebra of the group is the maximal possible norm).
 
 The paper is structured as follows. In Section $2$, we give a quick introduction to measures on Boolean algebras and isometries on $L^p$-spaces over measure Boolean algebras. The key point to all the rigidity results for $L^p$-operator algebras (for $p\neq 2$) is Lamperti's Theorem, that says that isometries on $L^p$-spaces are a composition of a scalar multiplication by a unitary $L^\infty$-function and a Boolean algebras transformation in the underlying measurable Boolean algebra. In Section $3$, given a locally compact group $G$ and a  continuous 2-cocycle $\sigma$ on $G$, we define the left regular $\sigma$-twisted representation of $G$ and define the $\sigma$-twisted group $L^p$-operator algebra $F_\lambda^p(G,\sigma)$. In Section $4$, using Lamperti's Theorem we describe the group of isometries of $F_\lambda^p(G,\sigma)$ as the Mackey's group $G_\sigma$, and its homotopy classes (with respect the operator norm topology) as the group $G$. This allows us to prove the first of our rigidity result (Theorem \ref{main-group}). In Section $5$, we define the full twisted group $L^p$-algebra, that is the $L^p$-operator algebra generated by the direct sum of all the twisted representations of the group. We prove that when the group is amenable, then the  full twisted group $L^p$-algebra and the reduced twisted group $L^p$-algebra are isometrically isomorphic regardless of the twist. As an example, we define the noncommutative tori $L^p$-operator algebra as the full twisted $L^p$-operator algebra over $\mathbb{Z}^2$, and we show that two noncommutative tori $L^p$-operator algebras (for $p\in (1,\infty)\setminus \{2\}$) are isometrically isomorphic if and only if the rotations are congruent with respect to the integers. Observe that  the situation is different when $p=2$ \cite{Rieffel1}. In Section $6$, following \cite{RenaultJean1980AGAt,Lupin2014,choi2021rigidity}, given an étale groupoid and a twist (in the sense of \cite{kumjian_1986}), we define the twisted groupoid $L^p$-operator algebra as the completion with respect to the left regular twisted representation of $\Sigma_c(\mathcal{G};\mathcal{E})$, the compact supported $\mathbb{T}$-equivariant functions on the twist $\mathcal{E}$, and will denote it by $F^p_\lambda(\mathcal{G};\mathcal{E})$. By viewing  the twist $\mathcal{E}$ as the linear complex bundle of the groupoid, we can think of $\Sigma_c(\mathcal{G};\mathcal{E})$ as the sections of $\mathcal{E}$. In Section $7$, we will prove the rigidity result for étale groupoids. In order to do so, we will compute the $C^*$-core of $F^p_\lambda(\mathcal{G};\mathcal{E})$, that is the Banach algebras analog of the maximal abelian subalgebras in $C^*$-algebra theory (see \cite{renault2008cartan,kumjian_1986} for example). In contrast with the $C^*$-algebra case, $L^p$-operator algebras (for $p\neq 2$) have a unique $C^*$-core. This is the key result for proving the rigidity result, since this uniqueness of the $C^*$-core makes that any isometric isomorphism between $L^p$-operator algebras reduces to a $C^*$-algebra isomorphism between the $C^*$-cores, and hence one can identify the corresponding Weyl groupoids and the Weyl twists. Finally, if $\mathcal{G}$ is a topologically principal Hausdorff groupoid  and  $\mathcal{E}$ is a twist of  $\mathcal{G}$, then they can be recovered from the  Weyl groupoids and the Weyl twists of $F^p_\lambda(\mathcal{E};\mathcal{G})$, respectively.   This allows us to prove the second of our rigidity result (Theorem \ref{main}).

 \section{Invertible isometries on $L^p$-spaces}
 
 Lamperti's theorem identifies the invertible isometries on $L^p$-spaces when $p\neq2$. In our setting we are working on locally compact groups, for which the left Haar measure is not necessarily a $\sigma$-finite measure. We therefore need  Lamperti's Theorem for the more general case of localizable measure spaces, which is given in \cite{gardella2018isomorphisms}. The concept of a localizable measure is most naturally defined in the setting of measure algebras. We will thus start, following  \cite[Section 2]{gardella2018isomorphisms} and \cite{gardella2019modern}, by defining the measure algebra and measureable function on a measure algebra. For a rigorous and detailed discussion on the  topics see \cite{Fremlin2}.

 Throughout,  let $\mathcal{A}$ be a Boolean algebra.  $\mathcal{A}$ is \emph{($\sigma$-)complete} if every nonempty (countable) subset of $\mathcal{A}$ has a supremum and infimum. A \emph{measure} $\mu$ on $\mathcal{A}$ is map $\mu \colon \mathcal{A} \rightarrow [0,\infty]$ such that $\mu(0) = 0$, $\mu(a) > 0$ for all $a\neq0$ and $\mu(\bigvee_{n\in \mathbb{N}}a_{n}) = \bigvee_{n\in \mathbb{N}}\mu(a_{n})$ whenever $a_{n}$ are pairwise orthogonal. We say that the measure $\mu$ is \emph{semi-finite} if for every $a \in \mathcal{A}$, there exists $b \in \mathcal{A}$ with $b \leq a$ such that $0 < \mu(b) < \infty$. A \emph{measure algebra} is a pair $(\mathcal{A},\mu)$ where $\mathcal{A}$ is a $\sigma$-complete Boolean algebra and $\mu$ is a measure on $\mathcal{A}$. If $\mu$ is semi-finite and $\mathcal{A}$ is complete we say that the measure algebra $(\mathcal{A}, \mu)$ is \emph{localizable}. 
 
 \begin{example}

 	Let $(X,\Sigma,\mu)$ be a measure algebra. The null set $\mathcal{N} = \{E \in \Sigma \colon \mu(E) = 0\}$ is an $\sigma$-ideal in $\Sigma$, and it follows that the quotient $\mathcal{A} = \Sigma / \mathcal{N}$ is a $\sigma$-complete Boolean algebra. Furthermore we can define a natural measure induced by this quotient $\Tilde{\mu} \colon \mathcal{A} \rightarrow [0,\infty]$ given by $\Tilde{\mu}(E + \mathcal{N}) = \mu(E)$ for all $E \in \Sigma$. The induced measure algebra $(\mathcal{A},\Tilde{\mu})$ is called the \emph{measure algebra associated with $(X,\Sigma,\mu)$}.
 \end{example}
 
 Note that the measure space is localizable if and only if the associated measure algebra is localizable \cite[332B]{Fremlin2}. There is also a natural notion of measurable functions and integrals on a measure algebra, as we will define. Let $\mathcal{B}$ be the Borel set of $\mathbb{R}$ (which is also an Boolean algebra). A \emph{measurable real function} on $\mathcal{A}$ is a Boolean homomorphism $f \colon \mathcal{B} \rightarrow \mathcal{A}$ which preserves sumprema of countable sets. We follow the notation of Fremlin \cite{Fremlin2} and write $\{f \in E\}$ for $f(E)$ and $\{f>t\}$ for $f(t,\infty)$. This is to put further emphasis on the connection to the usual measure spaces, as we will see in the following example.
 
 \begin{example}
 	\label{m_func}
 	Let $(X, \Sigma, \mu)$ be a measure space, let $f \colon X \rightarrow \mathbb{R} $ be any measurable real function and let $(\mathcal{A},\bar{\mu})$ be the measure algebra of $(X, \Sigma, \mu)$. We can then obtain a measurable real function on $\mathcal{A}$ from $f$, denoted $\Tilde{f} \colon \mathcal{B} \rightarrow \mathcal{A}$ which is given by $\{\Tilde{f} \in E\} = f^{-1}(E) + \mathcal{N}$ for every Borel subset $E \in \mathcal{B}$. This can also be written as $\{ \Tilde{f} \in E \} = \{x \colon f(x) \in E\} + \mathcal{N}$. 
 \end{example}
 
 
 
 %
 
 Let $(\mathcal{A}, \mu)$ be a measure algebra, let $L_{\mathbb{R}}^{0}(\mathcal{A})$ denote the set of all measurable real functions on $\mathcal{A}$, and let $L^{0}(\mathcal{A}) = \{f+ig \colon f,g \in L_{\mathbb{R}}^{0}(\mathcal{A}) \}$ denote the vector space of measurable complex functions on $(\mathcal{A}, \mu)$. Using that $t \mapsto \mu(\{f>t\})$ is a decreasing function, and hence it is Lebesgue measurable, allows us to define the  following norm 
 $$
 \norm{f}_{1} = \int |f| d\mu = \int_{0}^{\infty} \mu(\{|f| > t\})dt\,,
 $$
 for all $f \in L^{0}(\mathcal{A})$. We can then define the space of integrable functions 
 $$
 L^{1}(\mathcal{A}, \mu) := \{f \in L^{0}(\mathcal{A}) : \norm{f}_{1} < \infty \}\,.
 $$
 Let $f \in L^{1}_{\mathbb{R}}(\mathcal{A},\mu)_+$ be a positive real function, then we define the integral
 $$
 \int f d\mu = \int_{0}^{\infty} \mu(\{f > t\})dt \,.
 $$
 
 This extends linearly to $L^{1}(\mathcal{A}, \mu)$ in the usual manner. We then define the $p$-integrable functions for $p \in (1,\infty)$ as follows
 $$
 L^{p}(\mathcal{A}, \mu) := \{f \in L^{0}(\mathcal{A}) : |f|^p \in L^{1}(\mathcal{A}, \mu)\}\,.
 $$
 
 For $p=\infty$, the norm is given by $\norm{f}_{\infty} = \text{inf}\{ t\geq 0 \colon|f|>t\}=0 \}$ and we define 
 $$
 L^{\infty}(\mathcal{A}, \mu) := \{f \in L^{0}(\mathcal{A}) : \norm{f}_{\infty} < \infty \}.
 $$
 Let $(\mathcal{A}, \bar{\mu})$ be the measure algebra of some measure space $(X, \Sigma, \mu)$. From Example \ref{m_func} recall that for every measurable function on $(X, \Sigma, \mu)$ we obtain a measurable function on $(\mathcal{A}, \bar{\mu})$. It turns out that the relation between $(X, \Sigma, \mu)$ and $(\mathcal{A}, \bar{\mu})$ is even stronger. We can actually identify $L^{p}(\mathcal{A}, \bar{\mu})$ with $L^{p}(X, \Sigma, \mu)$ for all $p\in [1,\infty]$ (Corollary 363I and Theorem 366B in \cite{Fremlin2}). This means that we can apply everything we know about $L^{p}(X, \Sigma, \mu)$ to $L^{p}(\mathcal{A}, \bar{\mu})$.
 
 Localizable measure algebras are in fact the largest class for which the Radon Nikodym Theorem is applicable \cite[Theorem 2.7.]{gardella2018isomorphisms}, this allows to prove  Lamperti's Theorem for localizable measure algebras which is usually only proved for $\sigma$-finite measure spaces.

 We define the group of $\mathbb{T}$-valued functions on $\mathcal{A}$ as the following
 
 $$
 \mathcal{U}(L^{\infty}(\mathcal{A})) := \{f \in L^{0}(\mathcal{A}) \colon |f| = 1\}=1\}\,.
 $$
 
 This forms a group under pointwise multiplication. We denote the group of Boolean automorphisms on $\mathcal{A}$ by $\textrm{Aut}(\mathcal{A})$. We denote the group of surjective isometries on $L^{p}(\mathcal{A},\mu)$ by $\textrm{Isom}(L^{p}(\mathcal{A},\mu))$. Following \cite[Section 3]{gardella2018isomorphisms} we define two families of surjective isometries. For a function $f \in \mathcal{U}(L^{\infty}(\mathcal{A}))$,  the map $m_{f} \colon L^{p}(\mathcal{A}, \mu) \rightarrow L^{p}(\mathcal{A}, \mu)$, given by $m_{f}(\xi) = f\xi$ for all $ \xi \in L^{p}(\mathcal{A}, \mu)$ is a surjective isometry, and  for  $\varphi \in \textrm{Aut}(\mathcal{A})$ the map $u_{\varphi} \colon L^{p}(\mathcal{A}, \mu) \rightarrow L^{p}(\mathcal{A}, \mu)$, given by
 $$
 u_{\varphi}(\xi) := (\varphi \circ \xi)
 \left(\frac{d(\mu \circ \varphi^{-1})}{d\mu}\right)^{\frac{1}{p}}
 $$
 is a surjective isometry on $L^{p}(\mathcal{A}, \mu)$. Moreover,	 given another element $\phi \in \textrm{Aut}(\mathcal{A})$, we have that $u_{\varphi} \circ u_{\phi} = u_{\varphi \circ \phi}$.

 And finally  Lamperti's Theorem itself \cite[Theorem 3.7]{gardella2018isomorphisms}.
 
 \begin{theorem}
 	\label{lamperti}
 	Let $(\mathcal{A}, \mu)$ be a localizable measure algebra, let $p \in [1, \infty) \setminus \{ 2 \} $ and let $T \colon L^{p}( \mathcal{A}, \mu ) \rightarrow L^{p}(\mathcal{A}, \mu )$ be a surjective isometry, then there exists a unique $\varphi \in \textrm{Aut}(\mathcal{A})$ and $f \in \mathcal{U}(L^{\infty}(\mathcal{A})) $ such that $T = m_{f}u_{\varphi}$. Moreover, given $f,g \in  \mathcal{U}(L^{\infty}(\mathcal{A})) $ and $\varphi,\phi \in \textrm{Aut}(\mathcal{A})$, we have 
 	$$\norm{m_f u_\varphi - m_g u_\phi}=\max \{ \norm{f-g}_\infty, 2 \delta_{\varphi, \phi}   \}\,.$$ 
 \end{theorem}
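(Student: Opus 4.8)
The plan is to follow Lamperti's classical argument, carried out at the level of the (localizable) measure algebra rather than a concrete $\sigma$-finite measure space; the single essential use of $p\neq 2$ will be that a linear surjective isometry of $L^p$ must preserve disjointness of supports. Throughout write $\textrm{supp}(\xi)=\bigvee_{t>0}\{|\xi|>t\}\in\mathcal A$ for the support of $\xi\in L^p(\mathcal A,\mu)$. The first step is to show that $T$ preserves disjoint supports: $\textrm{supp}(\xi)\wedge\textrm{supp}(\eta)=0$ if and only if $\textrm{supp}(T\xi)\wedge\textrm{supp}(T\eta)=0$. The tool is the equality case of Clarkson's inequality, namely that for $p\in[1,\infty)\setminus\{2\}$ one has $\norm{\xi+\eta}_p^p+\norm{\xi-\eta}_p^p=2\norm{\xi}_p^p+2\norm{\eta}_p^p$ if and only if $\textrm{supp}(\xi)\wedge\textrm{supp}(\eta)=0$; after identifying $L^p(\mathcal A,\mu)$ with a concrete $L^p$-space this reduces to the pointwise assertion that $|z+w|^p+|z-w|^p-2|z|^p-2|w|^p$ vanishes only when $zw=0$, which uses $p\neq 2$. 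Since $T$ is linear and isometric it preserves the left-hand identity, hence the disjointness relation, and this is precisely the step that fails at $p=2$.

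Next I would extract $\varphi$. For $a\in\mathcal A$ let $L^p_a:=\{\xi:\textrm{supp}(\xi)\le a\}$ be the corresponding band; one checks that a bijective, disjointness-preserving isometry sends bands to bands, so there is $\varphi(a)$ with $T(L^p_a)=L^p_{\varphi(a)}$, and here localizability is used so that $\mathcal A$ is complete and the bands of $L^p(\mathcal A,\mu)$ are in monotone bijection with $\mathcal A$. Then $\varphi$ is a Boolean homomorphism, and an automorphism because $T^{-1}$ induces $\varphi^{-1}$. Setting $S:=T\circ u_{\varphi^{-1}}$ gives a surjective isometry fixing every band, and a band-preserving surjective isometry of $L^p$ is a multiplication operator: on each $a$ with $0<\mu(a)<\infty$ (these exist and have supremum $1$ by semi-finiteness) one has $\textrm{supp}(S\chi_a)=a$ and band-preservation forces $S\xi=f_a\xi$ on $L^p_a$ for some $f_a$ supported on $a$; these local ratios are compatible and, again by completeness of $\mathcal A$, patch to a single $f\in L^0(\mathcal A)$ with $S=m_f$, and $|f|=1$ because $m_f$ is isometric, i.e. $f\in\mathcal U(L^\infty(\mathcal A))$. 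Hence $T=S\,u_\varphi=m_f u_\varphi$. Uniqueness is then easy: if $m_f u_\varphi=m_g u_\phi$, applying both sides to $\chi_a$ with $0<\mu(a)<\infty$ and comparing supports gives $\varphi(a)=\phi(a)$, hence $\varphi=\phi$; cancelling the invertible $u_\varphi$ gives $m_f=m_g$, hence $f=g$.

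For the norm identity I would first reduce to one automorphism. Since $u_\varphi$ is a surjective isometry, $\norm{Au_\varphi}=\norm{A}$ for every bounded $A$, and $u_\phi u_{\varphi^{-1}}=u_{\phi\varphi^{-1}}$, so $\norm{m_f u_\varphi-m_g u_\phi}=\norm{m_f-m_g u_\psi}$ with $\psi:=\phi\circ\varphi^{-1}$, which is the identity exactly when $\varphi=\phi$; factoring out the isometry $m_f$ on the left, this equals $\norm{\textrm{id}-m_h u_\psi}$ with $h:=\bar f g\in\mathcal U(L^\infty(\mathcal A))$. Let $e\in\mathcal A$ be the largest element with $\psi(a)=a$ for all $a\le e$; it exists because the family of such ``fixed parts'' is closed under arbitrary suprema in the complete algebra $\mathcal A$, and then $\psi(e)=e$, so $L^p(\mathcal A,\mu)=L^p_e\oplus_p L^p_{e^c}$ is invariant under both $m_h$ and $u_\psi$ and $\textrm{id}-m_h u_\psi$ is block-diagonal with norm the maximum of its two block norms. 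On $L^p_e$ we have $u_\psi=\textrm{id}$, so that block is $m_{1-h}$, of norm $\norm{(1-h)\chi_e}_\infty=\norm{(f-g)\chi_e}_\infty$. On $L^p_{e^c}$ the automorphism $\psi$ has no nonzero fixed part, and I would estimate this block by decomposing $L^p_{e^c}$ along the orbits of $\psi$: on a tower $b,\psi(b),\dots,\psi^{n-1}(b)$ of pairwise orthogonal rungs one tests against a normalised sum of their characteristic functions with scalar phases chosen to successively cancel the unimodular factor $h$ and the Radon--Nikodym cocycle of $u_\psi$; assembling this with the $L^p_e$-contribution yields $\max\{\norm{f-g}_\infty,2\delta_{\varphi,\phi}\}$, and in particular when $\varphi=\phi$ one has $e=1$ and recovers $\norm{f-g}_\infty$.

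I expect the main obstacle to be the first step: the Clarkson equality is the single place $p\neq 2$ enters and is what makes the theorem true at all. The remainder is technical rather than conceptual — tracking bands and supports through a merely localizable, not necessarily $\sigma$-finite, algebra, where completeness and semi-finiteness take the place of a global exhaustion argument; and, in the norm identity, producing the appropriate Rokhlin-type decomposition of a fixed-point-free Boolean automorphism together with the phase-chasing needed to control $h$ and the Radon--Nikodym derivative on the moving part.
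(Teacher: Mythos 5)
The paper does not prove this theorem at all: it is quoted from \cite{gardella2018isomorphisms} (their Theorem 3.7), so there is no in-paper argument to compare yours against, and I will assess your proposal on its own terms. For the existence and uniqueness of the factorization $T=m_fu_\varphi$ your outline is the standard Lamperti/Gardella--Thiel route and is sound: the equality case of the scalar Clarkson inequality is indeed the only place $p\neq 2$ enters, disjointness preservation gives an automorphism of the band lattice, and localizability (semi-finiteness plus completeness) is invoked exactly where it is needed, namely to identify the bands of $L^p(\mathcal{A},\mu)$ with $\mathcal{A}$ and to patch the local multipliers into a single $f\in\mathcal{U}(L^\infty(\mathcal{A}))$. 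I have no objection to that half.

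The norm identity is where there is a genuine gap. On the ``moving part'' $L^p_{e^c}$ you propose to build towers $b,\psi(b),\dots,\psi^{n-1}(b)$ of pairwise orthogonal rungs of arbitrary height and to choose phases recursively along the tower. But ``$\psi$ has no nonzero fixed part'' does not supply such towers: if $\psi$ has a periodic part of period $n_0$ (for instance, if $\psi$ restricts to an involution), no tower of height exceeding $n_0$ exists, and the recursive phase choice acquires a holonomy obstruction around each finite cycle. This is not a repairable technicality, because on the cyclic part the norm of $\mathrm{id}-m_hu_\psi$ genuinely depends on the product of the values of $h$ (and of the Radon--Nikodym cocycle) around each cycle and can be strictly smaller than $2$. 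Concretely, let $\mathcal{A}$ be the two-atom algebra with both atoms of measure $1$, let $\psi$ be the flip, and take $f=(1,1)$, $\varphi=\mathrm{id}$, $g=(1,-1)$, $\phi=\psi$. Then $(m_fu_\varphi-m_gu_\phi)(s,t)=(s-t,\,s+t)$, and the scalar Clarkson inequality $|s-t|^p+|s+t|^p\le 2\left(|s|^p+|t|^p\right)$ (valid for $1\le p\le 2$, with equality iff $st=0$) gives $\norm{m_fu_\varphi-m_gu_\phi}=2^{1/p}<2$ for $1<p<2$, while the right-hand side of the claimed identity is $\max\{\norm{f-g}_\infty,\,2\}=2$; a similar computation gives $2^{1-1/p}<2$ for $p>2$. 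So your argument cannot close as written, and you should go back to the precise formulation in the cited source: either an aperiodicity-type hypothesis is implicit, or the robust statement when $\varphi\neq\phi$ is only the lower bound $\norm{m_fu_\varphi-m_gu_\phi}\ge 2^{1/p}$, obtained from a single $a$ of finite positive measure with $\varphi(a)\wedge\phi(a)=0$ --- which, together with the exact value $\norm{f-g}_\infty$ in the case $\varphi=\phi$, is all that the applications in Section 4 actually require.
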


 \section{The twisted group $L^p$-operator algebra}
 
 A \emph{Banach algebra} is a Banach space that is also an associative complex algebra such that the norm is submultiplicative. We call the Banach algebra \emph{unital} if it contains a multiplicative identity with norm 1. A \emph{$C^\ast$-algebra} is a Banach algebra $A$, together with an involution such that $\|a^\ast a\| = \|a\|^2$ for all $a \in A$. By the famous Gelfand-Naimark-Segal theorem every $C^*$-algebra can be isometrically embedded into the algebra of bounded operators on a Hilbert space, that is an $L^2$-space. 
 Therefore, one can naturally generalize non-selfadjoint operator algebras in the following way. 
 
 \begin{definition}
 	Let $A$ be a Banach algebra, we say that $A$ is an \emph{$L^p$-operator algebra} if there exists an $L^p$-space $E$ and an isometric homomorphism $A \rightarrow \mathcal{B}(E)$.
 \end{definition}
 Unlike for $C^*$-algebras, it is not known any abstract characterization for $L^p$-operator algebras among all the Banach algebras.

 \begin{definition}
 	\label{cocycle}
 	Let $G$ be a locally compact group. A \emph{continuous 2-cocycle} is a continuous map $\sigma \colon G \times G \rightarrow \mathbb{T}$ satisfying the following:
 	\begin{enumerate}
 		\item $\sigma(x_{1},x_{2})\sigma(x_{1}x_{2},x_{3}) = \sigma(x_{1},x_{2}x_{3})\sigma(x_{2},x_{3})$,
 		\item $\sigma(x,e) = \sigma(e,x) = 1$,
 	\end{enumerate}
 	for all $x, x_{1},x_{2},x_{3} \in G$ and where $e$ is the unit of $G$.
 \end{definition}
 Given a continuous 2-cocycle $\sigma$ we will denote by $\overline{\sigma}$ the continuous 2-cocycle given by $\overline{\sigma}(x,y)=\overline{\sigma(x,y)}$ for every $x,y\in G$.

 For the rest of the paper, $G$ will denote a locally compact group, and the measure will be the left Haar measure. Let $\sigma$ be a continuous 2-cocycle for $G$. The \emph{$\sigma$-twisted convolution product} is defined as
 $$
 (f \ast_\sigma g)(x) = \int_{G} f(y)g(y^{-1}x)\sigma(y,y^{-1}x )dy\,,
 $$
 for $f,g \in L^{1}(G)$. Note that $(L^{1}(G), \ast_\sigma)$ forms a Banach algebra, and we will denote this algebra by $L^{1}(G,\sigma)$. $L^{1}(G,\sigma)$ has an approximate identity (see \cite{Busby}), and it is unital if and only if $G$ is discrete.
 
 We define the  \emph{$\sigma$-twisted left regular representation}  $\lambda_p^\sigma \colon G \rightarrow \mathcal{B}(L^{p}(G))$  as 
 $$
 \lambda_p^\sigma(y)(\xi)(x) = \sigma(y,y^{-1}x)\xi(y^{-1}x)\,,$$ 
 for every $x,y \in G$ and $\xi\in L^{p}(G)$. Similarly, we have the \emph{$\sigma$-twisted right regular representation} $\rho_p^\sigma \colon G \rightarrow \mathcal{B}(L^{p}(G))$ which is given as 
 $$
 \rho_p^\sigma(y)(\xi)(x) = \sigma(x,y)\xi(xy)\,,$$
 for every $x,y \in G$ and $\xi\in L^{p}(G)$.

 \emph{The integrated form of the $\sigma$-twisted left regular representation} $\lambda_p^\sigma \colon L^{1}(G,\sigma) \rightarrow \mathcal{B}(L^{p}(G))$ is defined by 
 $$
 \lambda_p^\sigma(f)(\xi)(x) = \int_{G} f(y)\xi(y^{-1}x)\sigma(y,y^{-1}x )dy\,,$$
 for every $x\in G, f\in L^1(G,\sigma), \xi\in L^p(G)$.
 
 \begin{proposition}
 	\label{commute}
 	Let $G$ be a locally compact group and let $\sigma$ be a continuous 2-cocycle for $G$. Let $p\in [1,\infty)$, then we have that
 	$$
 	\lambda^\sigma_p(z)\rho^{\overline{\sigma}}_p(y) = \rho^{\overline{\sigma}}_p(y)\lambda^\sigma_p(z)\,,
 	$$
 	for all $y,z \in G$.
 \end{proposition}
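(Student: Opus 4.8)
The plan is a direct pointwise computation relying on the 2-cocycle identity of Definition \ref{cocycle}(1). Fix $y,z\in G$ and $\xi\in L^p(G)$, and I would evaluate both composites at an arbitrary point $x\in G$, working with the generators $\lambda^\sigma_p(z),\rho^{\overline{\sigma}}_p(y)$ directly (so that everything stays at the level of genuine pointwise identities of functions, with no measure-theoretic subtleties).

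First I would compute $\lambda^\sigma_p(z)\rho^{\overline{\sigma}}_p(y)$. From the definition, $(\rho^{\overline{\sigma}}_p(y)\xi)(w)=\overline{\sigma(w,y)}\,\xi(wy)$, and applying $\lambda^\sigma_p(z)$ with $w=z^{-1}x$ gives
$$\bigl(\lambda^\sigma_p(z)\rho^{\overline{\sigma}}_p(y)\xi\bigr)(x)=\sigma(z,z^{-1}x)\,\overline{\sigma(z^{-1}x,y)}\,\xi(z^{-1}xy).$$
Composing in the other order, $(\lambda^\sigma_p(z)\xi)(w)=\sigma(z,z^{-1}w)\,\xi(z^{-1}w)$, and then applying $\rho^{\overline{\sigma}}_p(y)$ yields
$$\bigl(\rho^{\overline{\sigma}}_p(y)\lambda^\sigma_p(z)\xi\bigr)(x)=\overline{\sigma(x,y)}\,\sigma(z,z^{-1}xy)\,\xi(z^{-1}xy).$$
Both expressions share the factor $\xi(z^{-1}xy)$, so it remains to compare the scalar coefficients, i.e.\ to check $\sigma(z,z^{-1}x)\,\overline{\sigma(z^{-1}x,y)}=\overline{\sigma(x,y)}\,\sigma(z,z^{-1}xy)$; since $\sigma$ is $\mathbb{T}$-valued, this is equivalent to
$$\sigma(z,z^{-1}x)\,\sigma(x,y)=\sigma(z,z^{-1}xy)\,\sigma(z^{-1}x,y).$$

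This last equality is precisely the cocycle relation $\sigma(x_1,x_2)\sigma(x_1x_2,x_3)=\sigma(x_1,x_2x_3)\sigma(x_2,x_3)$ with $x_1=z$, $x_2=z^{-1}x$, $x_3=y$, noting that $x_1x_2=x$ and $x_2x_3=z^{-1}xy$. This completes the proof. I do not expect any genuine obstacle here: the statement is essentially bookkeeping, and the only point requiring care is matching the arguments of $\sigma$ so that the cocycle identity applies in exactly the form above; the conjugation of $\sigma$ on the right regular representation is what makes this matching work (as in the classical untwisted fact that the left and right regular representations commute).
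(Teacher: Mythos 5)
Your proof is correct and follows essentially the same route as the paper: compute both composites pointwise, cancel the common factor $\xi(z^{-1}xy)$, and verify the scalar identity via the cocycle relation with $x_1=z$, $x_2=z^{-1}x$, $x_3=y$. No issues.
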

 \begin{proof}
 	Fix $y,z \in G$, and let $\xi \in L^p(G)$. Then, writing out the expressions we have
 	\begin{align*}
 		(\lambda^\sigma_p(z)\rho^{\overline{\sigma}}_p(y))(\xi)(x) = \sigma(z,z^{-1}x)\overline{\sigma(z^{-1}x,y)}\xi(z^{-1}xy)\,, 
 	\end{align*}
 	for every $x\in G$, and 
 	\begin{align*}
 		(\rho^{\overline{\sigma}}_p(y)\lambda^\sigma_p(z))(\xi)(x) = \overline{\sigma(x,y)}\sigma(z,z^{-1}xy)\xi(z^{-1}xy)\,. 
 	\end{align*}
 	Setting $x_1 = z$, $x_2 = z^{-1}x$ and $x_3 = y$ and using (2) in Defintion \ref{cocycle}, we have that
 	$$
 	\sigma(z,z^{-1}x)\sigma(x,y) = \sigma(z^{-1}x,y)\sigma(z,z^{-1}xy)\,,
 	$$
 	which can be rewritten to 
 	$$
 	\sigma(z,z^{-1}x)\overline{\sigma(z^{-1}x,y)} = \overline{\sigma(x,y)}\sigma(z,z^{-1}xy)\,,
 	$$
 	and the assertion follows.
 \end{proof}
 
 \begin{definition}
 	Let $G$ be a locally compact group and let $\sigma$ be a continuous 2-cocycle on $G$. Let $p \in [1,\infty)$, we define the \emph{reduced $\sigma$-twisted group $L^p$-operator algebra} as the following
 	$$
 	F_\lambda^p(G,\sigma) = \overline{\lambda^\sigma_p(L^1(G,\sigma))}^{\norm{\cdot}} \subseteq \mathcal{B}(L^p(G)).
 	$$
 \end{definition}
 
 We define \emph{$\sigma$-twisted $p$-convolvers}, $CV_p(G,\sigma) = \lambda_p^\sigma(G)^{\prime\prime}$. Note that by Proposition \ref{commute}, the elements in $CV_p(G,\sigma)$ commute with $\rho_p^{\overline{\sigma}}(x)$ for $x \in G$.

 For a locally compact group $G$ and continuous 2-cocycle on $\sigma$ for $G$, we construct the \emph{Mackey group} associated to $G$ and $\sigma$, denoted $G_\sigma$, which as a topological space is the usual product space $\mathbb{T} \times G$, but with product given by
 $$
 (\gamma_{1},x_{1})(\gamma_{2},x_{2}) = (\gamma_{1}\gamma_{2}\sigma(x_{1},x_{2}),x_{1}x_{2})\,,
 $$
 and the inverse given by
 $$
 (\gamma,x)^{-1} = \left(\overline{\gamma \sigma(x^{-1},x)}, x^{-1}\right).
 $$
 
 \begin{definition}
 	Let $G$ be a locally compact group, and let $\sigma, \kappa$ be  two continuous 2-cocycles for $G$, we say that $\sigma$ is \emph{cohomologous} to $\kappa$, denoted 
 	$\sigma \sim \kappa$, if there exists a continuous function $\gamma \colon G \rightarrow \mathbb{T}$ such that $\sigma(x_{1},x_{2})\overline{\kappa(x_{1},x_2)} = \gamma(x_{1})\gamma(x_{2})\overline{
 		\gamma(x_{1}x_{2})}$ for all $x_{1},x_{2} \in G$. \end{definition}

 \section{$L^p$-rigidity for the  reduced $\sigma$-twisted group $L^p$-operator algebra}
 
 In this section we prove one of the main results of this paper (Theorem \ref{main-group}), the rigidity of the twisted group $L^p$-operator algebras. We would like to emphasize that even though most of the results and proofs in this section are extensions of the analogous results in \cite[Section 4]{gardella2018isomorphisms}, we include them because it is not obvious they should work in the case of the  twisted group $L^p$-operator algebras, so all the details must be checked.
 
 Let $G$ be a locally compact group, and let $\sigma$ be a continuous 2-cocycle for $G$. The Banach space of complex-valued Radon measures on $G$ with bounded variation, denoted by $M(G)$, becomes a unital Banach algebra under twisted convolution, where the twisted convolution product for two measures $\nu_{1}, \nu_{2} \in M(G)$ is given by 
 
 $$
 \int_G f(z)d(\nu_{1} \ast_\sigma \nu_{2})(z) := \int_G \int_G f(xy)\sigma(x,y) d\nu_{1}(x)d\nu_{2}(y)\,,
 $$
 for every $f\in C_{c}(G)$. We will denote the algebra by $M(G,\sigma)$ and call it the \emph{$\sigma$-twisted algebra of measures}. 
 Note that for $\nu \in M(G,\sigma)$ the norm can be given by 
 $$\norm{\nu} = \sup_{|f| \leq 1} \left| \int_G f d\nu \right|\,.$$
 Let $f \in C_c(G)$ with $|f| \leq 1$, we then have that
 \begin{align*}
 	\left| \int_G f d (\nu_1 \ast_\sigma \nu_2) \right| &= \left| \int_G \int_G f(xy)\sigma(x,y) d\nu_{1}(x)d\nu_{2}(y) \right| \\ 
 	&\leq \int_G \left|\int_G f(xy)\sigma(x,y) d\nu_{1}(x) \right| d|\nu_{2}|(y) \\
 	&\leq \norm{\nu_1} \norm{\nu_2}.
 \end{align*}
 It follows that $\norm{\nu_1 \ast_\sigma \nu_2} \leq \norm{\nu_1} \norm{\nu_2}$. Let $\nu_1,\nu_2,\nu_3 \in M(G,\sigma)$, we then have  the following:
 \begin{align*}
 	\int_G f(w)d(\nu_{1} \ast_\sigma (\nu_{2} \ast_\sigma \nu_3))(w) &= \int_G \int_G f(xy)\sigma(x,y) d\nu_1(x)(d \nu_2 \ast_\sigma \nu_3)(y)\\
 	&= \int_G \int_G \int_G f(xyz)\sigma(x,yz) d\nu_1(x)\sigma(y,z)d\nu_2(y)d\nu_3(z) \\
 	&= \int_G \int_G \int_G f(xyz)\sigma(x,y)\sigma(xy,z) d\nu_1(x)d\nu_2(y)d\nu_3(z)\\
 	& = \int_G \int_G f(xz)\sigma(x,z) d(\nu_1 \ast_\sigma \nu_2)(x)d\nu_3(z)\\
 	& =\int_G f(w)d((\nu_{1} \ast_\sigma \nu_{2}) \ast_\sigma \nu_3)(w)\,,
 \end{align*}
 where we used (2) in Definition \ref{cocycle} in the third equality. It follows that twisted convolution is associative and submultiplicative. Thus,  $M(G,\sigma)$ is a Banach algebra.

 Now given $p\in [1,\infty)$, we can then define the left regular representation of $M(G,\sigma)$ on $L^p(G)$, denoted by $\lambda_p^\sigma$, which is given by
 $$
 \lambda_p^\sigma(\nu)(f)(x) =
 \int_G \lambda_p^\sigma(y)(f)(x)d\nu(y) = \int_G f(y^{-1}x)\sigma(y,y^{-1}x)d\nu(y)\,,
 $$
 for all $\nu \in M(G,\sigma)$, $f \in L^{p}(G)$ and $x\in G$. This is a bounded operator. Indeed,  given $\xi \in L^{p}(G)$ we have that
 \begin{align*}
 	\norm{\lambda_p^\sigma(\mu) (\xi)}_{p} &= \left( \int \left|\int \lambda_p^\sigma(y)(\xi)(x)d \nu(y) \right|^{p} d\mu(x) \right)^{1/p} \leq \int \left( \int \left|\lambda_p^\sigma(y)(\xi)(x)\right|^{p}d\mu (x)\right)^{1/p} d\nu(y) \\
 	&= \norm{\xi}_p \int_G d\nu \leq \norm{\xi}_p \norm{\mu}_{M^{1}(G)}\,,
 \end{align*}
 where the inequality is due to Minkowski's integral inequality.

 Let $y \in G$ and let $\delta_y$ be the point mass measure associated with $y$. Note that $\lambda_p^\sigma(\delta_y)\xi = \lambda_p^\sigma(y)\xi$ for every $\xi\in L^p(G)$.

 We denote the $L^p$-operator algebra generated by the twisted left representation of the  twisted algebra of measures by $M_{\lambda}^{p}(G,\sigma) := \overline{\lambda_p^\sigma(M(G,\sigma))}^{\norm{\cdot}} \subseteq \mathcal{B}(L^{p}(G))$. 
 
 \begin{definition}
 	Let $A$ be a unital Banach algebra. We define the group of invertible isometries in $A$ by 
 	$$ \mathcal{U}(A)=\{v\in A: v\text{ is invertible and }\norm{v}=\norm{v^{-1}}=1 \}\,.$$
 \end{definition}
 
 \begin{remark}
 	Observe that if $\pi\colon A\to \mathcal{B}(E)$ is a unital isometric representation of $A$ on an $L^p$-space $E$, then $\mathcal{U}(A)$ coincides with $\pi^{-1}(\pi(A)\cap \textrm{Isom}(E))$.
 \end{remark} 
 
 The first goal of this section is to show that we can algebraically identify the Mackey group $G_\sigma$ with the invertible isometries in $CV_{p}(G,\sigma)$, analogous to what  Gardella and Thiel showed in \cite{gardella2018isomorphisms}. Let $(\mathcal{A},\overline{\mu})$ be the Boolean measure algebra associated to $(G,\mu)$.  Fix some element $y \in G$, we write $\sigma_y(x) = \overline{\sigma(x,y)}$, and note that $\sigma_{y} \in \mathcal{U}(L^{\infty}(\mathcal{A}))$. Recall that right multiplication of $y$ induces a Boolean automorphism $r_y \in \textrm{Aut}(\mathcal{A})$ and note that $\rho_p(y) = u_{r_{y^{-1}}} \in \textrm{Isom}(L^p(\mathcal{A}))$. Using notation in Section 2, we therefore have that $\rho_p^{\overline{\sigma}}(y) = m_{\sigma_{y}}u_{r_{y^{-1}}} \in \textrm{Isom}(L^p(\mathcal{A}))$, which will be important for the next result.
 
 \begin{theorem}
 	\label{c}
 	Let $G$ be a locally compact group and let $\sigma$ be a continuous 2-cocycle. Let $p\in[1,\infty)\setminus{\{2\}}$. Then, given $v \in \mathcal{U}(CV_{p}(G,\sigma))$ there exist a unique $\gamma \in \mathbb{T}$ and $g \in  G$ such that $v = \gamma \lambda_p^\sigma(g)$. Furthermore, given $\beta,\gamma\in \mathbb{T}$ and $g,h\in G$ we have
 	\begin{equation}\label{norm_iso}
 		\|\gamma \lambda_p^\sigma(g)-\beta \lambda_p^\sigma(h)\|=\max \{|\beta-\gamma|, 2\delta_{g,h}\}\,.
 	\end{equation}
 \end{theorem}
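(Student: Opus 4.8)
The plan is to run Lamperti's Theorem (Theorem~\ref{lamperti}) in tandem with the commutation relations of Proposition~\ref{commute}. Let $(\mathcal{A},\overline{\mu})$ denote the measure algebra of $(G,\mu)$; it is localizable since the left Haar measure is. First I would note that any $v\in\mathcal{U}(CV_p(G,\sigma))$ is automatically a surjective isometry of $L^p(G)$: it is a bijection of $L^p(G)$ with $\norm{v}=\norm{v^{-1}}=1$, whence $\norm{v\xi}_p=\norm{v^{-1}v\xi}_p\ge\norm{\xi}_p\ge\norm{v\xi}_p$ for all $\xi$. So $v\in\textrm{Isom}(L^p(\mathcal{A}))$ and Theorem~\ref{lamperti} gives \emph{unique} $\varphi\in\textrm{Aut}(\mathcal{A})$ and $f\in\mathcal{U}(L^\infty(\mathcal{A}))$ with $v=m_fu_\varphi$.

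Next I would use that $v\in CV_p(G,\sigma)=\lambda_p^\sigma(G)''$, so by Proposition~\ref{commute} the operator $v$ commutes with $\rho_p^{\overline{\sigma}}(y)$ for every $y\in G$. Recalling from the discussion preceding the theorem that $\rho_p^{\overline{\sigma}}(y)=m_{\sigma_y}u_{r_{y^{-1}}}$, I would expand $v\,\rho_p^{\overline{\sigma}}(y)$ and $\rho_p^{\overline{\sigma}}(y)\,v$ using the elementary relations $u_\varphi u_\psi=u_{\varphi\circ\psi}$ and the intertwining identity $u_\varphi m_h=m_{\varphi(h)}u_\varphi$ (with $\varphi(h)$ the induced action of $\varphi$ on $L^\infty(\mathcal{A})$), and then invoke the uniqueness part of Theorem~\ref{lamperti}. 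Comparing the $\textrm{Aut}(\mathcal{A})$-components of the two sides forces $\varphi\circ r_{y^{-1}}=r_{y^{-1}}\circ\varphi$ for all $y\in G$, i.e.\ $\varphi$ centralizes the right-translation subgroup $\{r_z:z\in G\}$ of $\textrm{Aut}(\mathcal{A})$.

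The heart of the argument — and the step I expect to be the main obstacle — is to show that such a $\varphi$ must itself be a left translation $\varphi_g$ for a \emph{unique} $g\in G$ (here $\varphi_g$ is the Boolean automorphism induced by $x\mapsto gx$); this is where one transcribes, with the necessary modifications, the corresponding computation of Gardella and Thiel in \cite[Section~4]{gardella2018isomorphisms}, and it rests on the faithfulness and essential transitivity of the $G$-action on $(\mathcal{A},\overline{\mu})$ rather than on formal manipulation. Granting this, the proof finishes cleanly: since $\lambda_p^\sigma(g)=m_{h_g}u_{\varphi_g}$ with $h_g(x)=\sigma(g,g^{-1}x)$ of modulus $1$, the operator $w:=v\,\lambda_p^\sigma(g)^{-1}$ lies in $CV_p(G,\sigma)$, is a surjective isometry, and has trivial $\textrm{Aut}(\mathcal{A})$-part, so $w=m_h$ for some $h\in\mathcal{U}(L^\infty(\mathcal{A}))$. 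Forcing $m_h$ to commute with each $\rho_p^{\overline{\sigma}}(y)=m_{\sigma_y}u_{r_{y^{-1}}}$ yields $\varphi_{r_{y^{-1}}}(h)=h$ for all $y$, so $h$ is right-translation-invariant; since the right-translation action of $G$ on $(\mathcal{A},\overline{\mu})$ is ergodic, $h$ is a.e.\ constant, say $h=\gamma\in\mathbb{T}$, and $v=\gamma\lambda_p^\sigma(g)$. Uniqueness of the pair $(\gamma,g)$ follows by comparing first the $\textrm{Aut}(\mathcal{A})$-parts (the map $g\mapsto\varphi_g$ is injective) and then the multipliers.

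Finally, for the norm identity~\eqref{norm_iso} I would write $\gamma\lambda_p^\sigma(g)=m_{\gamma h_g}u_{\varphi_g}$ and $\beta\lambda_p^\sigma(h)=m_{\beta h_h}u_{\varphi_h}$ and apply the norm formula of Theorem~\ref{lamperti}, obtaining $\max\{\norm{\gamma h_g-\beta h_h}_\infty,\,2\delta_{\varphi_g,\varphi_h}\}$. If $g=h$ then $h_g=h_h$ has $\norm{h_g}_\infty=1$ and $\delta_{\varphi_g,\varphi_h}=0$, so this equals $|\gamma-\beta|$; if $g\neq h$ then injectivity of $g\mapsto\varphi_g$ gives $\delta_{\varphi_g,\varphi_h}=1$, while $\norm{\gamma h_g-\beta h_h}_\infty\le\norm{h_g}_\infty+\norm{h_h}_\infty=2$, so the maximum is $2$. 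In both cases the value is $\max\{|\gamma-\beta|,2\delta_{g,h}\}$, as claimed.
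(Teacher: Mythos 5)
Your proposal is correct and follows essentially the same route as the paper: Lamperti's decomposition $v=m_fu_\varphi$, commutation with $\rho_p^{\overline{\sigma}}(y)=m_{\sigma_y}u_{r_{y^{-1}}}$, the Gardella--Thiel lemma identifying Boolean automorphisms commuting with all right translations as left translations, and the Lamperti norm formula for \eqref{norm_iso}. The only divergence is how the multiplier is pinned down: the paper solves the resulting functional equation $h(x)\overline{\sigma(g^{-1}x,y)}=\overline{\sigma(x,y)}h(xy)$ directly via the cocycle identity (evaluating at $x=e$), whereas you factor out $\lambda_p^\sigma(g)$ and invoke ergodicity of the right-translation action on the measure algebra --- both work, and yours sidesteps the (harmless) pointwise evaluation of an $L^\infty$-class at the identity.
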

 \begin{proof}
 	Let $v \in \mathcal{U}(CV_{p}(G,\sigma))$. Then $v \in \textrm{Isom}(L^{p}(\mathcal{A}))$, and by Lamperti's Theorem there exists a unique $h \in \mathcal{U}(L^{\infty}(\mathcal{A}))$ and  $\varphi \in$ Aut($\mathcal{A}$) such that $v = m_{h}u_{\varphi}$. Since every $v \in \mathcal{U}(CV_{p}(G,\sigma))$ commutes with $\rho_p^{\overline{\sigma}}(y)$ for every $y \in G$, we have that $m_{\sigma_{y}}u_{r_{y^{-1}}}m_{h}u_{\varphi} = m_{h}u_{\varphi}m_{\sigma_{y}}u_{r_{y^{-1}}}$. By \cite[Lemma 3.3 \& Lemma 3.4]{gardella2018isomorphisms} we can rewrite the equality to $$
 	m_{\sigma_{y}(r_{y^{-1}} \circ h)}u_{r_{y^{-1}} \circ \varphi} =  m_{h(\varphi \circ \sigma_{y})}u_{\varphi \circ r_{y^{-1}}}.
 	$$
 	
 	This implies that $r_{y^{-1}} \circ \varphi = \varphi \circ r_{y^{-1}}$ and $\sigma_{y}(r_{y^{-1}} \circ h) = h(\varphi \circ \sigma_{y})$. From the first equality, it follows by \cite[Lemma 4.8]{gardella2018isomorphisms} that there exists a unique $g_v \in G$ such that $\varphi = l_{g_v}$. We then have $h(l_{g_v} \circ \sigma_y) = \sigma_y(r_{y^{-1}} \circ h)$. This implies that $h(x)\overline{\sigma(g_v^{-1}x,y)} = \overline{\sigma(x,y)}h(xy)$ for all $x \in G$. Set $x=e$, then we deduce that 
 	$$
 	h(y) = h(e)\overline{\sigma(g_v^{-1},y)} = h(e)\overline{\sigma(g_v,g_v^{-1})}\sigma(g_v,g_v^{-1}y)\,.
 	$$
 	Now set $\gamma_{v} =h(e)\overline{\sigma(g_v^{-1},g_v)}$ and note that $\gamma_{v} \in \mathbb{T}$. Then given $\xi\in L^p(G)$ we have that $v(\xi)(x)  = \gamma_{v}\sigma(g_v,g_v^{-1}x)\xi(g_v^{-1}x) = \gamma_{v}\lambda_p^\sigma(g_v)(\xi)(x)$ for every $x\in G$, as desired. Finally, (\ref{norm_iso}) follows from \cite[Theorem 3.7]{gardella2018isomorphisms}.
 \end{proof}
 
 Given a unital Banach algebra $A$, let $\mathcal{U}(A)_0$ denote the connected component of $\mathcal{U}(A)$ in the norm topology that contains the unit of $A$. This is then a normal subgroup of $\mathcal{U}(A)$ and we write 
 $$
 \pi_0(\mathcal{U}(A)) = \mathcal{U}(A) /  \mathcal{U}(A)_0
 $$
 for the quotient.
 
 \begin{proposition}
 	Let $G$ be a locally compact group and let $\sigma$ be a continuous 2-cocycle. For $p\in[1,\infty)\setminus{\{2\}}$, there is a natural group isomorphism $G_\sigma \cong \mathcal{U}(CV_{p}(G,\sigma))$ and $G \cong \pi_{0}(\mathcal{U}(CV_{p}(G,\sigma)))$ (as discrete groups)  given by the maps $(\gamma,y) \mapsto \gamma \lambda_p^\sigma(y)$ and $y \mapsto [\lambda_p^\sigma(y)]$ respectively.
 \end{proposition}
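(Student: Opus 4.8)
The plan is to construct both isomorphisms explicitly, with Theorem \ref{c} supplying the only nontrivial input. Define $\Phi\colon G_\sigma\to\mathcal{U}(CV_p(G,\sigma))$ by $\Phi(\gamma,y)=\gamma\lambda_p^\sigma(y)$. First I would check that $\Phi$ takes values in $\mathcal{U}(CV_p(G,\sigma))$. Each $\lambda_p^\sigma(y)$ lies in $CV_p(G,\sigma)=\lambda_p^\sigma(G)''$ because $S\subseteq S''$ for any set of operators $S$, and it is a surjective isometry of $L^p(G)$, being left translation by $y$ (which preserves left Haar measure) followed by multiplication by the unimodular function $x\mapsto\sigma(y,y^{-1}x)$; hence $\gamma\lambda_p^\sigma(y)$ is an isometry for $\gamma\in\mathbb{T}$. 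Its inverse is $\overline{\gamma\sigma(y,y^{-1})}\,\lambda_p^\sigma(y^{-1})$, which is again in $CV_p(G,\sigma)$ and again an isometry; here one uses the identities $\lambda_p^\sigma(y)\lambda_p^\sigma(y^{-1})=\sigma(y,y^{-1})I=\lambda_p^\sigma(y^{-1})\lambda_p^\sigma(y)$ and $\sigma(y,y^{-1})=\sigma(y^{-1},y)$, both immediate consequences of Definition \ref{cocycle}. Thus $\gamma\lambda_p^\sigma(y)\in\mathcal{U}(CV_p(G,\sigma))$.

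Next I would verify that $\Phi$ is a group homomorphism. The key computation is the product formula $\lambda_p^\sigma(y_1)\lambda_p^\sigma(y_2)=\sigma(y_1,y_2)\lambda_p^\sigma(y_1y_2)$, obtained by writing out both operators on $\xi\in L^p(G)$ and applying the cocycle identity~(1) of Definition \ref{cocycle} with $(x_1,x_2,x_3)=(y_1,y_2,y_2^{-1}y_1^{-1}x)$. Then
\[
\Phi(\gamma_1,y_1)\Phi(\gamma_2,y_2)=\gamma_1\gamma_2\sigma(y_1,y_2)\lambda_p^\sigma(y_1y_2)=\Phi\big((\gamma_1,y_1)(\gamma_2,y_2)\big),
\]
which is exactly the Mackey-group law. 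Injectivity of $\Phi$ is the uniqueness clause of Theorem \ref{c} (equivalently, $\Phi(\gamma,y)=\Phi(\beta,h)$ forces $\max\{|\gamma-\beta|,2\delta_{y,h}\}=0$ by \eqref{norm_iso}), and surjectivity is precisely the existence clause of Theorem \ref{c}. Hence $\Phi$ is a group isomorphism $G_\sigma\cong\mathcal{U}(CV_p(G,\sigma))$ implemented by $(\gamma,y)\mapsto\gamma\lambda_p^\sigma(y)$.

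For the second isomorphism I would use the decomposition $\mathcal{U}(CV_p(G,\sigma))=\bigsqcup_{g\in G}\mathbb{T}\lambda_p^\sigma(g)$, disjoint by injectivity of $\Phi$. Each piece $\mathbb{T}\lambda_p^\sigma(g)$ is connected, being the image of the connected space $\mathbb{T}$ under the isometric map $\gamma\mapsto\gamma\lambda_p^\sigma(g)$; and by \eqref{norm_iso} two distinct pieces are at distance exactly $2$, so each piece is clopen — open because an open ball of radius $2$ about any of its points meets no other piece, and closed because its complement is a union of such pieces. Therefore $\mathbb{T}\lambda_p^\sigma(g)$ is a connected component of $\mathcal{U}(CV_p(G,\sigma))$, and in particular $\mathcal{U}(CV_p(G,\sigma))_0=\mathbb{T}\lambda_p^\sigma(e)=\mathbb{T}I$. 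Under $\Phi$ this corresponds to the central subgroup $\mathbb{T}\times\{e\}$ of $G_\sigma$, so $\Phi$ descends to an isomorphism $G\cong G_\sigma/(\mathbb{T}\times\{e\})\xrightarrow{\ \sim\ }\pi_0(\mathcal{U}(CV_p(G,\sigma)))$; concretely $[\gamma\lambda_p^\sigma(g)]$ depends only on $g$, the assignment $[\lambda_p^\sigma(y)]\mapsto y$ is a bijection, and it respects products since $[\lambda_p^\sigma(y)][\lambda_p^\sigma(h)]=[\sigma(y,h)\lambda_p^\sigma(yh)]=[\lambda_p^\sigma(yh)]$. This gives the claimed map $y\mapsto[\lambda_p^\sigma(y)]$.

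The only genuinely substantive ingredient is Theorem \ref{c}; everything else is bookkeeping. The point that needs care — and the one I regard as the main obstacle — is the identification of the connected component of the identity: it is the sharp norm formula \eqref{norm_iso}, producing a gap of exactly $2$ between the circles $\mathbb{T}\lambda_p^\sigma(g)$ for distinct $g$, that forces these circles to be clopen and hence rules out a larger $\mathcal{U}(CV_p(G,\sigma))_0$. I would also remark that the isomorphisms are isomorphisms of abstract (discrete) groups only: in the norm topology $\mathcal{U}(CV_p(G,\sigma))$ is a disjoint union of circles and so does not reflect the topology of $G$, which is why the statement is phrased ``as discrete groups''.
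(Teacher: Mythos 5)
Your proposal is correct and follows essentially the same route as the paper: both rely on Theorem \ref{c} for the bijection, the same cocycle computation $\lambda_p^\sigma(y_1)\lambda_p^\sigma(y_2)=\sigma(y_1,y_2)\lambda_p^\sigma(y_1y_2)$ for the homomorphism property, and the norm formula \eqref{norm_iso} to identify $\pi_0$. You merely orient the map in the opposite direction and fill in details the paper leaves implicit (that $\gamma\lambda_p^\sigma(y)$ is indeed an invertible isometry with inverse in $CV_p(G,\sigma)$, and the clopen-circles argument identifying $\mathcal{U}(CV_p(G,\sigma))_0=\mathbb{T}I$), both of which are accurate.
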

 \begin{proof}
 	Define $\Delta \colon \mathcal{U}(CV_{p}(G,\sigma)) \rightarrow G_{\sigma}$ by $\Delta(v) = (\gamma_{v},g_{v})$ where $\gamma_{v}\in \mathbb{T}$ and $g_v\in G$ are given in the proof of Theorem \ref{c}. The map is injective, and it is surjective as $\gamma\lambda_p^\sigma(y)$ is in $CV_p(G,\sigma)$ for all $\gamma \in \mathbb{T}$ and all $y \in G$. It remains to show that this is a homomorphism. Let $\gamma_{1}, \gamma_{2} \in \mathbb{T}$, and $y_{1},y_{2} \in G  $, then
 	\begin{align*}
 		(\gamma_{1}\lambda_p^\sigma(y_{1}) \circ \gamma_{2}\lambda_p^\sigma(y_{2}))(\xi)(x) &= (\gamma_{1}\lambda_p^\sigma(y_{1}))(\gamma_{2}\lambda_p^\sigma(y_{2})(\xi))(x) \\
 		& = \gamma_1\sigma(y_1,y_1^{-1}x)(\gamma_{2}\lambda_p^\sigma(y_{2})(\xi))(y_1^{-1}x)\\
 		& = \gamma_1\gamma_2\sigma(y_1,y_1^{-1}x)\sigma(y_2,y_2^{-1}y_1^{-1}x)\xi(y_2^{-1}y_1^{-1}x) \\
 		& =\gamma_1\gamma_2 \sigma(y_1,y_2)\sigma(y_1y_2,(y_1y_2)^{-1}x)\xi(y_2^{-1}y_1^{-1}x) \\
 		& = \gamma_1\gamma_2\sigma(y_1,y_2) \lambda_p^\sigma(y_1y_2)(\xi)(x)\,,
 	\end{align*}
 	where we used (1) in Definition \ref{cocycle} with $x_1 = y_1$, $x_{2} = y_{2}$ and $x_3 = (y_{1}y_{2})^{-1}x$. Thus, 
 	$$
 	\Delta(\gamma_{1}\lambda_p^\sigma(y_{1}) \circ \gamma_{2}\lambda_p^\sigma(y_{2})) = (\gamma_{1}\gamma_{2}\sigma(y_{1},y_{2}),y_{1}y_{2}) = (\gamma_{1},y_{1})(\gamma_{2},y_{2}) = \Delta(\gamma_{1}\lambda_p^\sigma(y_{1}))\Delta(\gamma_{2}\lambda_p^\sigma(y_{2}))\,.
 	$$
 	Hence $\Delta$ is a homomorphism and the assertion follows. Finally, because of (\ref{norm_iso}) we have that $\Delta$ induces a group isomorphism between $G$ and $\pi_{0}(\mathcal{U}(CV_{p}(G,\sigma)))$.
 \end{proof}
 
 \begin{lemma}
 	\label{incl}
 	Let $G$ be a locally compact group and let $\sigma$ be a continuous 2-cocycle. Let $(f_j)_j$ be a contractive approximate identity in $L^{1}(G,\sigma)$. For $p\in(1,\infty)\setminus{\{2\}}$,  then the net $(\lambda^\sigma_{p}(f_{j}))_j$ converges weak$^{\ast}$ to 1 in $\mathcal{B}(L^{p}(G))$.
 \end{lemma}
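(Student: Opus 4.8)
The plan is to exhibit $\mathcal{B}(L^p(G))$ as a dual Banach space and test the net $(\lambda^\sigma_p(f_j))_j$ against a total subset of the predual. Since $p\in(1,\infty)$, $L^p(G)$ is reflexive, so $\mathcal{B}(L^p(G))$ is canonically the dual of the projective tensor product $L^p(G)\hat\otimes L^q(G)$ ($q$ the conjugate exponent), under the pairing $\langle T,\xi\otimes\eta\rangle=\eta(T\xi)$ for $\xi\in L^p(G)$, $\eta\in L^q(G)$. From the Minkowski estimate already obtained above, $\norm{\lambda^\sigma_p(f_j)}\le\norm{f_j}_1\le 1$, so the net is norm-bounded; it therefore suffices to prove $\langle\lambda^\sigma_p(f_j)\xi,\eta\rangle\to\langle\xi,\eta\rangle$ for $\xi,\eta$ running over $C_c(G)$, which is dense in both $L^p(G)$ and $L^q(G)$ and so gives a total set in the predual.

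So fix $\xi,\eta\in C_c(G)$. By Fubini — the relevant double integral being absolutely convergent, bounded by $\norm{f_j}_1\norm{\xi}_p\norm{\eta}_q$ via H\"older — one gets $\langle\lambda^\sigma_p(f_j)\xi,\eta\rangle=\int_G f_j(y)\,\Phi(y)\,dy$, where $\Phi(y):=\langle\lambda^\sigma_p(y)\xi,\eta\rangle=\int_G\sigma(y,y^{-1}x)\,\xi(y^{-1}x)\,\eta(x)\,dx$. I would then record three facts about $\Phi$: it is continuous (dominated convergence, the integrand being dominated by $\norm{\xi}_\infty|\eta|\in L^1(G)$); it is supported in the compact set $\mathrm{supp}(\eta)\cdot\mathrm{supp}(\xi)^{-1}$; and, using $\sigma(e,x)=1$, $\Phi(e)=\int_G\xi(x)\eta(x)\,dx=\langle\xi,\eta\rangle$. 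At this point the $2$-cocycle has been absorbed into a single function $\Phi\in C_c(G)$, and the lemma reduces to showing $\int_G f_j\,\Phi\,d\mu\to\Phi(e)$.

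This last step is the real issue, because a general contractive approximate identity need not have supports shrinking to $\{e\}$, so $\int_G f_j\Phi\,d\mu$ cannot be estimated directly. The device I would use is to convolve in an auxiliary bump, taking care to use the \emph{twisted} convolution $\ast_\sigma$ so that the defining property of $(f_j)$ can be invoked. Given $\varepsilon>0$, pick $g\in C_c(G)$ with $g\ge 0$, $\int_G g\,d\mu=1$ and $\mathrm{supp}(g)$ inside a small neighbourhood of $e$, and set $(g\star_\sigma\Phi)(y):=\int_G g(z)\,\sigma(y,z)\,\Phi(yz)\,dz$. A change of variables ($w=yz$, left invariance of Haar measure) together with Fubini give $\int_G f_j\,(g\star_\sigma\Phi)\,d\mu=\int_G(f_j\ast_\sigma g)\,\Phi\,d\mu$, and since $f_j\ast_\sigma g\to g$ in $L^1(G,\sigma)$ and $\Phi$ is bounded, the right-hand side converges to $\int_G g\Phi\,d\mu=(g\star_\sigma\Phi)(e)$. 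On the other hand, using $\int_G g\,d\mu=1$, one has $\norm{g\star_\sigma\Phi-\Phi}_\infty\le\sup_{z\in\mathrm{supp}(g)}\bigl(\norm{R_z\Phi-\Phi}_\infty+\sup_{y\in G}|\Phi(yz)|\,|\sigma(y,z)-1|\bigr)$, where $(R_z\Phi)(y)=\Phi(yz)$, and both terms tend to $0$ as $z\to e$: the first by right uniform continuity of $\Phi\in C_c(G)$, the second because $\Phi(yz)\neq 0$ forces $y$ into a fixed compact set on which $(y,z)\mapsto\sigma(y,z)$ is uniformly continuous, with $\sigma(y,e)=1$ — this is exactly where the compact support of $\Phi$ is used. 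Assembling the three estimates with the triangle inequality gives $\limsup_j\bigl|\int_G f_j\Phi\,d\mu-\Phi(e)\bigr|\le 2\varepsilon$, and letting $\varepsilon\to 0$ finishes the proof. I expect the bump-convolution step — in particular securing the uniformity of the $\sigma$-estimate from compactness of $\mathrm{supp}(\Phi)$, and being careful to use $\ast_\sigma$ rather than ordinary convolution — to be the main obstacle; identifying the correct predual of $\mathcal{B}(L^p(G))$ and checking the elementary regularity of $\Phi$ are routine.
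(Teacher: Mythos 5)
Your proposal is correct, and while it opens exactly as the paper does --- identifying $\mathcal{B}(L^p(G))$ with the dual of $L^p(G)\widehat{\otimes}L^q(G)$, invoking boundedness of the net ($\norm{\lambda_p^\sigma(f_j)}\le\norm{f_j}_1\le 1$), and testing against simple tensors --- the heart of your argument is genuinely different. The paper proves the stronger statement that $\norm{f_j\ast_\sigma\xi-\xi}_p\to 0$ for every $\xi\in L^p(G)$ (i.e.\ SOT convergence), by a Minkowski estimate that splits $G$ into a small neighbourhood $U$ of the identity and its complement; the complement term is bounded by $2\norm{\xi}_p\int_{G\setminus U}|f_j|$, so that argument tacitly relies on the mass of $f_j$ concentrating near $e$ --- a true but nontrivial property of contractive approximate identities that the paper does not justify. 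You instead collapse the pairing to the scalar statement $\int_G f_j\Phi\,d\mu\to\Phi(e)$ for a single function $\Phi\in C_c(G)$ (into which the cocycle has been absorbed), and prove it by the twisted bump-convolution identity $\int f_j\,(g\star_\sigma\Phi)=\int(f_j\ast_\sigma g)\Phi$ together with $f_j\ast_\sigma g\to g$ in $L^1$ and the uniform estimate $\norm{g\star_\sigma\Phi-\Phi}_\infty\to 0$; all steps check out, including the uniform control of $|\sigma(y,z)-1|$ via compactness of $\mathrm{supp}(\Phi)$. What your route buys is that it uses only the defining algebraic property of the approximate identity and so avoids the concentration issue entirely; what the paper's route buys is the stronger SOT conclusion, which is reusable elsewhere. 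Either is acceptable here, since for a bounded net SOT convergence implies weak$^*$ convergence against $L^p\widehat{\otimes}L^q$.
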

 \begin{proof}
 	The proof follows  \cite[Lemma 4.2]{gardella2018isomorphisms}. Let $q$ be the conjugate exponent of $p$. We can we identify $\mathcal{B}(L^{p}(G))$ with the dual of $L^{p}(G) \widehat{\otimes} L^{q}(G)$, via the dual pairing $(T,f \otimes g) = (T(f),g)$, where $T \in \mathcal{B}(L^{p}(G))$, $f \in L^p(G),g \in L^q(G)$, see  \cite[Section 2]{Daws_2019}. Now, since the net is bounded, it suffices to show that $(\lambda_p^\sigma(f_{j}), \omega) \rightarrow (1,\omega)$ where $\omega$ is a simple tensor in $L^{p}(G) \widehat{\otimes} L^{q}(G)$. Let $\xi \in L^{p}(G)$ and $\eta \in L^{q}(G)$. By assumption we have that $(f_j)_j$ is a contractive approximate identity, and  we claim that $\norm{f_j \ast_\sigma \xi - \xi}_p \rightarrow 0$. Indeed, using Minkowski's inequality  we have that
 	\begin{align*}
 		\norm{f_j \ast_\sigma \xi - \xi}_p & = \left( \int \left|  \int (\sigma(y,y^{-1}x)\xi(y^{-1}x)-\xi(x))f_j(y)\,dy\right|^p\,dx \right)^{1/p} \\
 		& \leq  \int \left( \int \left| (\sigma(y,y^{-1}x)\xi(y^{-1}x)-\xi(x))f_j(y)\right|^p\,dx\right)^{1/p} \,dy \\
 		& = \int \left( \int \left| (\sigma(y,y^{-1}x)\xi(y^{-1}x)-\xi(x))\right|^p\,dx\right)^{1/p}|f_j(y)| \,dy \,.
 	\end{align*}
 	Now given any $U\subseteq G$ we have that 
 	\begin{align*}
 		\norm{f_j \ast_\sigma \xi - \xi}_p & \leq \int_U \left( \int \left| (\sigma(y,y^{-1}x)\xi(y^{-1}x)-\xi(x))\right|^p\,dx\right)^{1/p}|f_j(y)| \,dy \\ 
 		& + \int_{G\setminus U} \left( \int \left| (\sigma(y,y^{-1}x)\xi(y^{-1}x)-\xi(x))\right|^p\,dx\right)^{1/p}|f_j(y)| \,dy \\
 		& \leq \int_U \left( \int \left| (\sigma(y,y^{-1}x)\xi(y^{-1}x)-\xi(x))\right|^p\,dx\right)^{1/p}|f_j(y)| \,dy  + 2\|\xi\|_p\int_{G\setminus U} |f_j(y)|\,dy\,.
 	\end{align*}
 	
 	But now taking $U$ a small neighborhood of the unit of $G$ and big enough $j$ we can make $\norm{f_j \ast_\sigma \xi - \xi}_p$ as small as we want. Thus,  $\norm{f_j \ast_\sigma \xi - \xi}_p \rightarrow 0$ as desired. 
 	
 	Then we have that  
 	$$
 	(\lambda^\sigma_p(f_{j}),\xi \otimes \eta) = (f_j \ast_\sigma \xi, \eta) \rightarrow (\xi,\eta) = (1,\xi \otimes \eta)
 	$$
 	and the statement follows.
 \end{proof}
 
 Let $A$ be a unital Banach algebra, we define the \emph{left multiplier algebra of $A$} as $$M_l(A) := \{S \in \mathcal{B}(A) \colon S(ab) = S(a)b \textrm{ for all } a,b \in A\}\,.$$
 
 Since $F^p_\lambda(G,\sigma)$ has an approximate identity and $F^p_\lambda(G,\sigma) \subseteq \mathcal{B}(L^{p}(G))$ is a non degenerate subalgebra, it follows from \cite[Theorem 4.1]{gardella2018extending} that $M_{l}(F^p_\lambda(G,\sigma))$ has a canonical isometric representation as a unital subalgebra in $\mathcal{B}(L^{p}(G))$.
 
 \begin{proposition}
 	\label{c-identify}
 	Let $G$ be a locally compact group and let $\sigma$ be a continuous 2-cocycle. For $p\in(1,\infty)\setminus{\{2\}}$, then there is a natural isometric inclusion
 	$$
 	M_{\lambda}^{p}(G,\sigma) \subseteq  M_{l}(F^p_\lambda(G,\sigma)) \subseteq CV_{p}(G,\sigma).
 	$$
 \end{proposition}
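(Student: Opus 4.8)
The plan is to identify $M_l(F^p_\lambda(G,\sigma))$ with its canonical isometric image in $\mathcal{B}(L^p(G))$ — which exists by \cite[Theorem 4.1]{gardella2018extending}, since, as noted just above, $F^p_\lambda(G,\sigma)$ is a nondegenerate subalgebra of $\mathcal{B}(L^p(G))$ with a contractive approximate identity $(\lambda^\sigma_p(f_j))_j$ arising from a contractive approximate identity $(f_j)_j$ of $L^1(G,\sigma)$ — and then to verify the two inclusions by realizing the relevant operators as weak$^\ast$ limits of $(\lambda^\sigma_p(f_j))_j$ and invoking weak$^\ast$-continuity of one-sided multiplication on $\mathcal{B}(L^p(G))$, with respect to the predual $L^p(G)\widehat\otimes L^q(G)$, $q$ the conjugate exponent of $p$ (cf.\ the proof of Lemma \ref{incl}). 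Under this identification a multiplier $S$ corresponds to the operator $\overline{S}\in\mathcal{B}(L^p(G))$ with $\overline{S}\,\lambda^\sigma_p(f)=S(\lambda^\sigma_p(f))$; since $\lambda^\sigma_p(f_j)\xi\to\xi$ for every $\xi\in L^p(G)$ (this is exactly the estimate $\norm{f_j\ast_\sigma\xi-\xi}_p\to 0$ established inside the proof of Lemma \ref{incl}), one gets $\overline{S}\xi=\lim_j S(\lambda^\sigma_p(f_j))\xi$, so $\overline{S}$ is the strong, hence weak$^\ast$, limit of the bounded net $(S(\lambda^\sigma_p(f_j)))_j$. This last fact is the one genuinely load-bearing input; everything else is bookkeeping.

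For the inclusion $M^p_\lambda(G,\sigma)\subseteq M_l(F^p_\lambda(G,\sigma))$, fix $\nu\in M(G,\sigma)$. Then $\nu\ast_\sigma f\in L^1(G,\sigma)$ for $f\in L^1(G,\sigma)$ and $\lambda^\sigma_p(\nu)\lambda^\sigma_p(f)=\lambda^\sigma_p(\nu\ast_\sigma f)$, so left multiplication by $\lambda^\sigma_p(\nu)$ maps the dense subalgebra $\lambda^\sigma_p(L^1(G,\sigma))$ of $F^p_\lambda(G,\sigma)$ into itself; by associativity of $\ast_\sigma$ (verified above for $M(G,\sigma)$) it satisfies the left-multiplier identity there, and being bounded it extends to $S_\nu\in M_l(F^p_\lambda(G,\sigma))$. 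By the description of the canonical representation together with Lemma \ref{incl},
$$
\overline{S_\nu}=\lim_j S_\nu(\lambda^\sigma_p(f_j))=\lim_j \lambda^\sigma_p(\nu)\lambda^\sigma_p(f_j)=\lambda^\sigma_p(\nu),
$$
the limits being taken in the weak$^\ast$ topology and the last equality using weak$^\ast$-continuity of $T\mapsto\lambda^\sigma_p(\nu)T$. Hence the canonical representation restricts to the identity on $\lambda^\sigma_p(M(G,\sigma))$, and since it is isometric, taking norm closures yields $M^p_\lambda(G,\sigma)\subseteq M_l(F^p_\lambda(G,\sigma))$ isometrically.

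For the inclusion $M_l(F^p_\lambda(G,\sigma))\subseteq CV_p(G,\sigma)$, let $S\in M_l(F^p_\lambda(G,\sigma))$ and $R\in\lambda^\sigma_p(G)'$. Integrating shows that $R$ commutes with $\lambda^\sigma_p(f)$ for every $f\in L^1(G,\sigma)$, hence with all of $F^p_\lambda(G,\sigma)$, and in particular with $S(\lambda^\sigma_p(f_j))$ for every $j$. Since $T\mapsto TR$ and $T\mapsto RT$ are weak$^\ast$-continuous on $\mathcal{B}(L^p(G))$ and $\overline{S}$ is the weak$^\ast$ limit of $(S(\lambda^\sigma_p(f_j)))_j$, we obtain
$$
\overline{S}\,R=\lim_j S(\lambda^\sigma_p(f_j))\,R=\lim_j R\,S(\lambda^\sigma_p(f_j))=R\,\overline{S}.
$$
As $R\in\lambda^\sigma_p(G)'$ was arbitrary, $\overline{S}\in\lambda^\sigma_p(G)''=CV_p(G,\sigma)$ (a commutant being automatically weak$^\ast$-closed, so no closure issue arises), which completes the chain of inclusions. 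The only subtle step, as remarked, is pinning down the canonical image of a multiplier as the weak$^\ast$ limit of $(S(\lambda^\sigma_p(f_j)))_j$; once that is in place both inclusions, and the fact that the first is isometric, are immediate.
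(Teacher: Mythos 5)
Your proof is correct and follows essentially the same route as the paper's: the first inclusion via $\lambda^\sigma_p(\nu)\lambda^\sigma_p(f)=\lambda^\sigma_p(\nu\ast_\sigma f)$ on the dense subalgebra, and the second by realizing a left multiplier as the weak$^\ast$ limit of $S(\lambda^\sigma_p(f_j))\in F^p_\lambda(G,\sigma)$ and using that the bicommutant is weak$^\ast$-closed. You are somewhat more explicit than the paper about identifying a multiplier with its canonical image in $\mathcal{B}(L^p(G))$ and about why the first inclusion is isometric, but the underlying argument is the same.
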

 
 \begin{proof}
 	We will first show the first inclusion. It suffices to show that $\lambda_p^\sigma(M(G,\sigma)) \subseteq M_{l}(F^p_\lambda(G,\sigma))$ since the latter is closed by the operator norm. Let $\mu \in M(G,\sigma)$ and let $a \in F^p_\lambda(G,\sigma)$. Then there exists a net 
 	$(f_{j})$ in $L^{1}(G,\sigma)$ with $(\lambda^\sigma_p(f_j))$ converging to $a$ in the operator norm. We have that $\mu \ast_\sigma f_{j} \in L^{1}(G)$, and it follows from this that $\lambda_p^\sigma(\mu \ast_\sigma f_{j}) \in  F^p_\lambda(G,\sigma)$. Thus $\lambda_p^\sigma(\mu \ast_\sigma f_{j}) = \lambda_p^\sigma(\mu) \lambda_p^\sigma(f_{j}) \rightarrow \lambda_p^\sigma(\mu)a$ in the operator norm, and the inclusion follows.
 	
 	We will then show that $M_{l}(F^p_\lambda(G,\sigma)) \subseteq CV_{p}(G,\sigma)$. Let $S \in M_{l}(F^p_\lambda(G,\sigma))$, and let $(f_{j})_{j}$ be a contractive approximate identity in $L^{1}(G,\sigma)$. Note that for every $j$, $S\lambda_p^\sigma(f_{j}) \in F^p_\lambda(G,\sigma)$. By Lemma \ref{incl}  $S\lambda_p^\sigma(f_{j}) \xrightarrow{w^{\ast}} S$ and thus $M_{l}(F^p_\lambda(G))$ is in the weak$^{\ast}$ closure of $F^p_\lambda(G)$. Since $\lambda_p^\sigma(L^{1}(G,\sigma)) \subseteq CV_{p}(G,\sigma)$ and the fact that the commutant algebras are weak operator closed, and hence $w^{\ast}$-closed, we deduce that $M_{l}(F^p_\lambda(G,\sigma)) \subseteq CV_{p}(G,\sigma)$.
 \end{proof}
 
 \begin{proposition}
 	
 	\label{c-homo}
 	Let $G$ be a  locally compact group and let $\sigma$ be a continuous 2-cocycle. For $p\in(1,\infty)\setminus{\{2\}}$, let $A$ be a normed closed subalgebra of $\mathcal{B}(L^{p}(G))$ such that $M_{\lambda}^{p}(G,\sigma) \subseteq A \subseteq CV_{p}(G,\sigma)$. Then there are natural group isomorphisms
 	$$
 	\Delta \colon G_\sigma \xrightarrow{\cong} \mathcal{U}(A) \quad \text{and}\quad\Delta^{\prime} \colon G \xrightarrow{\cong} \pi_0(\mathcal{U}(A))
 	$$
 	given by $(\gamma,y) \mapsto \gamma\lambda_p^\sigma(y)$ and  $y \mapsto [\lambda_{p}(y)]$, respectively.
 \end{proposition}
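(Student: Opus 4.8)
The plan is to deduce the statement from the two facts already established for the largest algebra $CV_p(G,\sigma)$ --- Theorem \ref{c} and the Proposition following it identifying $G_\sigma$ with $\mathcal{U}(CV_p(G,\sigma))$ --- by showing that replacing $CV_p(G,\sigma)$ by the intermediate algebra $A$ does not change the group of invertible isometries. The first point to record is that $A$ is unital with unit the identity operator on $L^p(G)$: since $M(G,\sigma)$ has unit $\delta_e$ and $\lambda_p^\sigma(\delta_e)(\xi)(x)=\sigma(e,e^{-1}x)\xi(x)=\xi(x)$ by part (2) of Definition \ref{cocycle}, we have $1=\lambda_p^\sigma(\delta_e)\in M_\lambda^p(G,\sigma)\subseteq A\subseteq CV_p(G,\sigma)$. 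Hence all three algebras are unital subalgebras of $\mathcal{B}(L^p(G))$ sharing the same unit, so ``invertible with inverse of norm $1$'' means the same thing inside each of them, and in particular $\mathcal{U}(A)\subseteq\mathcal{U}(CV_p(G,\sigma))$.

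Next I would prove the reverse inclusion, so that $\mathcal{U}(A)=\mathcal{U}(CV_p(G,\sigma))$. Given $v\in\mathcal{U}(CV_p(G,\sigma))$, Theorem \ref{c} yields unique $\gamma\in\mathbb{T}$, $g\in G$ with $v=\gamma\lambda_p^\sigma(g)=\lambda_p^\sigma(\gamma\delta_g)$; since $\gamma\delta_g\in M(G,\sigma)$ this shows $v\in\lambda_p^\sigma(M(G,\sigma))\subseteq M_\lambda^p(G,\sigma)\subseteq A$. Its inverse $\overline{\gamma\,\sigma(g^{-1},g)}\,\lambda_p^\sigma(g^{-1})$ --- the image of $(\gamma,g)^{-1}$ under the Mackey-group identification --- is again of the form $\beta\lambda_p^\sigma(h)$, hence also lies in $A$; and since $\lambda_p^\sigma(g)$ is a surjective isometry of $L^p(G)$ (left translation composed with multiplication by a $\mathbb{T}$-valued function), both $v$ and $v^{-1}$ have norm $1$. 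Therefore $v\in\mathcal{U}(A)$, and combining the two inclusions gives $\mathcal{U}(A)=\mathcal{U}(CV_p(G,\sigma))$ not merely abstractly but as the same subgroup of $\mathcal{B}(L^p(G))$, carrying the same operator-norm topology.

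With this identification, $\Delta$ is simply the corestriction to $\mathcal{U}(A)$ of the isomorphism $G_\sigma\xrightarrow{\cong}\mathcal{U}(CV_p(G,\sigma))$, $(\gamma,y)\mapsto\gamma\lambda_p^\sigma(y)$, from the Proposition following Theorem \ref{c}; and since $\mathcal{U}(A)$ and $\mathcal{U}(CV_p(G,\sigma))$ agree as topological groups their identity components agree, so $\pi_0(\mathcal{U}(A))=\pi_0(\mathcal{U}(CV_p(G,\sigma)))$ and $\Delta'$ is the induced corestriction of $y\mapsto[\lambda_p^\sigma(y)]$. One can also see the $\pi_0$ statement directly from \eqref{norm_iso}: the path $t\mapsto e^{it}\lambda_p^\sigma(g)$ is norm-continuous inside $\mathcal{U}(A)$, whereas $\norm{\lambda_p^\sigma(g)-\lambda_p^\sigma(h)}=2$ whenever $g\neq h$, so $\mathcal{U}(A)_0=\{\gamma\cdot 1:\gamma\in\mathbb{T}\}$ and the quotient map $\mathcal{U}(A)\to\pi_0(\mathcal{U}(A))$ is $\gamma\lambda_p^\sigma(g)\mapsto g$. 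The only step that genuinely requires care is the bookkeeping of the first paragraph --- making $A$ unital and making invertibility in $A$ literally coincide with invertibility in $CV_p(G,\sigma)$; once that is in place the proposition is a formal consequence of Theorem \ref{c} and the Proposition following it, with no new estimates needed.
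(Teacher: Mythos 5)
Your proposal is correct and takes essentially the same route as the paper: both arguments sandwich $\mathcal{U}(A)$ between $\mathcal{U}(M_{\lambda}^{p}(G,\sigma))$ and $\mathcal{U}(CV_{p}(G,\sigma))$ and invoke Theorem \ref{c} to see that every invertible isometry of $CV_p(G,\sigma)$ has the form $\gamma\lambda_p^\sigma(g)=\lambda_p^\sigma(\gamma\delta_g)$ and hence already lies in $M_\lambda^p(G,\sigma)$, after which $\Delta$ and $\Delta'$ are corestrictions of the isomorphisms already established. Your extra bookkeeping (unitality of $A$ via $\lambda_p^\sigma(\delta_e)=1$, and that the inverse $\overline{\gamma\sigma(g^{-1},g)}\,\lambda_p^\sigma(g^{-1})$ also lies in $A$) only makes explicit steps the paper leaves implicit.
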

 
 \begin{proof}
 	The map $\Delta\colon G_\sigma \xrightarrow{\cong} \mathcal{U}(CV_{p}(G,\sigma))$ is a group isomorphism by Lemma \ref{c}. It is enough to prove  that  $\mathcal{U}(M_{\lambda}^{p}(G,\sigma)) =  \mathcal{U}(CV_{p}(G,\sigma))$. That $\mathcal{U}(M_{\lambda}^{p}(G,\sigma)) \subseteqq  \mathcal{U}(CV_{p}(G,\sigma))$ is clear. On the other side, let $v$ be any element in $ \mathcal{U}(CV_{p}(G,\sigma))$, then by Lemma \ref{c} we get that $v = \gamma \lambda_p^\sigma(y)$ for some $\gamma \in \mathbb{T}$ and $y \in G$. But since $\lambda_p^\sigma(y) = \lambda_p^\sigma(\delta_{y})$ we get that $v \in M_{\lambda}^{p}(G,\sigma)$, and the statement follows.
 \end{proof}
 
 To recover the topology of the group, we need to briefly  mention some topological concepts. Let $E$ be a Banach space and let $(T_j)_j$ be a net in $\mathcal{B}(E)$. We say that $(T_j)_j$ converges to $T$ in the \emph{strong operator topology} (SOT) of $\mathcal{B}(E)$ if and only if $T_j(\xi)$ converges to $T(\xi)$ in the norm of $E$ for every $\xi\in E$. 
 
 Let $A$ be a unital Banach algebra. We  need to define the strict operator topology on $M_l(A)$ which is the restriction of the strong operator topology of $\mathcal{B}(A)$ to $M_l(A)$. In other words, for a net $(S_j)_j$ in $M_l(A)$, we have that $S_j \xrightarrow{str} S$ for some $S \in M_l(A)$ if and only if $S_j(a) \xrightarrow{\norm{\cdot}} S(a)$ for all $a \in A$. We let $\mathcal{U}(M_l(A))_{str}$ denote the group $\mathcal{U}(M_l(A))$ with the strict topology on $M_l(A)$ restricted to the invertible isometries, then  the  subgroup of the invertible isometries homotopic to the identity $\mathcal{U}(M_l(A))_0$ is a closed subgroup, and we denote by $\pi_0(\mathcal{U}(M_l(A)))_{str}$ for the group $\pi_0(\mathcal{U}(M_l(A)))$ endowed with the quotient topology.
 
 \begin{proposition}
 	\label{sigmaleft_mult}
 	Let $G$ be a locally compact group  and let $\sigma$ be a continuous 2-cocycle. For $p \in (1,\infty)\setminus\{2\}$,
 	there are natural isomorphisms as topological groups
 	$$
 	\Lambda \colon G_\sigma \rightarrow \mathcal{U}(M_{l}(F^p_\lambda(G,\sigma)))_{str} \quad\text{and}\quad \Lambda^{\prime} \colon G \rightarrow \pi_{0}(\mathcal{U}(M_{l}(F^p_\lambda(G,\sigma))))_{str}\,,
 	$$
 	given by $\Lambda(\gamma,x) = \gamma\lambda_p^\sigma(x)$ and $\Lambda^{\prime}(x) = [\lambda_{p}(x)]$, respectively.
 \end{proposition}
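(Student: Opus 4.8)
The plan is to apply Proposition~\ref{c-homo} with $A=M_l(F^p_\lambda(G,\sigma))$, which is permitted by Proposition~\ref{c-identify}. This already furnishes $\Lambda$ and $\Lambda'$ as isomorphisms of abstract groups, so the whole content of the statement is the identification of the topologies. It suffices to show that $\Lambda\colon \mathbb{T}\times G\to \mathcal{U}(M_l(F^p_\lambda(G,\sigma)))_{str}$ is a homeomorphism; the assertion for $\Lambda'$ will then follow by passing to quotients.

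\emph{Continuity of $\Lambda$.} By definition of the strict topology it is enough to prove that, for each fixed $a\in F^p_\lambda(G,\sigma)$, the map $(\gamma,x)\mapsto \gamma\lambda_p^\sigma(x)\,a$ is norm continuous into $\mathcal{B}(L^p(G))$. Since every $\gamma\lambda_p^\sigma(x)$ is a contraction and $F^p_\lambda(G,\sigma)=\overline{\lambda_p^\sigma(C_c(G))}$, a routine $3\varepsilon$ estimate reduces this to $a=\lambda_p^\sigma(f)$ with $f\in C_c(G)$. Then $\gamma\lambda_p^\sigma(x)\lambda_p^\sigma(f)=\gamma\,\lambda_p^\sigma(\delta_x\ast_\sigma f)$, and as $\lambda_p^\sigma\colon L^1(G,\sigma)\to\mathcal{B}(L^p(G))$ is a contraction we are reduced to the continuity of $(\gamma,x)\mapsto \gamma\,(\delta_x\ast_\sigma f)$ into $L^1(G)$. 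This follows from dominated convergence: for $x$ near a fixed point the functions $z\mapsto f(x^{-1}z)\sigma(x,x^{-1}z)$ are supported in a common compact set and converge pointwise, by continuity of $f$, of $\sigma$ and of the group operations; continuity in $\gamma$ is clear.

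\emph{Continuity of $\Lambda^{-1}$.} Assume $\gamma_j\lambda_p^\sigma(x_j)\to\gamma\lambda_p^\sigma(x)$ in the strict topology. Testing these contractions against vectors $a\xi$ with $a\in F^p_\lambda(G,\sigma)$ and $\xi\in L^p(G)$, which span a dense subspace of $L^p(G)$ since $F^p_\lambda(G,\sigma)$ acts nondegenerately, we upgrade the convergence to the strong operator topology of $\mathcal{B}(L^p(G))$. Suppose now $x_j\not\to x$; passing to a subnet we may assume that either $x_j\to x'\in G$ with $x'\neq x$, or $x_j$ eventually leaves every compact subset of $G$. Choosing $\xi\in C_c(G)$ with $\xi\geq 0$, supported in a small neighbourhood of $e$ and with $\xi(e)>0$, the function $\lambda_p^\sigma(x)\xi$ is supported near $x$ and does not vanish a.e.\ there, whereas $\gamma_j\lambda_p^\sigma(x_j)\xi$ is supported on $x_j\,\mathrm{supp}(\xi)$, which eventually misses a fixed neighbourhood of $x$; hence $\norm{\gamma_j\lambda_p^\sigma(x_j)\xi-\gamma\lambda_p^\sigma(x)\xi}_p$ is bounded away from $0$, contradicting the strong convergence. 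Therefore $x_j\to x$, and then, using that $x\mapsto\lambda_p^\sigma(x)$ is itself SOT continuous (the same dominated-convergence argument, now on $L^p(G)$), the strong convergence forces $\gamma_j\to\gamma$. Thus $\Lambda$ is a homeomorphism. I expect this second step — and in particular ruling out that $x_j$ escapes to infinity — to be the only genuinely nontrivial point, being where the geometry of $L^p$-spaces (through supports) is used rather than soft topological arguments.

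\emph{The statement for $\Lambda'$.} The norm formula~(\ref{norm_iso}) gives $\norm{\gamma\lambda_p^\sigma(g)-\beta\lambda_p^\sigma(h)}=\max\{|\gamma-\beta|,2\delta_{g,h}\}$, so in the operator-norm topology each fibre $\mathbb{T}\lambda_p^\sigma(g)$ of $\mathcal{U}(M_l(F^p_\lambda(G,\sigma)))=\{\gamma\lambda_p^\sigma(g):\gamma\in\mathbb{T},\,g\in G\}$ is open (a ball of radius $<2$ about any of its points stays inside it) and isometric to $\mathbb{T}$, hence connected. Thus $\mathcal{U}(M_l(F^p_\lambda(G,\sigma)))_0=\mathbb{T}\cdot 1=\Lambda(\mathbb{T}\times\{e\})$, and $\Lambda$ descends to the abstract group isomorphism $\Lambda'$ between $G_\sigma/(\mathbb{T}\times\{e\})\cong G$ and $\pi_0(\mathcal{U}(M_l(F^p_\lambda(G,\sigma))))$. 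Since $\Lambda$ is a homeomorphism and both sides carry the quotient topology by these corresponding subgroups, $\Lambda'$ is a homeomorphism as well.
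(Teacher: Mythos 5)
Your proof is correct and follows essentially the same route as the paper: reduce everything to showing that $\Lambda$ is a homeomorphism for the strict topology, prove continuity via the $L^1$-continuity of $x\mapsto\delta_x\ast_\sigma f$, and prove continuity of the inverse by a disjoint-supports lower bound in $L^p(G)$ (first for $x_j\to x$, then deducing $\gamma_j\to\gamma$). The only cosmetic difference is that you upgrade strict convergence to SOT convergence via nondegeneracy and test against a bump function, whereas the paper tests directly against the specific vector $\lambda_p^\sigma(\chi_V)\chi_V$; both hinge on the same support-disjointness estimate.
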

 
 \begin{proof}
 	We have the following commutative diagram:
 		$$\xymatrix {  {G_\sigma}\ar@{>}[rr]^\Lambda\ar@{>}[d]^{(\gamma,x)\mapsto x}& \qquad &  {\mathcal{U}(M_{l}(F^p_\lambda(G,\sigma)))_{str}}\ar@{>}[d]_{v \mapsto [v]}\\
 			{G}\ar@{>}[rr]^{\Lambda'} & \qquad&{\pi_{0}(\mathcal{U}(M_{l}((F^p_\lambda(G,\sigma)))_{str}} }\,.$$
 		By Proposition \ref{c-homo} and Proposition \ref{c-identify} it follows that $\Lambda$ and $\Lambda^{\prime}$ are group isomorphisms (as discrete groups). Since the downward maps in the diagram  are quotient maps it is suffices to show that $\Lambda$ is a homeomorphism. Let $(\gamma_{j}, x_{j})_j$  be a net in $G_\sigma$ converging to some element $(\gamma, x) \in G_\sigma$. Given $f \in L^{1}(G,\sigma)$ we have that $(\delta_{x_{j}} \ast_\sigma f) \xrightarrow{\norm{\cdot}_1} (\delta_x \ast_\sigma f)$ in $L^{1}(G,\sigma)$. Since $\lambda_p^\sigma$ is a contractive homomorphism, this implies that 
 		$\gamma_{j} \lambda_p^\sigma(x_{j})\lambda_p^\sigma(f) = \gamma_{j}\lambda_p^\sigma(\delta_{x_{j}} \ast_\sigma f) \xrightarrow{\norm{\cdot}} \gamma\lambda_p^\sigma(\delta_{x} \ast_\sigma f) = \gamma \lambda_p^\sigma(x)\lambda_p^\sigma(f)$ in $F^p_\lambda(G, \sigma)$ for every $f \in L^{1}(G,\sigma)$. Since $F_\lambda^p(G,\sigma)$ is the closure of $\lambda_p^\sigma(L^1(G,\sigma))$ and the net $\left(\gamma_{j}\lambda_p^\sigma(x_{j})\right)_{j}$ is bounded, we deduce that $\gamma_{j} \lambda_p^\sigma(x_{j})a \xrightarrow{\norm{\cdot}} \gamma \lambda_p^\sigma(x)a$ for all $a \in F^p_\lambda(G,\sigma)$, and so we have, by the definition of the strict topology, that $\gamma_{j} \lambda_p^\sigma(x_{j}) \xrightarrow{str} \gamma \lambda_p^\sigma(x)$ in $M_{l}(F^p_\lambda(G,\sigma))$. 
 		
 		Conversely assume that $\gamma_{j} \lambda_p^\sigma(x_{j}) \xrightarrow{str} \gamma \lambda_p^\sigma(x)$ in $M_{l}(F^p_\lambda(G,\sigma))$. We need to show that $(\gamma_{j}, x_{j}) \rightarrow (\gamma,x)$ in $G_\sigma$, which means we have to show that $\gamma_{j} \rightarrow \gamma$  in $\mathbb{T}$ and $x_{j} \rightarrow x$ in $G$. 
 		
 		We will first show that $x_{j} \rightarrow x$. Let $U$ be a neighborhood of the unit of $G$. Choose a neighborhood $V$ also containing the unit of $G$ such that $V^{2}V^{-2} \subseteq U$ and $\mu(V) < \infty$. This means that $\chi_{V} \in L^{p}(G)$ for every $p \in [1,\infty)$. Note that this implies that $\lambda_p^\sigma(\chi_V) \in F^p_\lambda(G,\sigma)$ and it follows by the definition of the strict topology that
 		$\gamma_{j} \lambda_p^\sigma(x_{j})\lambda_p^\sigma(\chi_{V}) \xrightarrow{\norm{\cdot}} \gamma \lambda_p^\sigma(x)\lambda_p^\sigma(\chi_{V})$ in $F_\lambda^p(G,\sigma)$ 
 		and so we have $\gamma_{j} \lambda_p^\sigma(x_{j})\lambda_p^\sigma(\chi_{V})\chi_{V} \xrightarrow{\norm{\cdot}} \gamma \lambda_p^\sigma(x)\lambda_p^\sigma(\chi_{V})\chi_{V}$ in $L^{p}(G)$. Observe that 
 		$$
 		f(z):=\lambda_p^\sigma(\chi_{V})(\chi_{V})(z) = \int_G \sigma(t,t^{-1}z)\chi_V(t)\chi_{V}(t^{-1}z)dt 
 		\,,$$
 		defines a function in $L^p(G)$ with  $\textrm{supp}(f) \subseteq V^{2}$. 
 		
 		If the net converges to $x$, then every open neighborhood of $x$ will contain the net eventually. We will assume that this is not the case, and show that this leads to a contradiction. Since $U$ is an arbitrary neighborhood of the unit we assume that $x_{j} \notin xU$ for all $j$, then $x_{j} \notin xV^2V^{-2}$, and thus $x_{j}V^{2} \cap xV^{2} = Ø$, which implies that $l_{x_j}(f)$ and $l_x(f)$ have disjoint support, which again implies that $\sigma(x_j,x_j^{-1}z)f(x_{j}^{-1}z)$ and $\sigma(x,x^{-1}z)f(x^{-1}z)$ have disjoint supports. Using these facts we have that
 		$$
 		\norm{\gamma_{j}\sigma(x_j,x_j^{-1}z)f(x_{j}^{-1}z) - \gamma \sigma(x,x^{-1}z)f(x^{-1}z)}^p_{p}  =  \int_{x_{j}V^{2}}\left|f(x_{j}^{-1}z)\right|^{p}dz + \int_{sV^{2}}\left|f(x^{-1}z)\right|^{p}dz   > 0.
 		$$
 		for all $j$. But we have already shown that 
 		$$\gamma_{j}\sigma(x_j,x_j^{-1}z)f(x_{j}^{-1}z) =\gamma_{j} \lambda_p^\sigma(x_{j})\lambda_p^\sigma(\chi_{V})\chi_{V}(z) \xrightarrow{\norm{\cdot}} \gamma \lambda_p^\sigma(x)\lambda_p^\sigma(\chi_{V})\chi_{V}(z) = \gamma \sigma(x,x^{-1}z)f(x^{-1}z),
 		$$
 		and so we have a contradiction. Thus, for every neighborhood $U$ containing the unit, $x_{j}$ is eventually in $xU$ for large enough $j$, and hence $x_{j} \rightarrow x$. 
 		
 		Finally it remains to show that $\gamma_{j} \rightarrow \gamma$. We have that 
 		
 		\begin{align*}\norm{\gamma_i \lambda_p^\sigma(x)\lambda_p^\sigma(\chi_V)-\gamma \lambda_p^\sigma(x)\lambda_p^\sigma(\chi_V)} & \leq \norm{\gamma_i \lambda_p^\sigma(x)\lambda_p^\sigma(\chi_V)-\gamma_i \lambda_p^\sigma(x_i)\lambda_p^\sigma(\chi_V)} \\ &+\norm{\gamma_i \lambda_p^\sigma(x_i)\lambda_p^\sigma(\chi_V)-\gamma \lambda_p^\sigma(x)\lambda_p^\sigma(\chi_V)}\,,\end{align*}
 		but then since both terms in the right hand side go to $0$ so does the left hand side. Then $\norm{\gamma_i \lambda_p^\sigma(x)\lambda_p^\sigma(\chi_V)-\gamma \lambda_p^\sigma(x)\lambda_p^\sigma(\chi_V)}=|\gamma_i-\gamma|\norm{\lambda_p^\sigma(x)\lambda_p^\sigma(\chi_V)}\to 0$, so $\gamma_i\to \gamma$, as desired. 
 	\end{proof}
 	
 	\begin{theorem}
 		\label{main-group}
 		Let $G$ and $H$ be two locally compact groups and let $\sigma$ be a continuous 2-cocycle for $G$ and $\kappa$  be a continuous 2-cocycle for $H$. For $p \in (1,\infty)\setminus\{2\}$, then there exists an isometric isomorphism between  $F^p_\lambda(G,\sigma)$ and $F^p_\lambda(H,\kappa)$  if and only if $G \cong H$ as topological groups and $\sigma \sim \kappa$. 
 	\end{theorem}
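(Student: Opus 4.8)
The plan is to prove the two implications of the ``iff'' separately, reading ``$\sigma\sim\kappa$'' as: there is a topological isomorphism $\theta\colon G\to H$ making $\sigma$ cohomologous to $\theta^{*}\kappa:=\kappa\circ(\theta\times\theta)$ on $G$. For the \emph{if} direction I would produce the isometric isomorphism by hand. Given such a $\theta$ and a continuous $b\colon G\to\mathbb{T}$ implementing the cohomology, $\sigma(x,y)\overline{\theta^{*}\kappa(x,y)}=b(x)b(y)\overline{b(xy)}$, I would first check — using this identity with the pair $(y,y^{-1}x)$ — that $f\mapsto bf$ is an isometric isomorphism $L^{1}(G,\sigma)\to L^{1}(G,\theta^{*}\kappa)$ and that $m_{b}\lambda_{p}^{\sigma}(y)m_{\overline b}=b(y)\lambda_{p}^{\theta^{*}\kappa}(y)$ on $L^{p}(G)$, so that conjugation by the surjective isometry $m_{b}$ restricts to an isometric isomorphism $F_{\lambda}^{p}(G,\sigma)\cong F_{\lambda}^{p}(G,\theta^{*}\kappa)$; then, since $\theta$ pushes left Haar measure to a scalar multiple of left Haar measure, conjugation by the induced surjective isometry between the $L^{p}$-spaces gives $F_{\lambda}^{p}(G,\theta^{*}\kappa)\cong F_{\lambda}^{p}(H,\kappa)$. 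Composing finishes this direction (which, incidentally, does not use $p\neq2$).

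For the \emph{only if} direction, let $\Phi\colon F_{\lambda}^{p}(G,\sigma)\to F_{\lambda}^{p}(H,\kappa)$ be an isometric isomorphism. Since both algebras have contractive approximate identities, $S\mapsto\Phi S\Phi^{-1}$ is a unital isometric isomorphism $M_{l}(F_{\lambda}^{p}(G,\sigma))\to M_{l}(F_{\lambda}^{p}(H,\kappa))$, and it is a homeomorphism for the strict topologies because $\Phi$ is a homeomorphism. Restricting to the invertible isometries and passing to $\pi_{0}$, and invoking Proposition \ref{sigmaleft_mult} (with its commutative square) on both sides, I obtain isomorphisms of topological groups $\Psi\colon G_{\sigma}\xrightarrow{\cong}H_{\kappa}$ and $\theta\colon G\xrightarrow{\cong}H$ compatible with the projections $G_{\sigma}\to G$ and $H_{\kappa}\to H$. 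This already yields $G\cong H$ as topological groups.

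To get $\sigma\sim\theta^{*}\kappa$ I would then argue at the level of central extensions. Under the identification of Proposition \ref{sigmaleft_mult}, the central $\mathbb{T}\times\{e\}\subseteq G_{\sigma}$ corresponds to the scalars $\gamma\cdot 1$ in $M_{l}(F_{\lambda}^{p}(G,\sigma))$ (as $\Lambda(\gamma,e)=\gamma\lambda_{p}^{\sigma}(e)=\gamma\cdot1$), and likewise on the $H$ side; since $S\mapsto\Phi S\Phi^{-1}$ is unital it fixes scalars, so $\Psi$ restricts to the identity of $\mathbb{T}$. Hence $\Psi$ is an isomorphism, over $\theta$, of the topological central extension $1\to\mathbb{T}\to G_{\sigma}\to G\to1$ onto $1\to\mathbb{T}\to H_{\kappa}\to H\to1$. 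Both Mackey groups are the product spaces $\mathbb{T}\times G$, $\mathbb{T}\times H$ with continuous sections $x\mapsto(1,x)$ whose associated cocycles are $\sigma$ and $\kappa$, so pulling back along $\theta$ shows $G_{\sigma}$ and $\theta^{*}(H_{\kappa})$ are continuously isomorphic central extensions of $G$; their section cocycles $\sigma$ and $\theta^{*}\kappa$ must therefore differ by a continuous coboundary, i.e.\ $\sigma\sim\kappa$.

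The genuinely hard content is not in this argument but upstream: Lamperti's Theorem \ref{lamperti} together with Theorem \ref{c} and Propositions \ref{c-identify}--\ref{sigmaleft_mult}, which pin down $\mathcal{U}(M_{l}(F_{\lambda}^{p}(\,\cdot\,,\,\cdot\,)))_{str}$ as the Mackey group and its $\pi_{0}$ as the underlying group, as \emph{topological} groups. Granting those, the one step in the present proof that needs real care is the last paragraph — verifying that $\Psi$ is an isomorphism of \emph{topological} central extensions (covering $\theta$, equal to $\mathrm{id}$ on $\mathbb{T}$, compatible with continuous sections) so that the classical dictionary between continuous $2$-cohomology and central extensions actually produces a \emph{continuous} coboundary, and not merely some weaker equivalence.
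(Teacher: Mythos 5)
Your proposal is correct and follows essentially the same route as the paper: the only-if direction rests on the identification of $\mathcal{U}(M_{l}(F^p_\lambda(\cdot,\cdot)))_{str}$ with the Mackey group and of its $\pi_0$ with the underlying group (Proposition \ref{sigmaleft_mult}), and your central-extension packaging of the cocycle comparison is a reformulation of the paper's explicit computation of $\varphi(1,g_1g_2)$, including the observation that unitality and linearity force the isomorphism to be the identity on the central $\mathbb{T}$. In the if-direction your spatial identity $m_{b}\lambda_p^{\sigma}(f)m_{\overline{b}}=\lambda_p^{\theta^{*}\kappa}(bf)$ in fact supplies the justification, left implicit in the paper, for why $f\mapsto bf$ is isometric for the reduced operator norms and not merely for the $L^1$-norms.
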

 	\begin{proof}
 		Assume that there is an isometric isomorphism
 		$F^p_\lambda(G,\sigma) \rightarrow F^p_\lambda(H,\kappa)$,
 		this induces an isometric isomorphism 
 		$$\Phi \colon M_{l}(F^p_\lambda(G, \sigma)) \rightarrow M_{l}(F^p_\lambda(H, \kappa))\,.$$
 		Note that $\Phi$ and its inverse are norm continuous and thus strictly continuous, then it follows that $\Phi$ is a homeomorphism with respect to the strict topology between  $M_{l}(F^p_\lambda(G,\sigma))$ and $M_{l}(F^p_\lambda(H, \kappa))$. We will for the rest of the proof write $A=M_{l}(F^p_\lambda(G,\sigma))$ and $B=M_{l}(F^p_\lambda(H,\kappa))$. Then $\Phi$ induces an isomorphism of the topological groups
 		\begin{align*}
 			\phi \colon \mathcal{U}(A)_{str} &\rightarrow \mathcal{U}(B)_{str} \\
 		\end{align*}
 		and it follows that
 		\begin{align*}
 			\phi^{\prime} \colon \pi_{0}(\mathcal{U}(A))_{str} &\rightarrow \pi_{0}(\mathcal{U}(B))_{str} \\ 
 		\end{align*}
 		is an isomorphism of topological groups. By Proposition \ref{sigmaleft_mult} we have the following isomorphisms of topological groups 
 		\begin{align*}
 			\Lambda_G \colon G_\sigma \xrightarrow{\cong} \mathcal{U}(A)
 			\textrm{   and   }  \Lambda_H \colon H_\rho \xrightarrow{\cong} \mathcal{U}(B)\,.
 		\end{align*}
 		We thus have the following commuting diagram
 		\begin{equation}
 			\label{diagram1}
 			\xymatrix { G_{\sigma}\ar@{>}[rr]^{\Lambda_G}\ar@{>}[d]^{(\gamma,g)\mapsto g} & \qquad &  \mathcal{U}(A)_{str}\ar@{>}[d]^{v \mapsto [v]}\ar@{>}[r]^{\phi} & \mathcal{U}(B)_{str}\ar@{>}[d]^{u \mapsto [u]} & \qquad & H_{\rho}\ar@{>}[ll]_{\Lambda_H}\ar@{>}[d]^{(\gamma,h)\mapsto h} \\ 
 				G \ar@{>}[rr]^{\Lambda^{\prime}_G} & \qquad & \pi_{0}(\mathcal{U}(A))_{str}\ar@{>}[r]^{\phi^{\prime}} &  \pi_{0}(\mathcal{U}(B))_{str} & \qquad  & H\ar@{>}[ll]_{\Lambda^{\prime}_{H}} }
 		\end{equation}
 		
 		where the horizontal maps are isomorphisms of topological groups and the downward maps are quotient maps. It follows that $G \cong H$ as topological groups. 
 		
 		It remains to show $\kappa \sim \sigma$. We have shown that $G_\sigma \cong H_\kappa$. Denote this isomorphism by $\varphi \colon G_\sigma \rightarrow H_\kappa$. Since $\varphi$ is an isomorphism, for every $g \in G$, we have that $\varphi(1,g) = (\gamma_{g},h_{g})$ for some unique $\gamma_g \in \mathbb{T}$ and some $h_g \in H$. This induces a continuous map $\gamma \colon G \rightarrow \mathbb{T}$ given by  $\gamma(g): =\gamma_g$, and a map $h \colon G \rightarrow H$ given by $h(g) := h_g$. Observe that $h$ is a continuous injective homomorphism. By the commutative diagram (\ref{diagram1}), we also have that $\varphi(\lambda,e_G) = (\lambda^\prime,e_H)$ for all $\lambda \in \mathbb{T}$ where $e_G$ its the unit of $G$, $e_H$ is the unit in $H$ and $\lambda^\prime$ is some element in $\mathbb{T}$. This also induces a map $\pi\colon \mathbb{T} \rightarrow \mathbb{T}$ given by $\pi(\lambda) = \lambda^\prime$, this is a continuous injective homomorphism of the unit circle, and since $\Phi$ is linear we deduce that $\lambda^\prime = \lambda$. Now let $g_{1}, g_{2} \in G$, then
 		\begin{align*}
 			(\gamma(g_1 g_2), h(g_1g_2)) &= \varphi((1,g_{1}g_{2})) \\
 			&= \varphi((\overline{\sigma(g_{1},g_{2})},e_G)(1,g_{1})(1,g_{2})) \\
 			&= (\overline{\sigma(g_{1},g_{2})},e_H)\varphi(1,g_{1})\varphi(1,g_{2}) \\ 
 			&= (\gamma(g_{1})\gamma(g_{2})\overline{\sigma(g_{1},g_{2})},e_H)(1,h_{g_1})(1,h_{g_2}) \\
 			&= (\gamma(g_{1})\gamma(g_{2})\overline{\sigma(g_{1},g_{2})},e_H)(\kappa(h_{g_1}, h_{g_2}), h_{g_1} h_{g_2}) \\
 			&= (\gamma(g_{1})\gamma(g_{2})\overline{\sigma(g_{1},g_{2})}\kappa(h_{g_1}, h_{g_2}),h_{g_1} h_{g_2})
 		\end{align*}
 		We thus have that $\gamma(g_1 g_2) = \gamma(g_{1})\gamma(g_{2})\overline{\sigma(g_{1},g_{2})}\kappa(g_1,g_2)$. Rewriting the expression we deduce that $\sigma(g_{1},g_{2})\overline{\kappa(g_1,g_2)} = \gamma(g_{1})\gamma(g_{2})\overline{\gamma(g_1 g_2)}$ which implies that $\sigma \sim \kappa$.
 		
 		Conversely assume that $G \cong H$, we can without loss of generality assume $G = H$ and assume that  $\sigma \sim \kappa$, then there exists a continuous map $\gamma \colon G \rightarrow \mathbb{T}$ such that
 		$\sigma(x_1,x_2)\overline{\kappa(x_1,x_2)} = \gamma(x_1)\gamma(x_2)\overline{\gamma(x_1x_2)}$ for every $x_1,x_2 \in G$. Define the map $\phi \colon L^{1}(G,\sigma) \rightarrow L^{1}(G,\kappa)$ by $\phi(f(x)) = \gamma(x)f(x)$. It is easy to see that this is a surjective isometry, and we are going to show that this is also a homomorphism. First note that $\sigma(y,y^{-1}x)\gamma(x) = \kappa(y,y^{-1}x)\gamma(y)\gamma(y^{-1}x)$. We then have that
 		\begin{align*}
 			(\phi(f_1)\ast_{\kappa} \phi(f_2))(x) &= (\gamma f_1 \ast_{\kappa} \gamma f_2)(x)\\
 			&= \int_G f_1(y)\gamma(y)\gamma(y^{-1}x)\kappa(y,y^{-1}x)f_2(y^{-1}x)dy \\
 			&= \gamma(x)(f_1 \ast_{\sigma} f_2)(x) = \phi((f_1 \ast_{\sigma} f_2))(x)\,,
 		\end{align*}
 		for every $x\in G$. So $\phi$ is an isometric isomorphism. This induces an isometric isomorphism 
 		$$
 		\Tilde{\phi} \colon (L^{1}(G,\sigma),\norm{\cdot}_{F^p_\lambda(G,\sigma)}) \rightarrow (L^{1}(G,\kappa), \norm{\cdot}_{F^p_\lambda(H,\kappa)}),
 		$$
 		which extends to the closures, thus $F^p_\lambda(G, \sigma)$ is isometrically isometric to $F^p_\lambda (G, \kappa)$.
 	\end{proof}
 	
 	For $p=2$,  $F^{2}_\lambda(G,\sigma)$ is the reduced group $C^\ast$-algebra usually denoted by $C_{\lambda}^*(G,\sigma)$. It is well known that whenever $G\cong H$ and $\sigma\sim \kappa$ then $C^*_\lambda(G,\sigma)\cong C^*_\lambda(H,\kappa)$, however the other direction it is not true in general, for example $C^{\ast}_{\lambda}(\mathbb{Z}_{2}\times\mathbb{Z}_{2},1)$ is isometrically isomorphic to $C^{\ast}_{\lambda}(\mathbb{Z}_{4},1)$ where $1$ denotes the trivial twist, but the groups $\mathbb{Z}_{2}\times\mathbb{Z}_{2}$ and $\mathbb{Z}_{4}$ are not isomorphic. 
 	
 	\section{Twisted $L^p$-operator algebras of amenable groups.}

 	For the trivial twist, Gardella and Thiel showed in \cite[Theorem 3.7]{Gardella_2015} that $F^p(G)$ and  $F^p_\lambda(G)$ are isometrically isomorphic if and only if $G$ is amenable (this was also proved independently by Phillips). Using this theorem we will prove that if $G$ is amenable, then $F^p(G,\sigma) \cong F^p_\lambda(G,\sigma)$. 
 	
 	We say that a locally compact group $G$ is \emph{amenable} if for every $\varepsilon > 0$ and for every compact subset $K \subseteq G$ there exists a compact subset $ F\subseteq G$ such that $\mu(FK \Delta F) \leq \varepsilon\mu(F)$. 
 	
 	\begin{example}
 		\begin{enumerate}
 			\item  	Every compact group is amenable.
 			\item The extension of two amenable groups is also amenable. Given a locally compact group $G$ and $\sigma$ a continuous 2-cocycle on $G$, we have the following short exact sequence
 			$$1\rightarrow \mathbb{T} \rightarrow G_\sigma \rightarrow G\rightarrow 1\,.$$
 			Therefore, if $G$ is amenable, then the Mackey group $G_\sigma$ is also amenable. 
 		\end{enumerate}
 		
 	\end{example}

 	Let $p \in [1,\infty)$, let $G$ be a locally compact group and $\sigma$ a continuous 2-cocycle for $G$, and $E$ an $L^p$-space. A \emph{$\sigma$-projective isometric representation} of $G$ is a strongly continuous map $\pi \colon G \rightarrow \mathcal{U}(\mathcal{B}(E))$ such that
 	\begin{enumerate}
 		\item $\pi(x_1)\pi(x_2) = \sigma(x_1,x_2)\pi(x_1x_2)$ and
 		\item $\pi(e) = \textrm{Id}$.
 	\end{enumerate}
 	
 	It is well known that there is a natural bijective correspondence between $\sigma$-projective isometric representations of $G$ on $E$ and non degenerate $\sigma$-projective representations $L^1(G,\sigma) \rightarrow \mathcal{B}(E)$. If $\pi$ is a $\sigma$-projective isometric representation, then the induced  non degenerated representation $\pi \colon L^1(G,\sigma) \rightarrow \mathcal{B}(E)$ and $f\in L^{1}(G,\sigma)$ is given by 
 	$$
 	\pi(f)(\xi)(x) = \int_G f(y)\pi(y)\xi(x) dy
 	$$
 	for all $\xi\in E$ and $x\in G$, and it is called the \emph{integrated form of $\pi$}.
 	
 	\begin{definition}
 		Let $G$ be a locally compact group, let $\sigma$ be a continuous 2-cocycle in $G$ and let $p \in [1,\infty)$. We define $F^{p}(G,\sigma)$ as the \emph{full $\sigma$-twisted group $L^{p}$-operator algebra} as the completion of $L^{1}(G,\sigma)$ in the norm
 		$$
 		\norm{f}_{F^{p}} := \textrm{sup}\{\norm{\varphi(f)} \colon \varphi \colon L^{1}(G,\sigma) \rightarrow \mathcal{B}(E) \textrm{ is a contractive algebra homomorphism}\}\,,
 		$$
 		where $E$ ranges over all $L^{p}$-spaces. 
 	\end{definition}
 	
 	The argument of  \cite[Proposition 4.6]{gardella2019modern} shows that this is in fact an $L^p$-operator algebra.
 	
 	\begin{remark}
 		By the argument of \cite[Proposition 2.3]{Gardella_2018}, the full algebra can equivalently be defined as the completion of $L^1(G,\sigma)$ with respect to non-degenerate contractive representations. 
 	\end{remark}

 	A $\sigma$-projective isometric representation of $G$ induces an isometric representation of $G_\sigma$. This is done by sending $\pi \colon G \rightarrow \mathcal{U}(\mathcal{B}(L^p(G)))$ to $\pi_\sigma \colon G_\sigma \rightarrow \mathcal{U}(\mathcal{B}(L^p(G)))$ where 
 	$$
 	\pi_\sigma(\gamma,x) := \overline{\gamma}\pi(x)\,, 
 	$$
 	for every $(\gamma,x)\in G_\sigma$.
 	We can map  $L^p(G)$ isometrically to a subspace of $L^p(G_\sigma)$ with the map $j\colon L^p(G,\sigma) \rightarrow L^p(G_\sigma)$ given by
 	$$
 	j(f)(\gamma,x) := \gamma f(x)\,,
 	$$
 	for every $(\gamma,x)\in G_\sigma$.
 	By \cite[Lemma 3.3]{are}, the embedding is isometric and we thus have that $j(L^p(G))$ is a closed subspace of $L^p(G_\sigma)$ for $p \in [1,\infty]$. For $p=1$ the embedding is also an algebra homomorphism, and so we have that $j(L^1(G,\sigma))$ is a closed subalgebra of $L^1(G_\sigma)$. By expanding the functions of $L^p(G_\sigma)$ as Fourier series with respect to the second argument we can get an explicit description of the subspace $j(L^p(G,\sigma))$. For any $\xi\in L^p(G_\sigma)$ and any $x \in G$ we have that $\gamma \mapsto \xi(\gamma,x)$ is a function in $L^p(\mathbb{T}) \subseteq L^{p}(G_\sigma)$, which means that the Fourier coefficients 
 	$$
 	\xi_k(x) = \int_\mathbb{T}\xi(x,\gamma) \overline{\gamma}^nd\gamma
 	$$
 	are well defined, and that the resulting Fourier series
 	$$
 	\xi(\gamma,x) = \sum_{k\in\mathbb{Z}} \xi_k(x)\gamma^k
 	$$
 	converges in $L^p(\mathbb{T})$. By \cite[Lemma 3.4]{are} we have that
 	$$
 	j(L^p(G)) =\{\xi\in L^p(G_\sigma) \colon \xi_k = 0 \textrm{ for } k\neq 1 \}
 	$$
 	for $p\in[1,\infty]$.
 	The following proposition is \cite[Proposition 3.5]{are}.
 	
 	\begin{proposition}
 		\label{prev}
 		Let $G$ be a locally compact group and let $\sigma$ be a continuous 2-cocycle for $G$. Let $p \in [1,\infty)$, let $\xi \in L^1(G_\sigma)$ and let $\zeta\in L^p(G_\sigma)$. Then
 		$$
 		(\xi\ast \zeta)(\gamma,x) = \sum_{n\in \mathbb{Z}}(\xi_n\ast_{\sigma^n}\zeta_n)(x)\gamma^n\,.
 		$$
 	\end{proposition}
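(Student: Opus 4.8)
The plan is to prove the identity coefficientwise in the $\mathbb{T}$-variable, computing the $n$-th Fourier coefficient of $\xi\ast\zeta$ directly from the definition of convolution on the group $G_\sigma$, rather than manipulating Fourier series (whose norm convergence is delicate at $p=1$). First I would record the measure-theoretic setup: $G_\sigma$ is, as in the excerpt, the central extension of $G$ by $\mathbb{T}$ with underlying space $\mathbb{T}\times G$, and the product of normalized Haar measure on $\mathbb{T}$ with a left Haar measure on $G$ is left-invariant for the twisted product, hence is a left Haar measure on $G_\sigma$; thus $\int_{G_\sigma}F\,dw=\int_G\int_{\mathbb{T}}F(u,y)\,du\,dy$. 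By the standard estimate $\norm{\xi\ast\zeta}_p\leq\norm{\xi}_1\norm{\zeta}_p$ we have $\xi\ast\zeta\in L^p(G_\sigma)$ with the convolution integral absolutely convergent at almost every point, and Jensen's inequality on $\mathbb{T}$ gives $\xi_n\in L^1(G)$ and $\zeta_n\in L^p(G)$ for every $n$, so that $\xi_n\ast_{\sigma^n}\zeta_n$ is well defined; these facts also license the Fubini interchanges below.

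Now fix $n\in\mathbb{Z}$. Unwinding the definitions,
$$(\xi\ast\zeta)_n(x)=\int_{\mathbb{T}}\overline{\gamma}^{\,n}\int_G\int_{\mathbb{T}}\xi(u,y)\,\zeta\bigl((u,y)^{-1}(\gamma,x)\bigr)\,du\,dy\,d\gamma.$$
The structural point is that the explicit formulas for the product and the inverse in $G_\sigma$ give $(u,y)^{-1}(\gamma,x)=\bigl(c(u,y,x)\,\gamma,\ y^{-1}x\bigr)$, where $c(u,y,x)\in\mathbb{T}$ is the product of $\overline{u}$ with two values of $\sigma$ and, crucially, does not depend on $\gamma$. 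Hence $\zeta\bigl((u,y)^{-1}(\gamma,x)\bigr)=\zeta(c\gamma,\,y^{-1}x)$; interchanging the order of integration so that the $\gamma$-integral is evaluated first and performing there the harmless change of variable $\gamma\mapsto\overline{c}\,\gamma$ (Haar measure on $\mathbb{T}$ being translation invariant), it becomes $c(u,y,x)^{\,n}\,\zeta_n(y^{-1}x)$. Splitting off the factor $\overline{u}^{\,n}$ from $c^{\,n}$, the $u$-integral then gives $\int_{\mathbb{T}}\xi(u,y)\,\overline{u}^{\,n}\,du=\xi_n(y)$, and we are left with
$$(\xi\ast\zeta)_n(x)=\int_G \xi_n(y)\,\zeta_n(y^{-1}x)\,s_n(y,x)\,dy,$$
where $s_n(y,x)$ is the $n$-th power of the $\sigma$-part of $c(u,y,x)$, a product of two values of $\sigma$ depending only on $y$ and $x$.

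It remains to identify $s_n(y,x)$ with $\sigma^n(y,y^{-1}x)$. This is where the cocycle identity of Definition \ref{cocycle} enters: evaluating (1) at $(x_1,x_2,x_3)=(y^{-1},y,y^{-1}x)$ and using $\sigma(e,\cdot)=1$ yields $\sigma(y^{-1},y)=\sigma(y^{-1},x)\,\sigma(y,y^{-1}x)$, while evaluating it at $(x_1,x_2,x_3)=(y,y^{-1},y)$ gives $\sigma(y,y^{-1})=\sigma(y^{-1},y)$; together these rewrite the $\sigma$-part of $c(u,y,x)$ in terms of $\sigma(y,y^{-1}x)$, so that (using that $\sigma^n$ is again a continuous $2$-cocycle, which is immediate from the definition) the last integral equals $(\xi_n\ast_{\sigma^n}\zeta_n)(x)$. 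Reassembling via the Fourier series representation of $L^p(G_\sigma)$-functions recalled in the excerpt then gives $(\xi\ast\zeta)(\gamma,x)=\sum_{n\in\mathbb{Z}}(\xi_n\ast_{\sigma^n}\zeta_n)(x)\gamma^n$. I expect the only real friction to be bookkeeping — tracking every conjugate through the inverse formula and through $c(u,y,x)$, and arranging the absolute-convergence and Fubini arguments so that they are valid already at $p=1$, which is exactly why it is cleaner here to work with the coefficients directly rather than with the series.
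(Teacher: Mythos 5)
The paper offers no proof of this proposition at all --- it is quoted verbatim from \cite[Proposition 3.5]{are} --- so there is no argument of the authors to compare yours against. Your strategy (compute the $n$-th Fourier coefficient of $\xi\ast\zeta$ in the $\mathbb{T}$-variable directly, using that the $\mathbb{T}$-component of $(u,y)^{-1}(\gamma,x)$ depends on $\gamma$ only through multiplication by a constant $c(u,y,x)$) is the natural one, and the measure-theoretic scaffolding --- left invariance of the product measure on $G_\sigma$, Young's inequality, Jensen's inequality for the coefficients, the Fubini interchanges, and the decision to work coefficientwise rather than with series at $p=1$ --- is all in order.

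The problem sits exactly at the step you postpone as ``bookkeeping'', and it does not come out the way you assert. With the paper's formulas, $(u,y)^{-1}=(\overline{u\sigma(y^{-1},y)},y^{-1})$, hence $(u,y)^{-1}(\gamma,x)=\bigl(\overline{u}\,\overline{\sigma(y^{-1},y)}\,\sigma(y^{-1},x)\,\gamma,\;y^{-1}x\bigr)$, so the $\sigma$-part of your $c(u,y,x)$ is $\overline{\sigma(y^{-1},y)}\,\sigma(y^{-1},x)$. Your own cocycle identity $\sigma(y^{-1},y)=\sigma(y^{-1},x)\,\sigma(y,y^{-1}x)$ then gives $\overline{\sigma(y^{-1},y)}\,\sigma(y^{-1},x)=\overline{\sigma(y,y^{-1}x)}$, so $s_n(y,x)=\overline{\sigma(y,y^{-1}x)}^{\,n}$ and the computation lands on $(\xi\ast\zeta)_n=\xi_n\ast_{\overline{\sigma}^{\,n}}\zeta_n=\xi_n\ast_{\sigma^{-n}}\zeta_n$, not $\xi_n\ast_{\sigma^{n}}\zeta_n$. (A check with $G=\mathbb{Z}^2$ and $\sigma((m,n),(p,q))=e^{2\pi i\theta np}$ confirms the conjugate.) Since this identification is the entire content of the proposition --- everything else is routine --- asserting the unconjugated answer is a genuine error rather than a cosmetic one. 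The blame is partly the paper's: with the Mackey product and the embedding $j(f)(\gamma,x)=\gamma f(x)$ exactly as written here, the same computation shows $j(f)\ast j(g)=j(f\ast_{\overline{\sigma}}g)$, so the stated proposition is only consistent with some conjugate-flipped convention (e.g.\ product $\gamma_1\gamma_2\overline{\sigma(x_1,x_2)}$ on $G_\sigma$, or $j(f)(\gamma,x)=\overline{\gamma}f(x)$, or oppositely indexed Fourier coefficients). A complete proof must either adopt and state one such convention explicitly or accept the exponent $-n$; as written, yours does neither.
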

 	
 	Let $f \in L^1(G,\sigma)$. From \cite[Lemma 3.3]{are} it follows that
 	$$
 	\lambda_p(j(f))j(\xi) = j(f) \ast j(\xi) = j(f\ast_\sigma \xi)  = j(\lambda_p^\sigma(f)\xi)
 	$$
 	for all $\xi \in L^p(G)$, and from the Proposition \ref{prev} we deduce that $\norm{\lambda_p^\sigma(f)}_{\mathcal{B}(L^p(G))} = \norm{\lambda_p(j(f))}_{\mathcal{B}(L^p(G_\sigma))}$. It follows that $j$ maps $F_\lambda^p(G,\sigma)$ isometrically to a closed subalgebra of $F_\lambda^p(G_\sigma)$.
 	
 	Let $\pi$ be a $\sigma$-projective representation of $G$ on $E$ for some $L^p$-space $E$. Let $f \in L^1(G,\sigma)$, we have that
 	$$
 	\pi_\sigma(j(f))(\xi) = \int_G\int_\mathbb{T}\alpha f(y)\overline{\alpha}\pi(y)\xi d\alpha dy = \pi(f)\xi\,,
 	$$
 	for every $\xi\in E$, which implies that $\norm{\pi(f)}_{\mathcal{B}(E)} = \norm{\pi_\sigma(j(f))}_{\mathcal{B}(E)}$.
 	
 	\begin{theorem}\label{amenable_twisted}
 		Let $p \in (1,\infty)$, let $G$ be a locally compact group and let $\sigma$ be a continuous 2-cocycle for $G$. If $G$ is amenable, then there is an isometric isomorphism
 		$$
 		F^p(G,\sigma) \cong F_\lambda^p(G,\sigma).
 		$$
 	\end{theorem}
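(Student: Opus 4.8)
The plan is to show that the full norm $\norm{\cdot}_{F^p}$ and the reduced norm $\norm{\cdot}_{F^p_\lambda(G,\sigma)}$ agree on the dense subalgebra $L^1(G,\sigma)$; once this is done, the identity map on $L^1(G,\sigma)$ extends to the asserted isometric isomorphism of the completions $F^p(G,\sigma)$ and $F^p_\lambda(G,\sigma)$. One inequality is free: the integrated form $\lambda_p^\sigma\colon L^1(G,\sigma)\to\mathcal{B}(L^p(G))$ is a contractive representation, so $\norm{f}_{F^p_\lambda(G,\sigma)}=\norm{\lambda_p^\sigma(f)}\le\norm{f}_{F^p}$ for every $f\in L^1(G,\sigma)$, and the content of the theorem is the reverse inequality $\norm{f}_{F^p}\le\norm{f}_{F^p_\lambda(G,\sigma)}$. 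To prove it I would first use the Remark following the definition of $F^p(G,\sigma)$ together with the bijective correspondence recalled above between non-degenerate contractive representations of $L^1(G,\sigma)$ and $\sigma$-projective isometric representations of $G$ on $L^p$-spaces: this gives $\norm{f}_{F^p}=\sup_\pi\norm{\pi(f)}$, where $\pi$ ranges over all $\sigma$-projective isometric representations of $G$ on $L^p$-spaces and, as usual, $\pi$ also denotes its integrated form. Hence it suffices to show $\norm{\pi(f)}\le\norm{\lambda_p^\sigma(f)}$ for each such $\pi$.

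So fix a $\sigma$-projective isometric representation $\pi$ of $G$ on an $L^p$-space $E$ and pass to the associated isometric representation $\pi_\sigma\colon G_\sigma\to\mathcal{U}(\mathcal{B}(E))$, $\pi_\sigma(\gamma,x)=\overline{\gamma}\pi(x)$, of the Mackey group. Since $j(f)\in L^1(G_\sigma)$, since the computation $\pi_\sigma(j(f))=\pi(f)$ carried out above applies, and since the integrated form of an isometric representation of $G_\sigma$ is a contractive representation of $L^1(G_\sigma)$, we get $\norm{\pi(f)}=\norm{\pi_\sigma(j(f))}\le\norm{j(f)}_{F^p(G_\sigma)}$. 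Now comes the decisive step: because $G$ is amenable the Mackey group $G_\sigma$ is amenable as well (it is an extension of $G$ by $\mathbb{T}$), so \cite[Theorem 3.7]{Gardella_2015} gives that the full and reduced norms of the untwisted algebra of $G_\sigma$ coincide, whence $\norm{j(f)}_{F^p(G_\sigma)}=\norm{\lambda_p(j(f))}_{\mathcal{B}(L^p(G_\sigma))}$. Finally, the isometry $\norm{\lambda_p(j(f))}_{\mathcal{B}(L^p(G_\sigma))}=\norm{\lambda_p^\sigma(f)}_{\mathcal{B}(L^p(G))}$ established above, via the Fourier decomposition of Proposition \ref{prev} and the description of $j(L^p(G,\sigma))$ as the degree-one subspace of $L^p(G_\sigma)$, yields $\norm{\pi(f)}\le\norm{\lambda_p^\sigma(f)}=\norm{f}_{F^p_\lambda(G,\sigma)}$. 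Taking the supremum over $\pi$ gives $\norm{f}_{F^p}\le\norm{f}_{F^p_\lambda(G,\sigma)}$, which together with the free inequality completes the proof.

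I expect the genuinely delicate point to be not the final assembly, which is short once the machinery above is in place, but the isometry $\norm{\lambda_p^\sigma(f)}_{\mathcal{B}(L^p(G))}=\norm{\lambda_p(j(f))}_{\mathcal{B}(L^p(G_\sigma))}$ it rests on: this is exactly where the Fourier analysis of $L^p$-functions on the Mackey group is needed, and it is the device that lets amenability of $G_\sigma$ be transferred back to $G$. A second point requiring care is the bookkeeping in the first paragraph, namely that every representation contributing to $\norm{f}_{F^p}$ really does arise, through $\pi\mapsto\pi_\sigma$ and the embedding $j$, from a contractive representation of $L^1(G_\sigma)$ agreeing with it on $j(f)$, so that $\norm{f}_{F^p}$ is genuinely dominated by $\norm{j(f)}_{F^p(G_\sigma)}$. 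The restriction $p\in(1,\infty)$ is inherited from \cite[Theorem 3.7]{Gardella_2015}.
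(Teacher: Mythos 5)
Your proposal is correct and follows essentially the same route as the paper: pass to the Mackey group $G_\sigma$ via the isometric embedding $j$, use $\norm{\pi(f)}=\norm{\pi_\sigma(j(f))}$ and $\norm{\lambda_p^\sigma(f)}=\norm{\lambda_p(j(f))}$ (the Fourier decomposition of Proposition \ref{prev}), and invoke amenability of $G_\sigma$ together with \cite[Theorem 3.7]{Gardella_2015}. The only cosmetic difference is that the paper phrases the chain of inequalities as a proof by contradiction, whereas you write it directly.
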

 	
 	\begin{proof}
 		Assume that $G$ is amenable, since $\mathbb{T}$ is compact, this implies that $G_\sigma$ is amenable, and by \cite[Theorem 3.7]{Gardella_2015} we have that $F_\lambda^p(G_\sigma) \cong F^p(G_\sigma)$. Let $f \in L^1(G,\sigma)$, and let $\pi$ be any $\sigma$-projective representation of $G$ on some $L^p$-space $E$. We want to show that $\|\pi(f)\|_{\mathcal{B}(E)} \leq \|\lambda_p^\sigma(f)\|_{\mathcal{B}(L^p(G))}$. Assume that this is not the case, i.e., that $\|\pi(f)\|_{\mathcal{B}(E)} > \|\lambda_p^\sigma(f)\|_{\mathcal{B}(L^p(G))}$. Note that $\pi$ induces a representation $\pi_\sigma$ of $G_\sigma$ on $E$ and that $\|\pi(f)\|_{\mathcal{B}(E)} = \|\pi_\sigma(j(f))\|_{\mathcal{B}(E)}$. We then have, by the definition, that $\|\pi_\sigma(j(f))\|_{\mathcal{B}(L^p(G_\sigma))} \leq \|j(f)\|_{F^p(G_\sigma)}$, but since $G$ is amenable  $\|j(f)\|_{F^p(G_\sigma)} = \|\lambda_p(j(f))\|_{\mathcal{B}(L^p(G_\sigma))}$. Thus, we have that
 		\begin{align*}
 			\|\pi(f)\|_{\mathcal{B}(E)} &> \|\lambda_p^\sigma(f)\|_{\mathcal{B}(L^p(G))} \\
 			&= \|\lambda_p(j(f))\|_{\mathcal{B}(L^p(G_\sigma))} \\ 
 			&= \|j(f)\|_{F^p(G_\sigma)} \\
 			&\geq \|\pi_\sigma(j(f))\|_{\mathcal{B}(E)} = \|\pi(f)\|_{\mathcal{B}(E)},
 		\end{align*}
 		which is a contradiction. It follows that $\|f\|_{F^p(G,\sigma)} = \|\lambda_p^\sigma(f)\|$ for all $f \in L^1(G,\sigma)$.
 	\end{proof}
 	\begin{remark}
 		For $p=1$, one can easily deduce, as  in \cite[Proposition 4.9]{gardella2019modern}, that $F^1(G,\sigma) = F^1_\lambda(G,\sigma) = L^1(G,\sigma)$.
 		
 	\end{remark}

 	As a consequence of this the reduced twisted group algebra generated by an amenable group can be characterized in terms of generators and relations.
 	
 	\begin{example}
 		Define the function $\sigma \colon \mathbb{Z}^{2} \times  \mathbb{Z}^{2} \rightarrow \mathbb{T}$ by $\sigma_\theta((m,n),(p,q)) = e^{2\pi inp\theta}$ for some irrational number $\theta \in \mathbb{R} \setminus{\mathbb{Q}}$. This is a continuous 2-cocycle on $\mathbb{Z}^2$ and for $p=2$, we have that $F^2(\mathbb{Z}^2,\sigma_\theta) = C^*(\mathbb{Z}^2,\sigma_\theta)$, which is the irrational rotation algebra, denoted by $A_\theta$ \cite{Rieffel}. Analogous to this, we will call $F^p(\mathbb{Z}^{2},\sigma_\theta)$ \emph{the $p$-irrational rotation} and denote it by $A^p_\theta$. Write $U_\theta = \delta_{(0,1)}$ and $V_\theta = \delta_{(0,1)}$, these are the generators of $F^p(\mathbb{Z}^{2},\sigma_\theta)$ and we have that $U_\theta V_\theta = e^{2\pi i\theta}V_\theta U_\theta$. For $p=2$ and $\theta = 0$ we have, by the Gelfand transform, that $F^2(\mathbb{Z}^2) \cong C(\mathbb{T}^2)$, the space of continuous function on the 2-torus. This is why this algebra is also known as the non commutative torus.
 		
 		Note that $\mathbb{Z}^{2}$ is an amenable group and as we have just shown this implies that $A^p_\theta =F^p(\mathbb{Z}^{2},\sigma_\theta) \cong F^p_\lambda(\mathbb{Z}^{2},\sigma_\theta)$. Furthermore, let $p\in (1,\infty)\setminus\{2\}$. Then, by Theorem \ref{main-group}, we have that $A^p_\theta$ and $A^p_\phi$ are isometrically isomorphic  if and only if $\sigma_\theta \sim \sigma_\phi$, that is, if there exists a continuous map $\gamma \colon \mathbb{Z}^{2} \rightarrow \mathbb{T}$ such that
 		
 		\begin{align*}
 			e^{2\pi inp\theta}e^{-2\pi inp\phi} = e^{2\pi inp(\theta-\phi)} = \gamma(m,n)\gamma(p,q)\overline{\gamma(m+p,n+q)}.
 		\end{align*}
 		
 		Note that $\gamma(1,0)\gamma(0,1)\overline{\gamma(1,1)} = 1$ and $\gamma(0,1)\gamma(1,0)\overline{\gamma(1,1)} = e^{2\pi i(\theta-\phi)}$, and so $e^{2\pi i(\theta-\phi)} = 1$ which means that $\theta-\phi \in \mathbb{Z}$. Conversely, if $\theta-\phi \in \mathbb{Z}$, then setting $ \gamma \equiv 1$ shows that 
 		$\sigma_\theta \sim \sigma_\phi$. 
 		
 		We would like to recall that when $p=2$ we have that $A_\theta\cong A_{1-\theta}$ \cite{Rieffel1}, so in this case the $C^*$-algebra  and the $L^p$-operator algebra non commutative torus behave differently.  This was observed before by Gardella and Thiel.
 	\end{example}

 	
 	
 	\section{The reduced twisted groupoid $L^p$-operator algebra}

 	In this section we will define the reduced twisted groupoid $L^p$-operator algebra along the lines of the work of \cite{choi2021rigidity,RenaultJean1980AGAt,2017arXiv171010897S}.

 	\subsection{Étale groupoids and twists.}
 	A groupoid $\mathcal{G}$  is a small category with inverses. We will denote by $\mathcal{G}^{(0)}$ the units of $\mathcal{G}$ and the source and the range of $\gamma\in \mathcal{G}$ by $s(\gamma)$ and $r(\gamma)$ respectively. Two elements $\alpha,\beta\in \mathcal{G}$ are composable  whenever $s(\alpha) = r(\beta)$, and we will denote by $\mathcal{G}^{(2)}$ the set of \emph{composable pairs}. Every unit can be identified with the identity morphism at the unit, so since every $\gamma\in \mathcal{G}$ has an inverse $\gamma^{-1}$, the unit space is under these identifications the subset of $\mathcal{G}$ of elements of the form $\gamma\gamma^{-1}$. The range map and source map $r,s \colon \mathcal{G} \rightarrow \mathcal{G}^{(0)}$ are maps given by 
 	\begin{align*}
 		r(\gamma)&= \gamma\gamma^{-1} &&\textrm{and} &s(\gamma)= \gamma^{-1}\gamma\,.
 	\end{align*}

 	For each unit $x \in \mathcal{G}^{(0)}$, we define
 	$$\mathcal{G}^{x} := \{\gamma \in \mathcal{G} \colon r(\gamma) = x \}\qquad\text{and}\qquad
 	\mathcal{G}_{x} := \{\gamma \in \mathcal{G} \colon s(\gamma) = x \}\,,$$
 	and $\mathcal{G}_{x}^{x}:= \mathcal{G}_{x} \cap \mathcal{G}^{x} = \{ \gamma \in \mathcal{G} \colon r(\gamma) = s(\gamma) = x\}$ is closed under product and inversion and therefore it is a group. We call the group  $\mathcal{G}_{x}^{x}$ the \emph{isotropy group} at $x$. One says that $x$ has \emph{trivial isotropy} if $\mathcal{G}_{x}^{x}$ only contains $x$. The set $\mathcal{G}^{\prime} = \{ \gamma \in \mathcal{G} \colon s(\gamma) = r(\gamma) \}$ is called the isotropy bundle. A \emph{topological groupoid} is a groupoid $\mathcal{G}$ endowed with a topology that makes the structure maps continuous. The topology on $\mathcal{G}^{(2)}$ is the relative topology inherited from the product topology in $\mathcal{G} \times \mathcal{G}$.

 	\begin{definition}
 		A locally compact groupoid $\mathcal{G}$ is called \emph{étale} if the range map $r \colon \mathcal{G} \rightarrow \mathcal{G}^{(0)}$ is a local homeomorphism.
 	\end{definition}
 	The trivial example of an étale groupoids are the discrete groups. 
 	\begin{definition}
 		Let $\mathcal{G}$ be a locally compact étale groupoid. $\mathcal{G}$ is said to be \emph{topological principal} if the set of points in $\mathcal{G}^{(0)}$ with trivial isotropy is dense in $\mathcal{G}^{(0)}$. $\mathcal{G}$ is said to be \emph{effective} if the interior of $\mathcal{G}^\prime$ is $\mathcal{G}^{(0)}$.
 	\end{definition}
 	
 	If $\mathcal{G}$ is a topologically principal and Hausdorff étale groupoid, then $\mathcal{G}$ is effective. The converse is not necessarily true. 
 	
 	A subset $B\subseteq \mathcal{G}$ is called a \emph{bisection} if there exists an open subset $U \subseteq \mathcal{G}$ containing $B$ such that the restriction of the range and source maps to $U$ are local homeomorphism. If $B$ is open then we call $B$ an \emph{open bisection} and the restrictions of the range and source maps to $B$ are local homeomorphisms.

 	\begin{definition}
 		A semigroup $S$ is called an \emph{inverse semigroup} if for each $s\in S$ there is a unique $t\in S$ such that 
 		$$
 		sts = s\qquad\text{and}\qquad tst = t\,.
 		$$
 		We write the element $t$ as $s^{\sharp}$. The map $s \mapsto s^{\sharp}$ is called the involution on $S$.
 	\end{definition}
 	
 	A \emph{partial homeomorphism} of a topological space $X$ is a homeomorphism $U \rightarrow V$ between open subsets $U$ and $V$ of $X$. Let $X$ be a compact Hausdorff space. We denote by $\textrm{Homeo}_{par}(X)$ the set of partial homeomorphisms of $X$, which forms an inverse semigroup. Let $\varphi \in \textrm{Homeo}_{par}(X)$. We denote $\textrm{dom}(\varphi)$ for the domain of $\varphi$ in $X$.

 	Let $X$ be a compact Hausdorff space, and let $S$ be an inverse subsemigroup of $\textrm{Homeo}_{par}(X)$. \emph{The groupoid of germs of $S$}, $\mathcal{G}(S)$, is defined as follows: On the set 
 	$$
 	\{(\varphi,x) \in S \times X \colon \varphi \in S, x \in \textrm{dom}(\varphi) \}
 	$$
 	we define the following equivalence class where $(\varphi,x) \sim (\phi,y) $ whenever $x=y$ and there exists a neighborhood $U$ of $x$ in $X$ such that ${\varphi}_{\restriction_U} = {\phi}_{\restriction_U}$. We write $[\varphi,x]$ for the equivalence class of $(\varphi,x)$. Then, $\mathcal{G}(S)$ has a natural groupoid structure with $r([\varphi,x]) = \varphi(x)$ and $s([\varphi,x]) = x$, multiplication given by
 	$$
 	[\varphi,\phi(y)][\phi,y] = [\varphi\circ \phi,y]
 	$$
 	and inverse given by
 	$$
 	[\varphi,x]^{-1} = [\varphi^{-1},\varphi(x)]\,.
 	$$
 	The groupoid of germs $\mathcal{G}(S)$ becomes an étale groupoid under the topology given by basic open sets $\mathcal{U}(U,\varphi) = \{[\varphi,x] \colon x \in U\}$ indexed by elements $ \varphi\in S$ and open subsets $U\subseteq \textrm{dom}(\varphi)$. The unit space of $\mathcal{G}(S)$ can be canonically identified with $X$ and it is thus a compact Hausdorff space. For details see \cite[Section 3]{renault2008cartan}. 
 	
 	\begin{remark}
 		Let $\mathcal{G}$ be an étale groupoid. Then the open bisections form an inverse semigroup denoted by $\mathcal{S}(\mathcal{G})$. Note that given two subsets $U,V \in \mathcal{G}$ composition is given by $UV = \{uv \colon s(u) = r(v)\}$. It follows that for two open bisections the composition is also an open bisection and that the composition is associative. Given an open bisection $B$ we also have $BB^{-1}B = B$ and $B^{-1}BB^{-1} = B^{-1}$.
 	\end{remark}
 	
 	\begin{remark}
 		\label{beta}
 		Note that any open bisection $B \in \mathcal{S}(\mathcal{G})$ defines a homeomorphism $\beta_B \colon s(B) \rightarrow r(B)$ given by $\beta_B(x) = r(Bx)$ for all $x \in s(B)$. Moreover, since the set of partial homeomorphisms is an inverse semigroup, the induced map $\beta \colon S(\mathcal{G}) \rightarrow \text{Homeo}_\textrm{par}(\mathcal{G}^{(0)})$ is an inverse semigroup homomorphism. We let $\mathcal{P}(\mathcal{G})$ denote the image of $\beta$. By  \cite[Corollary 3.4]{renault2008cartan}, the groupoid of germs of $\mathcal{P}(\mathcal{G})$ is isomorphic to $\mathcal{G}$ if and only if $\mathcal{G}$ is effective. Moreover, if this is the case, then $\beta$ identifies $\mathcal{S}(\mathcal{G})$ bijectively with $\mathcal{P}(\mathcal{G})$.
 	\end{remark}
 	
 	We will follow \cite[Chapter 5]{2017arXiv171010897S} in defining the twist.
 	
 	\begin{definition}
 		\label{twist_def}
 		Let $\mathcal{G}$ be an étale groupoid. A \emph{twist} over $\mathcal{G}$ is a sequence 
 		$$\mathcal{G}^{(0)} \times \mathbb{T} \xrightarrow{i} \mathcal{E} \xrightarrow{\pi} \mathcal{G}$$
 		where $\mathcal{G}^{(0)} \times \mathbb{T}$ is regarded as a trivial group bundle with fibres $\mathbb{T}$, $\mathcal{E}$ is a locally compact Hausdorff groupoid, and $i$ and $\pi$ are continuous groupoid homomorphisms that restrict to homeomorphisms of unit spaces, and such that
 		\begin{enumerate}
 			\item $i$ is injective,
 			\item $\mathcal{E}$ is locally a trivial $\mathcal{G}$-bundle, i.e., every point $\gamma \in \mathcal{G}$ has a bisection neighborhood $U$ such that there exists a continuous section $S \colon U \rightarrow \mathcal{E}$ satisfying $\pi \circ S = \text{id}_U$ and such that the map $(\gamma,z) \mapsto i(r(\gamma),z)S(\gamma)$ is a homeomorphism $U \times \mathbb{T} \rightarrow \pi^{-1}(U)$,
 			\item $i(\mathcal{G}^{(0)} \times \mathbb{T})$ is central in $\mathcal{E}$, i.e., $i(r(\varepsilon),z)\varepsilon = \varepsilon i(s(\varepsilon),z)$ for all $\varepsilon \in \mathcal{E}$ and $z \in \mathbb{T}$; and
 			\item $\pi^{-1}(\mathcal{G}^{(0)}) = i(\mathcal{G}^{(0)} \times \mathbb{T})$.
 		\end{enumerate}
 	\end{definition}
 	
 	If $\mathcal{E}$ is a twist over $\mathcal{G}$ for $z \in \mathbb{T}$ and $\varepsilon \in \mathcal{E}$, we write  $z \cdot \varepsilon  = i(r(\varepsilon),z)\varepsilon$ and $\varepsilon \cdot z = \varepsilon i(s(\varepsilon),z)$. Note that $\varepsilon \cdot z =  z \cdot \varepsilon$ since $\mathcal{E}$ is central in $\mathcal{G}$. We identify $\mathcal{E}^{(0)}$ with $\mathcal{G}^{(0)}$ via the map $x \mapsto i(x,1)$. 
 	
 	\begin{lemma}
 		Let $\mathcal{G}$ be an étale groupoid and let $\mathcal{E}$ be a twist over $\mathcal{G}$. If two elements $\varepsilon,\delta \in \mathcal{E}$ satisfy $\pi(\varepsilon) = \pi(\delta)$, then there exists $z \in \mathbb{T}$ such that $z \cdot \varepsilon = \delta$.
 	\end{lemma}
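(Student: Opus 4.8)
\emph{Proof proposal.} The plan is to form the element $\delta\varepsilon^{-1}$, show that it lies in the central subgroupoid $i(\mathcal{G}^{(0)}\times\mathbb{T})$, and read off the required $z$ from it. The whole argument is just a careful unwinding of the axioms in Definition \ref{twist_def} together with the fact that $i$ and $\pi$ restrict to homeomorphisms of unit spaces.

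First I would check that $\delta$ and $\varepsilon^{-1}$ are composable. Writing $\gamma := \pi(\varepsilon) = \pi(\delta)$, and using that $\pi$ is a groupoid homomorphism, we get $\pi(s(\delta)) = s(\gamma) = s(\pi(\varepsilon)) = \pi(s(\varepsilon))$. Since $\pi$ is injective on $\mathcal{E}^{(0)}$ (it restricts to a homeomorphism of unit spaces), this forces $s(\delta) = s(\varepsilon) = r(\varepsilon^{-1})$, so the product $\eta := \delta\varepsilon^{-1}$ is defined in $\mathcal{E}$. Applying $\pi$ gives $\pi(\eta) = \pi(\delta)\pi(\varepsilon)^{-1} = \gamma\gamma^{-1} = r(\gamma) \in \mathcal{G}^{(0)}$, hence $\eta \in \pi^{-1}(\mathcal{G}^{(0)})$.

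By axiom (4) of Definition \ref{twist_def} we have $\pi^{-1}(\mathcal{G}^{(0)}) = i(\mathcal{G}^{(0)}\times\mathbb{T})$, so there are $x \in \mathcal{G}^{(0)}$ and $z \in \mathbb{T}$ with $\eta = i(x,z)$. Because $i$ restricts to a homeomorphism of unit spaces, $x = r(i(x,z)) = r(\eta) = r(\delta)$; and from $\pi(r(\delta)) = r(\gamma) = \pi(r(\varepsilon))$ we get $r(\delta) = r(\varepsilon)$ under the identification $\mathcal{E}^{(0)} \cong \mathcal{G}^{(0)}$, so in fact $\eta = i(r(\varepsilon),z)$. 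Multiplying on the right by $\varepsilon$ then yields $\delta = \eta\varepsilon = i(r(\varepsilon),z)\varepsilon = z\cdot\varepsilon$, as claimed. I do not expect any genuine obstacle here; the only thing requiring attention is the bookkeeping that identifies the unit-space components correctly via the homeomorphisms $i|_{\mathcal{E}^{(0)}}$ and $\pi|_{\mathcal{E}^{(0)}}$.
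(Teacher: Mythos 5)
Your argument is correct and is exactly the standard one the paper has in mind (the lemma is stated there without proof): form $\delta\varepsilon^{-1}$, note it lies over a unit so axiom (4) places it in $i(\mathcal{G}^{(0)}\times\mathbb{T})$, and identify the unit component via the homeomorphisms of unit spaces. No gaps.
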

 	
 	\begin{notation}
 		Let $\delta,\varepsilon,\gamma \in \mathcal{E}$ with $\pi(\delta) = \pi(\varepsilon) = \pi(\gamma)$. We will use the notation $\delta = z(\delta,\varepsilon) \cdot \varepsilon$, where $z(\delta,\varepsilon)$ is the element $z \in \mathbb{T}$ such that $\delta = z \cdot \varepsilon$. Note that $\overline{z(\delta,\varepsilon)}= z(\varepsilon,\delta)$, and $z(\varepsilon,\delta)z(\delta,\gamma) = z(\varepsilon,\gamma)$.
 	\end{notation}
 	
 	\begin{definition}
 		\label{nor_2-cocyle}
 		A \emph{normalized continuous 2-cocycle} for a topological groupoid is a continuous map $\sigma \colon \mathcal{G}^{(2)} \rightarrow \mathbb{T}$ satisfying the following:
 		\begin{enumerate}
 			\item $\sigma(r(\gamma),\gamma) = \sigma(\gamma, s(\gamma))$ = 1 for all $\gamma \in \mathcal{G}$.
 			\item $\sigma(\alpha,\beta)\sigma(\alpha\beta,\gamma) = \sigma(\beta,\gamma)\sigma(\alpha,\beta\gamma)$ whenever $(\alpha,\beta),(\beta,\gamma) \in \mathcal{G}^{(2)}$.
 		\end{enumerate}
 	\end{definition}
 	
 	\begin{definition}
 		Let $\sigma, \omega$ be two normalized continuous 2-cocycles for $\mathcal{G}$, we say that $\sigma$ is \emph{cohomologous} to $\omega$, if there exists a continuous function $\gamma \colon \mathcal{G} \rightarrow \mathbb{T}$ such that $\sigma(\alpha,\beta)\overline{\omega(\alpha,\beta)} = \gamma(\alpha)\gamma(\beta)\overline{
 			\gamma(\alpha\beta)}$ for all $(\alpha,\beta) \in \mathcal{G}^{(2)}$. \end{definition}

 	\begin{example}
 		Let $\mathcal{G}$ be a locally compact étale Hausdorff groupoid. If $\sigma$ is a continuous normalized 2-cocycle on $\mathcal{G}$, then we can make $\mathcal{G} \times \mathbb{T}$ into a groupoid $\mathcal{E}_\sigma$. The unit space, range and source maps are given as usual, but multiplication is given by
 		$(\alpha,z)(\beta,w) = (\alpha\beta,wz\sigma(\alpha,\beta))$ and inversion is given by
 		$(\alpha,z)^{-1} = (\alpha,\overline{\sigma(\alpha^{-1},\alpha)z})$. The groupoid $\mathcal{E}_\sigma$ is analogous to the Mackey group $G_\sigma$ in Section 3. The set inclusion $i \colon \mathcal{G}^{(0)} \times \mathbb{T} \rightarrow \mathcal{E}_\sigma$ and the projection $\pi \colon \mathcal{E}_\sigma \rightarrow \mathcal{G}$ given by $\pi(\gamma,z) = \gamma$ are groupoid homomorphisms. One can then show that $\mathcal{E}_\sigma$ is a twist over $\mathcal{G}$ with respect to $i$ and $\pi$.
 		
 		We can also recover the cohomology class of $\sigma$ from the twist $\mathcal{E}_\sigma \rightarrow \mathcal{G}$. Let $S$ be any continuous section for $\sigma$, i.e, a continuous map $S \colon \mathcal{G} \rightarrow \mathcal{E}_\sigma$ such that $\pi \circ S = \textrm{id}_\mathcal{G}$. For $(\alpha,\beta) \in \mathcal{G}^{(2)}$ we have $\pi\left(S(\alpha)S(\beta)S(\alpha\beta)^{-1}\right) = r(\alpha)$, and so there is a unique element (dependent on $\alpha$ and $\beta$) $\omega(\alpha,\beta) \in \mathbb{T}$ such that $S(\alpha)S(\beta)S(\alpha\beta)^{-1} = (r(\alpha), \omega(\alpha,\beta))$. The resulting map $\omega \colon \mathcal{G}^{(2)} \rightarrow \mathbb{T}$ is a continuous 2-cocycle. Let $S^\prime$ be another continuous section for $\sigma$, and let $\omega^\prime$ be defined as $\omega$, but with respect to $S^\prime$. Let $b \colon \mathcal{G} \rightarrow \mathbb{T}$ be the map satisfying the equality $S(\alpha)^{-1}S^\prime(\alpha) = (r(\alpha),b(\alpha))$ for all $\alpha \in \mathcal{G}$, this is in fact a 1-cochain. Then $\omega^{-1}\omega^{\prime} \colon \mathcal{G}^{(2)} \rightarrow \mathbb{T}$ given by $(\omega^{-1}\omega^{\prime})(\alpha,\beta) = \omega(\alpha,\beta)^{-1}\omega^{\prime}(\alpha,\beta)$ is equal to the 2-coboundary obtained from the 1-cochain $b$. Thus, the cocycle obtained from different choices of continuous sections $S$ are cohomologous. If we let $S$ be the continuous section given by $S(\gamma) = (\gamma,1)$ for all $\gamma \in \mathcal{G}$, then $\omega = \sigma$. Thus, the cohomology class of $\sigma$ is equal to the cohomology class obtained from any continuous section  $\mathcal{G} \rightarrow \mathcal{E}_\sigma$.

 	\end{example}
 	
 	In general if $\mathcal{E}$ is a twist over $\mathcal{G}$ that admits a continuous section $S \colon \mathcal{G} \rightarrow \mathcal{E}$, then there is a continuous 2-cocycle defined by $S(\alpha)S(\beta)S(\alpha\beta)^{-1} = i\left(s(\alpha),\sigma(\alpha,\beta)\right)$ for all $(\alpha,\beta) \in \mathcal{G}^{(2)}$, and thus an isomorphism $\mathcal{E} \cong \mathcal{E}_\sigma$ that is equivariant for $i$ and $\pi$. In that case, $\mathcal{E}$ is isomorphic to a twist coming from a continuous 2-cocycle. 
 	
 	\subsection{The twisted groupoid $L^p$-operator algebra.}
 	
 	\begin{definition}
 		Let $\mathcal{E}$ be a twist over a locally compact étale groupoid $\mathcal{G}$. We define
 		$$
 		\Sigma_c(\mathcal{G}; \mathcal{E}) = \{f \in C_c(\mathcal{E}) \colon f(z \cdot \varepsilon) = zf(\varepsilon)\, \text{ for all }
 		\varepsilon \in \mathcal{E}\,, z \in \mathbb{T}\}\,.
 		$$
 	\end{definition}
 	
 	Fix $\gamma \in \mathcal{G}$. For any element $\delta \in \pi^{-1}(\gamma)$ we have a homeomorphism $\mathbb{T} \cong \pi^{-1}(\gamma)$ given by $z \mapsto z \cdot \delta$. We define a measure on $\pi^{-1}(\gamma)$ by pulling back the Haar measure on $\mathbb{T}$. This is independent of the choice of $\delta \in \pi^{-1}(\gamma)$ since the Haar measure on $\mathbb{T}$ is rotation invariant. For every $x \in \mathcal{G}^{(0)}$ we endow $\mathcal{E}_x$ ($\mathcal{E}^x$) with the measure $\nu_x$ ($\nu^x$) that agrees with the pulled backed copy of the Haar measure on $\pi^{-1}(\gamma)$ for each $\gamma \in \mathcal{G}_x$ ($\mathcal{G}^x$). Note that $\pi^{-1}(\gamma)$ has measure 1. We have that $\Sigma_c(\mathcal{G}; \mathcal{E})$ has a $*$-algebra structure \cite[Lemma 5.1.9]{2017arXiv171010897S}.
 	
 	\begin{lemma}
 		Let  $\mathcal{E}$ be a twist over a locally compact étale groupoid $\mathcal{G}$. Then, $\Sigma_c(\mathcal{G}; \mathcal{E})$ forms a complex $\ast$-algebra with product given by convolution
 		$$
 		(f \ast g)(\varepsilon) = \int_{\mathcal{E}^{r(\varepsilon)}} f(\gamma)g(\gamma^{-1}\varepsilon) d\nu^{r(\varepsilon)} = \int_{\mathcal{E}_{s(\varepsilon)}} f(\varepsilon\gamma^{-1})g(\gamma) d\nu_{s(\varepsilon)}\,,
 		$$
 		for every $f,g\in \Sigma_c(\mathcal{G}; \mathcal{E})$,
 		and involution given by
 		$$
 		h^{\ast}(\varepsilon) = \overline{h(\varepsilon^{-1})}\,,
 		$$
 		for every $h\in \Sigma_c(\mathcal{G}; \mathcal{E})$.
 		Let $\varepsilon \in \mathcal{E}$, then for any choice of a (not necessarily continuous) section $S\colon\mathcal{G}\to \mathcal{E}$ of $\pi$, we have that
 		\begin{equation}
 			\label{section_conv}    
 			(f \ast g)(\varepsilon) = \sum_{\alpha \in \mathcal{G}^{r(\varepsilon)}}f(S(\alpha))g(S({\alpha})^{-1}\varepsilon) = \sum_{\alpha \in \mathcal{G}^{s(\varepsilon)}}f(\varepsilon S({\alpha})^{-1})g(S(\alpha))\,,
 		\end{equation}
 		for $f,g \in \Sigma_c(\mathcal{G};\mathcal{E})$.
 		There is an isomorphism 
 		\begin{equation}
 			\label{core-id}
 			C_c(\mathcal{G}^{(0)}) \cong \{f \in \Sigma_c(\mathcal{G}, \mathcal{E}) \colon \text{supp}(f) \subseteq i(\mathcal{G}^{0} \times \mathbb{T})\}
 		\end{equation}
 		that sends $f \in C_c(\mathcal{G}^{(0)})$ to $\Tilde{f}$, where $\Tilde{f}(i(x,z))= zf(x)$.
 	\end{lemma}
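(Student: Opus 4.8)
The plan is to reduce every claim to the \emph{section-sum formula} (\ref{section_conv}) and then carry out the standard étale-groupoid computations, tracking the $\mathbb{T}$-equivariance so that the twist never actually obstructs anything. First I would establish (\ref{section_conv}) together with the equality of the two convolution formulas. Fix $f,g\in\Sigma_c(\mathcal{G};\mathcal{E})$ and $\varepsilon\in\mathcal{E}$. Centrality of $i(\mathcal{G}^{(0)}\times\mathbb{T})$ gives $(z\cdot\gamma)^{-1}=\bar{z}\cdot\gamma^{-1}$ for $z\in\mathbb{T}$, so the weight-one conditions yield $f(z\cdot\gamma)\,g\big((z\cdot\gamma)^{-1}\varepsilon\big)=z\bar z\,f(\gamma)g(\gamma^{-1}\varepsilon)=f(\gamma)g(\gamma^{-1}\varepsilon)$; that is, the integrand $\gamma\mapsto f(\gamma)g(\gamma^{-1}\varepsilon)$ is constant along each fibre $\pi^{-1}(\alpha)$, $\alpha\in\mathcal{G}^{r(\varepsilon)}$. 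Since $\nu^{r(\varepsilon)}$ restricts to normalized Haar measure on each $\pi^{-1}(\alpha)\cong\mathbb{T}$ and $\mathcal{G}^{r(\varepsilon)}$ is discrete ($\mathcal{G}$ being étale), the integral over $\mathcal{E}^{r(\varepsilon)}$ collapses to the finite sum $\sum_{\alpha\in\mathcal{G}^{r(\varepsilon)}}f(S(\alpha))g(S(\alpha)^{-1}\varepsilon)$, independent of the section $S$ of $\pi$ by the same invariance; the second expression comes from the bijection $\mathcal{E}_{s(\varepsilon)}\to\mathcal{E}^{r(\varepsilon)}$, $\delta\mapsto\varepsilon\delta^{-1}$, which carries $\nu_{s(\varepsilon)}$ onto $\nu^{r(\varepsilon)}$.

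Next I would check that $\Sigma_c(\mathcal{G};\mathcal{E})$ is closed under $\ast$ and under $h\mapsto h^{\ast}$ and verify the $\ast$-algebra axioms. Equivariance of $f\ast g$ is immediate from (\ref{section_conv}): pushing $z$ through $S(\alpha)^{-1}(z\cdot\varepsilon)=z\cdot(S(\alpha)^{-1}\varepsilon)$ gives $(f\ast g)(z\cdot\varepsilon)=z(f\ast g)(\varepsilon)$. For continuity and compactness of supports I would cover $\pi(\operatorname{supp}f)$ and $\pi(\operatorname{supp}g)$ by finitely many open bisections of $\mathcal{G}$ admitting continuous local sections of $\pi$; on each resulting piece $f\ast g$ is a finite sum of products of continuous functions precomposed with continuous maps, while $\operatorname{supp}(f\ast g)\subseteq\operatorname{supp}(f)\cdot\operatorname{supp}(g)$ is the continuous image of a compact set, hence compact. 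Associativity is then a reindexing of the finite double sums produced by (\ref{section_conv}); the identities $h^{\ast\ast}=h$ and $(f\ast g)^{\ast}=g^{\ast}\ast f^{\ast}$ follow from the substitution $\alpha\mapsto\alpha^{-1}$ in the sums together with conjugation, and conjugate-linearity of $h\mapsto h^{\ast}$ and $\mathbb{C}$-bilinearity of $\ast$ are clear. This establishes the first assertion and (\ref{section_conv}).

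Finally, for the identification (\ref{core-id}): since the range map of $\mathcal{G}$ is a local homeomorphism, $\mathcal{G}^{(0)}$ is open in $\mathcal{G}$, so $i(\mathcal{G}^{(0)}\times\mathbb{T})=\pi^{-1}(\mathcal{G}^{(0)})$ is open in $\mathcal{E}$ and $i$ restricts (using local triviality of the twist) to a homeomorphism $\mathcal{G}^{(0)}\times\mathbb{T}\to\pi^{-1}(\mathcal{G}^{(0)})$. Given $f\in C_c(\mathcal{G}^{(0)})$, set $\tilde f(i(x,z))=zf(x)$ on $\pi^{-1}(\mathcal{G}^{(0)})$ and $\tilde f=0$ off it; then $\tilde f$ is continuous (it is continuous on the open set $\pi^{-1}(\mathcal{G}^{(0)})$, vanishes on the open complement of its support, and these two sets cover $\mathcal{E}$), has compact support $i(\operatorname{supp}f\times\mathbb{T})$, and is $\mathbb{T}$-equivariant since $\tilde f(w\cdot i(x,z))=\tilde f(i(x,wz))=w\tilde f(i(x,z))$, so $\tilde f\in\Sigma_c(\mathcal{G};\mathcal{E})$. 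Conversely, if $h\in\Sigma_c(\mathcal{G};\mathcal{E})$ is supported in $i(\mathcal{G}^{(0)}\times\mathbb{T})$, then $x\mapsto h(i(x,1))$ lies in $C_c(\mathcal{G}^{(0)})$ and equivariance gives $h=\widetilde{h(i(\cdot,1))}$; hence $f\mapsto\tilde f$ is a linear bijection onto the stated subalgebra. It is $\ast$-preserving because $i(x,z)^{-1}=i(x,\bar z)$, and multiplicative because in the sum (\ref{section_conv}) for $(\tilde f_1\ast\tilde f_2)(i(x,z))$ only the term $\alpha=x$ (the unique unit in $\mathcal{G}^{x}$) survives, and with $S(x)=i(x,w)$ it evaluates to $w f_1(x)\cdot\bar w z f_2(x)=z\,f_1(x)f_2(x)=\widetilde{f_1f_2}(i(x,z))$, while $\tilde f_1\ast\tilde f_2$ vanishes off $\pi^{-1}(\mathcal{G}^{(0)})$ by the same support argument.

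I expect the only genuinely delicate point to be the continuity and compact support of $f\ast g$ (and, to a lesser extent, the bookkeeping in the associativity reindexing); all the remaining steps amount to routine cancellations of $\mathbb{T}$-weights once (\ref{section_conv}) is in hand.
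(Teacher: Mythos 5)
Your argument is correct and is the standard one: the paper itself gives no proof of this lemma, deferring to \cite[Lemma 5.1.9]{2017arXiv171010897S}, and your reduction of everything to the fibrewise-constancy of $\gamma\mapsto f(\gamma)g(\gamma^{-1}\varepsilon)$ and the resulting section-sum formula is exactly how that reference proceeds. All the delicate points you flag (continuity and compact support of $f\ast g$ via finitely many bisections with continuous local sections, and the cancellation of the $\mathbb{T}$-weights in the associativity reindexing) are handled correctly.
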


 	\begin{remark}
 		Each twist $\mathcal{E}$ over $\mathcal{G}$ determines a complex line bundle $\Tilde{\mathcal{E}}$ over $\mathcal{G}$, where $\Tilde{\mathcal{E}} = \mathbb{C} \times \mathcal{E} / \sim$ with $(z\overline{t},\gamma) \sim (z,t\cdot \gamma)$. If we denote the corresponding  equivalence class by $[z,\gamma]$, then $\Tilde{\mathcal{E}}$ is a line bundle over $\mathcal{G}$ with respect to the fiber map $p\colon \Tilde{\mathcal{E}} \rightarrow \mathcal{G}$ given by $p([z,\gamma]) \mapsto \pi(\gamma)$. We can regard elements of $\Sigma_c(\mathcal{G}; \mathcal{E})$ as sections of $\Tilde{\mathcal{E}}$, where the corresponding section of $f \in \Sigma_c(\mathcal{G}; \mathcal{E})$ is given by 
 		$$
 		\gamma \mapsto [f(\Tilde{\gamma}),\Tilde{\gamma}]
 		$$
 		for any choice of  $\Tilde{\gamma} \in \pi^{-1}(\gamma)$. This is well defined due to the equivalence relation.
 	\end{remark}
 	
 	Let $p\in[1,\infty)$. We denote the $p$-integrable $\mathbb{T}$-equivariant functions on $\mathcal{E}_x$ as $L^{p}(\mathcal{G}_x; \mathcal{E}_x)$. For $p=\infty$, we denote the $\mathbb{T}$-equivariant supremum bounded functions by $L^\infty(\mathcal{G}_x; \mathcal{E}_x)$.
 	
 	\begin{lemma}
 		\label{lp}
 		Let $p\in[1,\infty]$, let $\mathcal{E}$ be a twist over a locally compact étale groupoid $\mathcal{G}$, and let $x \in \mathcal{G}^{(0)}$. Then $L^{p}(\mathcal{G}_x; \mathcal{E}_x)$ and  $l^{p}(\mathcal{G}_x)$ are isometrically isomorphic.
 	\end{lemma}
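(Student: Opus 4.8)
The plan is to exploit that $\mathcal{G}$ is étale, so the fibre $\mathcal{G}_x=s^{-1}(x)$ is discrete, whence $\mathcal{E}_x$ decomposes as the disjoint union $\bigsqcup_{\gamma\in\mathcal{G}_x}\pi^{-1}(\gamma)$ of clopen subsets, each $\pi^{-1}(\gamma)$ being a copy of $\mathbb{T}$ carrying (the pullback of) normalized Haar measure, so in particular of total mass $1$. The first step is a fibrewise computation: fix $\gamma\in\mathcal{G}_x$ and $\delta\in\pi^{-1}(\gamma)$; the homeomorphism $\mathbb{T}\to\pi^{-1}(\gamma)$, $z\mapsto z\cdot\delta$, carries $\nu_x|_{\pi^{-1}(\gamma)}$ to Haar measure on $\mathbb{T}$, and for $f\in L^p(\mathcal{G}_x;\mathcal{E}_x)$ the $\mathbb{T}$-equivariance gives $f(z\cdot\delta)=zf(\delta)$. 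Since $|z|=1$ this yields
$$\int_{\pi^{-1}(\gamma)}|f|^p\,d\nu_x=\int_{\mathbb{T}}|zf(\delta)|^p\,dz=|f(\delta)|^p$$
for $p<\infty$, and likewise $\|f\|_{L^\infty(\pi^{-1}(\gamma))}=|f(\delta)|$ for $p=\infty$; in both cases the value is independent of the representative $\delta$. Summing (resp.\ taking suprema) over $\mathcal{G}_x$ gives $\|f\|_p=\|(f(\delta_\gamma))_\gamma\|_{\ell^p(\mathcal{G}_x)}$ for any choice of representatives $\delta_\gamma\in\pi^{-1}(\gamma)$; in particular $f\in L^p(\mathcal{G}_x;\mathcal{E}_x)$ precisely when these values form an element of $\ell^p(\mathcal{G}_x)$.

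Next I would choose a section. Every $\pi^{-1}(\gamma)$ is nonempty and $\mathcal{G}_x$ is discrete, so there is an (automatically continuous) section $S\colon\mathcal{G}_x\to\mathcal{E}_x$ of $\pi$, and I define
$$\Phi\colon L^p(\mathcal{G}_x;\mathcal{E}_x)\to\ell^p(\mathcal{G}_x),\qquad \Phi(f)(\gamma)=f(S(\gamma)).$$
By the norm formula of the previous paragraph, $\Phi$ is a well-defined linear isometry. For surjectivity, given $c=(c_\gamma)_\gamma\in\ell^p(\mathcal{G}_x)$ I would define $f_c\colon\mathcal{E}_x\to\mathbb{C}$ by $f_c(\varepsilon)=z\big(\varepsilon,S(\pi(\varepsilon))\big)\,c_{\pi(\varepsilon)}$, using the notation $\varepsilon=z(\varepsilon,S(\pi(\varepsilon)))\cdot S(\pi(\varepsilon))$. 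On the clopen fibre over $\gamma$ this is just the map $z\mapsto zc_\gamma$, hence continuous and measurable; from $\pi(z'\cdot\varepsilon)=\pi(\varepsilon)$ together with $z(z'\cdot\varepsilon,\delta)=z'\,z(\varepsilon,\delta)$ one gets $f_c(z'\cdot\varepsilon)=z'f_c(\varepsilon)$, so $f_c$ is $\mathbb{T}$-equivariant; and $\|f_c\|_p=\|c\|_p<\infty$ by the norm formula, so $f_c\in L^p(\mathcal{G}_x;\mathcal{E}_x)$ with $\Phi(f_c)=c$. This proves the lemma, and the same argument with $\mathcal{E}^x$, $\nu^x$, $\mathcal{G}^x$ in place of $\mathcal{E}_x$, $\nu_x$, $\mathcal{G}_x$ gives $L^p(\mathcal{G}^x;\mathcal{E}^x)\cong\ell^p(\mathcal{G}^x)$ as well.

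I do not anticipate a genuine obstacle here: the only points needing care are that the fibre measure really is normalized Haar measure (which is exactly how $\nu_x$ was constructed) and that $f_c$ is a bona fide $\mathbb{T}$-equivariant measurable function, both of which are immediate from the discreteness of $\mathcal{G}_x$. Equivalently, one may phrase the whole argument bundle-theoretically: $L^p(\mathcal{G}_x;\mathcal{E}_x)$ is the space of $\ell^p$-sections of the line bundle $\tilde{\mathcal{E}}$ restricted to the discrete set $\mathcal{G}_x$, which is trivial; but fixing a trivialization is precisely fixing the section $S$, so this is the same proof.
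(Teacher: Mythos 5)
Your argument is correct and is essentially the paper's proof: both fix a (not necessarily continuous) section $S$ of $\pi$, use $\mathbb{T}$-equivariance to reduce each fibre integral over $\pi^{-1}(\gamma)\cong\mathbb{T}$ to the single value $|f(S(\gamma))|^p$, and exhibit the mutually inverse maps $f\mapsto f\circ S$ and $c\mapsto\big(\varepsilon\mapsto z(\varepsilon,S(\pi(\varepsilon)))c_{\pi(\varepsilon)}\big)$. The only difference is cosmetic — you define the isometry from $L^p(\mathcal{G}_x;\mathcal{E}_x)$ to $\ell^p(\mathcal{G}_x)$ and invert it, whereas the paper starts from $\ell^p(\mathcal{G}_x)$ — so there is nothing to add.
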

 	
 	\begin{proof}
 		Let $S \colon \mathcal{G} \rightarrow \mathcal{E}$ be a section of $\pi$. Then for every $\alpha \in \mathcal{E}_x$ there exists $z \in \mathbb{T}$ such that $\alpha = S({\pi(\alpha)})i(x,z)$.
 		Let $f \in l^p(\mathcal{G}_x)$. Then define the function $\widetilde{f} \in L^{p}(\mathcal{G}_x; \mathcal{E}_x)$ as $\widetilde{f}(\alpha):= zf(\pi(\alpha))$, where $z \in \mathbb{T}$ is given by $\alpha = S(\pi(\alpha))i(x,z)$ (also denoted $z(\alpha,S(\pi(\alpha)))$. For $p\in[1,\infty)$  we have that
 		$$
 		\int_{\mathcal{E}_x} |\Tilde{f}(\alpha)| d\nu_x = \sum_{\gamma \in \mathcal{G}_x} \int_{\mathbb{T}} |\Tilde{f}(z\cdot S(\gamma))|dz = \sum_{\gamma \in \mathcal{G}_x}|f(\gamma)| \int_{\mathbb{T}} |z| dz = \sum_{\gamma \in \mathcal{G}_x}|f(\gamma)|
 		$$
 		and for $p=\infty$ we have that
 		$$
 		\sup_{\alpha \in \mathcal{E}_x} |\Tilde{f}(\alpha)| = \sup_{\gamma \in \mathcal{G}_x}\sup_{\alpha \in \pi^{-1}(\gamma)} |zf(\gamma)| = \sup_{\gamma \in \mathcal{G}_x} |f(\gamma)|\,.
 		$$
 		It follows that the map is isometric for all $p\in[1,\infty]$.
 		The map $f \mapsto \widetilde{f}$ is clearly injective. To show surjectivity, let $g \in L^{p}(\mathcal{G}_x; \mathcal{E}_x)$. Given $\alpha\in \mathcal{E}_x$ there exists  $z \in \mathbb{T}$ such that $\alpha = z \cdot S(\pi(\alpha))$. Then  we have that $g(\alpha) = zg(S(\pi(\alpha)))$ and so the function $f\in l^p(\mathcal{G}_x)$ given by $\gamma \mapsto g(S(\gamma))$ for all $\gamma \in \mathcal{G}_x$ is such that $g(\alpha) = \Tilde{f}(\alpha)$ for all $\alpha \in \mathcal{E}_x$. Thus, the map is an isomorphism and it follows that $L^{p}(\mathcal{G}_x; \mathcal{E}_x) \cong l^{p}(\mathcal{G}_x)$.
 	\end{proof}
 	
 	Let $\mathcal{G}$ be an étale groupoid and $\mathcal{E}$ be a twist over $\mathcal{G}$. Given $x\in \mathcal{G}^{(0)}$, we define the left regular representation $\lambda_x \colon \Sigma_c(\mathcal{G}; \mathcal{E}) \rightarrow \mathcal{B}(L^{p}(\mathcal{G}_x; \mathcal{E}_x))$ by extension of the convolution formula, that is
 	$$
 	\lambda_x(f)\xi(\varepsilon) = \int_{\mathcal{E}_{x}} f(\varepsilon\gamma^{-1})\xi(\gamma) d\nu_{x} \,,
 	$$
 	for every $f\in \Sigma_c(\mathcal{G}; \mathcal{E})$, $\xi\in L^{p}(\mathcal{G}_x; \mathcal{E}_x)$ and $\varepsilon \in \mathcal{E}_x$. We have that 
 	$$
 	\norm{\lambda_x(f)} \leq \norm{f}_I = \max \{ \sup_{x}\int_{\mathcal{E}_x}|f(\alpha)|d\nu_x, \sup_{x}\int_{\mathcal{E}_x}|f^\ast(\alpha)|d\nu_x \}
 	\,,$$
 	so $\lambda_x$ is bounded.
 	To show this estimate, choose any section $S\colon\mathcal{E}_x\to \mathcal{G}_x$ of $\pi$. We then have that
 	\begin{align*}
 		\lambda_x(f)\xi(\varepsilon) = \int_{\mathcal{E}_{x}} f(\varepsilon\gamma^{-1})\xi(\gamma) d\nu_{x}  = \sum_{\alpha \in \mathcal{G}_{x}}f(\varepsilon S(\alpha^{-1}))\xi(S(\alpha))
 	\end{align*}
 	
 	and
 	\begin{align*}
 		\int_{\mathcal{E}_x} |f(\alpha)| d\nu_x = \sum_{\gamma \in \mathcal{G}_x} \int_{\mathbb{T}} |f(z\cdot S(\gamma))| dz = \sum_{\gamma\in \mathcal{G}_x} |f(S(\gamma))|\,,
 	\end{align*}
 	and since $|f(\gamma_1)| = |f(\gamma_2)|$ for all $\gamma_1,\gamma_2 \in \mathcal{E}_x$ with $\pi(\gamma_1) = \pi(\gamma_2)$,  we have by Lemma \ref{lp} that 
 	
 	$$
 	\sup_{\gamma \in \mathcal{E}_x} |f(\gamma)| = \sup_{\alpha \in \mathcal{G}_x} |f(S(\alpha))|\,.
 	$$
 	The estimate $\norm{\lambda_x(f)} \leq \norm{f}_I$ then follows as in the proof of  \cite[Proposition 4.2]{choi2021rigidity}.
 	

 	\begin{definition}
 		Let $p\in[1,\infty)$, and let $\mathcal{E}$ be a twist over a locally compact étale groupoid $\mathcal{G}$. We define \emph{the reduced twisted $L^{p}$-operator algebra}, denoted $F^p_{\lambda}(\mathcal{G};\mathcal{E})$, as the completion of $\Sigma_c(\mathcal{G}; \mathcal{E})$ in the norm
 		$$
 		\norm{f}_\lambda = \text{sup}_{x\in \mathcal{G}^{(0)}}\norm{\lambda_x(f)}\,.
 		$$
 	\end{definition}

 	That $F^p_{\lambda}(\mathcal{G};\mathcal{E})$ is an $L^p$-operator algebra follows since $L^p(\mathcal{G}_x;\mathcal{E}_x)$ is an $L^{p}$-space, and then $\lambda:=\bigoplus_{x\in \mathcal{G}^{(0)}} \lambda_x$ is an isometric representation of $F^p_{\lambda}(\mathcal{G};\mathcal{E})$ on the $L^p$-space $\bigoplus_{x\in \mathcal{G}^{(0)}} L^p(\mathcal{G}_x;\mathcal{E}_x)$. Note that if $\mathcal{G}^{(0)}$ is compact, then  $F^p_{\lambda}(\mathcal{G};\mathcal{E})$ is unital with unit $\lambda(1_{\mathcal{G}^{(0)}})$.
 	
 	\begin{example}
 		Let $\mathcal{G}$ be a locally compact étale groupoid. For the trivial twist $\mathcal{E} = \mathcal{G} \times \mathbb{T}$, the continuous map $\alpha \mapsto (\alpha,1)$ is a section for $\pi \colon \mathcal{G}\times \mathbb{T} \rightarrow \mathcal{G}$. The cocycle obtained from this section is the trivial cocyle, and it follows from (\ref{section_conv}) that $F^p_{\lambda}(\mathcal{G};\mathcal{G} \times \mathbb{T}) \cong F^p_{\lambda}(\mathcal{G})$, where $F^p_{\lambda}(\mathcal{G})$ is the reduced groupoid $L^p$-operator algebra defined in \cite{choi2021rigidity}. Now, let $\sigma$ be a continuous 2-cocycle for $\mathcal{G}$ and let $\mathcal{E}_\sigma$ be the twist over $\mathcal{G}$ constructed from $\sigma$.  We can then define the  $\sigma$-twisted convolution on $C_c(\mathcal{G})$ given by
 		$$
 		(f \ast_\sigma g)(\gamma) = \sum_{\alpha\in \mathcal{G}_s(\gamma)} f(\gamma\alpha^{-1})g(\alpha)\sigma(\gamma\alpha^{-1},\alpha) = \sum_{\alpha \in \mathcal{G}^{r(\gamma)}} f(\alpha)g(\alpha^{-1}\gamma)\sigma(\alpha,\alpha^{-1}\gamma)
 		$$
 		for $f,g \in C_c(\mathcal{G})$ and all $\gamma \in \mathcal{G}$. We denote the complex algebra formed by $\sigma$-twisted convolution by $C_c(\mathcal{G},\sigma)$. We can also define an involution product on $C_c(\mathcal{G},\sigma)$ given by
 		$$
 		f^{\ast_{\sigma}}(\gamma) = \overline{\sigma(\gamma^{-1},\gamma)f(\gamma^{-1})}\,.
 		$$
 		
 		For $p\in [1,\infty]$,
 		we define the mapping $\lambda_x^{\sigma} \colon C_c(G,\sigma) \rightarrow l^{p}(\mathcal{G}_x)$ to be the unique map such that $\lambda_x^{\sigma}(f) = f \ast_\sigma \xi$ for all $\xi \in C_c(\mathcal{G}_x)$. Then, we define the \emph{reduced $\sigma$-twisted norm} on $C_c(\mathcal{G},\sigma)$ by  
 		$$
 		\norm{f}_{\lambda^{\sigma}} = \sup \{\norm{\lambda_{x}^{\sigma}(f)}_{p} \colon x \in \mathcal{G}^{0} \}\,.
 		$$
 		We denote the completion of $C_c(\mathcal{G},\sigma)$ with respect to $\norm{\cdot}_{\lambda^{\sigma}}$ by $F^p_{\lambda}(\mathcal{G}, \sigma)$.

 		We have that 
 		$F^p_{\lambda}(\mathcal{G};\mathcal{E}_{\sigma})$ and $F^p_{\lambda}(\mathcal{G}, \sigma)$ are isometrically isomorphic. There is an algebra  isomorphism $\Sigma_c(\mathcal{G};\mathcal{E}_{\sigma}) \rightarrow C_c(\mathcal{G}, \sigma)$ that sends $f \mapsto \Tilde{f}$, where $\Tilde{f}(z,\gamma) = zf(\gamma)$ for every $(z,\gamma)\in \mathcal{E}_{\sigma}$. One can also show that $\norm{f}_{\lambda^\sigma} = \| \Tilde{f}\|_\lambda$, which means that the isomorphism extends isometrically to the closures.
 	\end{example}

 	\begin{remark}
 		Let $\beta,\varepsilon \in \mathcal{E}$, we define the point mass function on $L^{p}(\mathcal{G}_x;\mathcal{E}_x)$ as follows: $\delta_\beta(\varepsilon)  = z$ with $\pi(\beta) = \pi(\varepsilon)$ and $\varepsilon =z\cdot \beta$, otherwise $\delta_\beta(\varepsilon) = 0$.
 	\end{remark}
 	\begin{lemma}
 		\label{norm}
 		Let $p\in[1,\infty)$, and let $\mathcal{E}$ be a twist over a locally compact étale groupoid $\mathcal{G}$. Let $f \in \Sigma_c(\mathcal{G}; \mathcal{E})$. Then
 		$$
 		\norm{f}_{\infty} \leq \norm{f}_\lambda \leq \norm{f}_I.
 		$$
 		
 		Furthermore, if $B$ is a bisection, then for every $f$ with $\text{supp}(f) \subseteq \pi^{-1}(B)$ we have that $\norm{f}_\infty = \norm{f}_\lambda$. 
 	\end{lemma}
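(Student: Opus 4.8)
The plan is to treat the three inequalities separately. The middle one, $\norm{f}_\lambda\le\norm{f}_I$, has already been recorded above as the estimate $\norm{\lambda_x(f)}\le\norm{f}_I$ proved just before the definition of $F^p_\lambda(\mathcal{G};\mathcal{E})$, so what remains is $\norm{f}_\infty\le\norm{f}_\lambda$ in general, and $\norm{f}_\lambda\le\norm{f}_\infty$ under the bisection hypothesis.

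For $\norm{f}_\infty\le\norm{f}_\lambda$, I would fix $\varepsilon_0\in\mathcal{E}$ and put $x=s(\varepsilon_0)$, so that $\varepsilon_0\in\mathcal{E}_x$, and apply $\lambda_x(f)$ to the point mass $\delta_{u_x}$ at the unit $u_x=i(x,1)$. Since $\pi^{-1}(x)$ carries $\nu_x$-mass $1$ and is parametrised by $z\mapsto i(x,z)=z\cdot u_x$, on which $\delta_{u_x}$ takes the value $z$, and since $\varepsilon_0\,i(x,z)^{-1}=\overline{z}\cdot\varepsilon_0$ (centrality of $i(\mathcal{G}^{(0)}\times\mathbb{T})$) together with $f(\overline{z}\cdot\varepsilon_0)=\overline{z}f(\varepsilon_0)$, the integrand in $\lambda_x(f)\delta_{u_x}(\varepsilon_0)=\int_{\mathcal{E}_x}f(\varepsilon_0\gamma^{-1})\delta_{u_x}(\gamma)\,d\nu_x$ collapses to the constant $f(\varepsilon_0)$; the same computation gives $\lambda_x(f)\delta_{u_x}=f|_{\mathcal{E}_x}$. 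Using $\norm{\delta_{u_x}}_p=1$ and that, on the discrete index set $\mathcal{G}_x$, the supremum norm is dominated by the $L^p$-norm (Lemma \ref{lp}, $p<\infty$), we obtain
$$
|f(\varepsilon_0)|\le\sup_{\alpha\in\mathcal{E}_x}|f(\alpha)|\le\norm{\lambda_x(f)\delta_{u_x}}_p\le\norm{\lambda_x(f)}\,\norm{\delta_{u_x}}_p=\norm{\lambda_x(f)}\le\norm{f}_\lambda\,.
$$
Taking the supremum over $\varepsilon_0\in\mathcal{E}=\bigcup_{x\in\mathcal{G}^{(0)}}\mathcal{E}_x$ yields $\norm{f}_\infty\le\norm{f}_\lambda$.

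For the bisection statement it is enough to show $\norm{\lambda_x(f)}\le\norm{f}_\infty$ for every $x\in\mathcal{G}^{(0)}$ when $\text{supp}(f)\subseteq\pi^{-1}(B)$; the idea is that convolution by such an $f$ is a weighted partial translation on $\ell^p(\mathcal{G}_x)$. Fix a section $S$ and use $\lambda_x(f)\xi(\varepsilon)=\sum_{\alpha\in\mathcal{G}_x}f(\varepsilon S(\alpha^{-1}))\xi(S(\alpha))$. The $\alpha$-term vanishes unless $\pi(\varepsilon)\alpha^{-1}\in B$; since $r$ and $s$ are injective on $B$, hence on $B^{-1}$, for each $\varepsilon$ there is at most one index with this property, call it $\alpha_\varepsilon$, and the same injectivity shows that $\beta\mapsto\alpha_{S(\beta)}$ is an injection from a subset of $\mathcal{G}_x$ into $\mathcal{G}_x$. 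Consequently, for $\xi\in L^p(\mathcal{G}_x;\mathcal{E}_x)$,
$$
\norm{\lambda_x(f)\xi}_p^p=\sum_{\beta\in\mathcal{G}_x}\bigl|f\bigl(S(\beta)S(\alpha_{S(\beta)}^{-1})\bigr)\xi\bigl(S(\alpha_{S(\beta)})\bigr)\bigr|^p\le\norm{f}_\infty^p\sum_{\beta\in\mathcal{G}_x}\bigl|\xi\bigl(S(\alpha_{S(\beta)})\bigr)\bigr|^p\le\norm{f}_\infty^p\,\norm{\xi}_p^p\,,
$$
where the first inequality uses that $|f|$ is $\mathbb{T}$-invariant (so the bound by $\norm{f}_\infty$ does not depend on $S$), and the last uses the injectivity of $\beta\mapsto\alpha_{S(\beta)}$ together with $\norm{\xi}_p^p=\sum_{\alpha\in\mathcal{G}_x}|\xi(S(\alpha))|^p$ from Lemma \ref{lp}. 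Thus $\norm{f}_\lambda\le\norm{f}_\infty$, and combined with the first part, $\norm{f}_\lambda=\norm{f}_\infty$.

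The first inequality is routine once the identity $\lambda_x(f)\delta_{u_x}=f|_{\mathcal{E}_x}$ is in place. I expect the bisection part to be where attention is needed: one must keep careful track of sources, ranges and composability when counting the surviving terms of the convolution sum, derive the injectivity of $\beta\mapsto\alpha_{S(\beta)}$ from the defining property of a bisection, and check that the various $\mathbb{T}$-phases produced by the $\mathbb{T}$-equivariance of $f$ and by the choice of section $S$ never affect the estimates — which holds precisely because only $|f|$, a $\mathbb{T}$-invariant quantity, enters, and this is exactly what legitimises bounding everything by $\norm{f}_\infty$.
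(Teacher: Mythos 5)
Your proof is correct and follows essentially the same route as the paper: the middle inequality is the already-established bound $\norm{\lambda_x(f)}\le\norm{f}_I$, and the lower bound is obtained exactly as in the paper by evaluating $\lambda_x(f)$ on the unit point mass and using $\norm{\cdot}_\infty\le\norm{\cdot}_p$ on $\ell^p(\mathcal{G}_x)$. For the bisection statement the paper simply notes that $\norm{f}_I=\norm{f}_\infty$ and sandwiches, whereas you prove the operator bound $\norm{\lambda_x(f)}\le\norm{f}_\infty$ directly; both arguments rest on the same observation that the bisection hypothesis leaves at most one surviving term in the convolution sum, so the content is the same.
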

 	\begin{proof}
 		Let $f \in \Sigma_c(\mathcal{G}; \mathcal{E})$. Since $\norm{\lambda_x(f)} \leq \norm{f}_I$, the second inequality follows immediately. Let $\varepsilon \in \mathcal{E}$. For first inequality, we need to show that $|f(\varepsilon)| \leq \norm{f}_\lambda$. Set $x = s(\varepsilon)$, then 
 		\begin{align*}
 			\norm{f}_\lambda \geq \norm{\lambda_x(f)} \geq \norm{\lambda_x(f)\delta_x} = \|\sum_{\gamma \in \mathcal{G}_x} f(\gamma)\delta_\gamma\|_p \geq |f(\varepsilon)|\,.
 		\end{align*}
 		If $\textrm{supp}(f) \subseteq \pi^{-1}(B)$ for a bisection $B$, then a quick manipulation of terms shows that $\norm{f}_I = \norm{f}_\infty$, and it follows that $\norm{f}_\infty = \norm{f}_\lambda$. 
 	\end{proof}
 	
 	\begin{remark}
 		\label{pm}
 		Note that $\mathcal{G}^{(0)}$ is a bisection, and so for all $f \in C_c(G^{(0)})$ the equality $\norm{f}_I = \norm{f}_\infty$ holds.
 	\end{remark}
 	
 	For the rest of the section fix $p\in[1,\infty)$, a twist $\mathcal{E}$ over an étale groupoid $\mathcal{G}$ and a section $S:\mathcal{G}\to \mathcal{E}$ of $\pi$.

 	The identity map $\Sigma_c(\mathcal{G}; \mathcal{E}) \rightarrow C_0(\mathcal{E})$ extends to a linear contractive map $j \colon F^p_{\lambda}(\mathcal{G};\mathcal{E}) \rightarrow C_0(\mathcal{E})$ by the previous lemma. Given $a \in F^{p}_{\lambda}(\mathcal{G};\mathcal{E}_x)$ we will write $j_a$ for $j(a) \in C_0(\mathcal{E})$.
 	
 	Now let $q$ be the dual exponent of $p$, i.e,  $\frac{1}{p} + \frac{1}{q} = 1$ when $p\neq 1$ and $q=\infty$ when $p=1$. For $x \in \mathcal{G}^{(0)}$, using Proposition \ref{lp} we can identify the dual of $L^{p}(\mathcal{G}_{x};\mathcal{E}_x)$ with $L^{q}(\mathcal{G}_{x};\mathcal{E}_x)$, with the dual pairing defined as
 	$$\langle \xi\,,\eta \rangle = \sum_{\gamma\in \mathcal{G}_x}\xi(S(\gamma))\overline{\eta(S(\gamma))}\,,$$
 	for $\xi \in L^{p}(\mathcal{G}_{x};\mathcal{E}_x)$ and $\eta \in L^{q}(\mathcal{G}_{x};\mathcal{E}_x)$.  
 	
 	
 	\begin{proposition}
 		\label{map}
 		The map $j \colon F^{p}_{\lambda}(\mathcal{G};\mathcal{E}) \rightarrow C_0(\mathcal{E})$ is injective and we have
 		$$
 		j_a(\alpha) = \langle \lambda_{s(\alpha)}(a)(\delta_{s(\alpha)})\,,\delta_\alpha\rangle \,,
 		$$
 		for all $a \in F^{p}_{\lambda}(\mathcal{G};\mathcal{E})$ and all $\alpha \in \mathcal{E}$.
 	\end{proposition}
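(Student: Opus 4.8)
The plan is to establish the displayed formula first on the dense subalgebra $\Sigma_c(\mathcal{G};\mathcal{E})$, then extend it to all of $F^p_\lambda(\mathcal{G};\mathcal{E})$ by continuity, and finally read off injectivity of $j$ from the formula. The key auxiliary tool will be a family of right-translation operators intertwining the representations $\lambda_x$ for different units $x$.

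First I would verify the identity for $f\in\Sigma_c(\mathcal{G};\mathcal{E})$, where $j_f=f$ by construction, so it suffices to compute $\langle\lambda_{s(\alpha)}(f)(\delta_{s(\alpha)}),\delta_\alpha\rangle$. Writing $x=s(\alpha)$ and unwinding the definition of the point mass $\delta_x$ together with the $\mathbb{T}$-equivariance of $f$ and the centrality of $i(\mathcal{G}^{(0)}\times\mathbb{T})$, one finds $\lambda_x(f)(\delta_x)(\varepsilon)=f(\varepsilon)$ for every $\varepsilon\in\mathcal{E}_x$ (the fibre $\pi^{-1}(x)$ carries a probability measure, so the integral collapses). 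Using that $\delta_{S(\gamma)}(S(\gamma'))$ vanishes unless $\gamma=\gamma'$, in which case it equals $1$, the dual pairing reduces to $\langle\xi,\delta_{S(\gamma)}\rangle=\xi(S(\gamma))$ for $\xi\in L^p(\mathcal{G}_x;\mathcal{E}_x)$, and hence $\langle\lambda_x(f)(\delta_x),\delta_\alpha\rangle=f(\alpha)=j_f(\alpha)$, as wanted. For a general $a\in F^p_\lambda(\mathcal{G};\mathcal{E})$, pick $f_n\in\Sigma_c(\mathcal{G};\mathcal{E})$ with $\|f_n-a\|_\lambda\to0$. Then $j$ is contractive for the sup norm on $C_0(\mathcal{E})$ by Lemma \ref{norm}, so $j_{f_n}\to j_a$ uniformly; and $\|\lambda_x(f_n-a)\|\le\|f_n-a\|_\lambda\to0$ for each $x$, so $\lambda_x(f_n)(\delta_x)\to\lambda_x(a)(\delta_x)$ in $L^p(\mathcal{G}_x;\mathcal{E}_x)$, whence the pairings with the norm-one vectors $\delta_\alpha$ converge. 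Passing to the limit in $j_{f_n}(\alpha)=\langle\lambda_{s(\alpha)}(f_n)(\delta_{s(\alpha)}),\delta_\alpha\rangle$ yields the formula for $a$.

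For injectivity, suppose $j_a=0$. For fixed $x\in\mathcal{G}^{(0)}$ and every $\gamma\in\mathcal{G}_x$ the formula together with $\langle\xi,\delta_{S(\gamma)}\rangle=\xi(S(\gamma))$ gives $\lambda_x(a)(\delta_x)(S(\gamma))=j_a(S(\gamma))=0$, so $\lambda_x(a)(\delta_x)=0$ for all $x$. Now, for $\beta\in\mathcal{E}_x$ with $y:=r(\beta)$, define $R_\beta\colon L^p(\mathcal{G}_y;\mathcal{E}_y)\to L^p(\mathcal{G}_x;\mathcal{E}_x)$ by $(R_\beta\xi)(\varepsilon)=\xi(\varepsilon\beta^{-1})$. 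The map $\gamma\mapsto\gamma\pi(\beta)^{-1}$ is a bijection $\mathcal{G}_x\to\mathcal{G}_y$, so $R_\beta$ is a surjective isometry; a change of variables $\zeta=\eta\beta$ in the convolution integral, using $(\eta\beta)^{-1}=\beta^{-1}\eta^{-1}$, $\eta\beta\beta^{-1}=\eta$, centrality, and rotation-invariance of the fibre measures, shows $R_\beta\circ\lambda_y(g)=\lambda_x(g)\circ R_\beta$ for all $g\in\Sigma_c(\mathcal{G};\mathcal{E})$, hence for all $g\in F^p_\lambda(\mathcal{G};\mathcal{E})$; and $R_\beta\delta_y=\delta_\beta$. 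Therefore $\lambda_x(a)(\delta_\beta)=R_\beta\lambda_y(a)(\delta_y)=0$ for every $\beta\in\mathcal{E}_x$. Since the point masses $\{\delta_\beta:\beta\in\mathcal{E}_x\}$ have dense linear span in $L^p(\mathcal{G}_x;\mathcal{E}_x)$ (for each $\gamma\in\mathcal{G}_x$ a single $\beta\in\pi^{-1}(\gamma)$ recovers the whole fibre by equivariance), we conclude $\lambda_x(a)=0$ for every $x$; as $\bigoplus_{x}\lambda_x$ is isometric, $a=0$.

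The part I expect to require the most care is the intertwining identity $R_\beta\lambda_y(g)=\lambda_x(g)R_\beta$ together with the verification that $R_\beta$ is a well-defined surjective isometry: both hinge on a bookkeeping argument over the fibred space $\mathcal{E}_x$, namely that right multiplication by $\beta$ matches up the fibrewise Haar measures and the discrete base measures via $\gamma\mapsto\gamma\pi(\beta)^{-1}$, and that rewriting the convolution kernel is legitimate. Everything else is routine density and continuity, essentially as in the untwisted case.
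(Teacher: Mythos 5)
Your proof is correct and follows essentially the same route as the paper: verify the formula for $f\in\Sigma_c(\mathcal{G};\mathcal{E})$ by unwinding the dual pairing and the fibrewise Haar integral (the $\mathbb{T}$-integral collapses by equivariance), then extend by continuity using $\norm{\cdot}_\infty\leq\norm{\cdot}_\lambda$. Your right-translation intertwiners $R_\beta$ are in substance the paper's Lemma \ref{help}; the paper's own proof leaves the injectivity deduction implicit, so your explicit argument via $\lambda_x(a)\delta_\beta=R_\beta\lambda_{r(\beta)}(a)\delta_{r(\beta)}$ and density of the point masses is a completion of the same approach rather than a divergence from it.
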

 	
 	\begin{proof}
 		For $f \in \Sigma_c(\mathcal{G}; \mathcal{E})$ and $\alpha\in \mathcal{E}$ we have that 
 		\begin{align*}
 			j_f(\alpha) & = \langle \lambda_{s(\alpha)}(f)(\delta_{s(\alpha)})\,,\delta_\alpha\rangle = \sum_{\gamma\in \mathcal{G}_x}\lambda_{s(\alpha)}(f)(\delta_{s(\alpha)})(S(\gamma))\overline{\delta_\alpha(S(\gamma))}\\
 			& =\lambda_{s(\alpha)}(f)(\delta_{s(\alpha)})(S(\pi(\alpha)))\overline{\delta_\alpha(S(\pi(\alpha)))}\\
 			& =\left( \int_{\mathcal{E}_{s(\alpha)}}f(S(\pi(\alpha))\eta^{-1})\delta_{s(\alpha)}(\eta)d\nu_{s(\alpha)} \right) \overline{\delta_\alpha(S(\pi(\alpha)))} \\
 			& =\left( \int_{\mathbb{T}}f(S(\pi(\alpha))(\overline{z}\cdot s(\alpha)))\delta_{s(\alpha)}(z\cdot s(\alpha))d z \right) \overline{\delta_\alpha(S(\pi(\alpha)))}\\
 			& =\left( \int_{\mathbb{T}}f(S(\pi(\alpha)))d z \right) \overline{\delta_\alpha(S(\pi(\alpha)))} =f(S(\pi(\alpha)))\overline{\delta_\alpha(S(\pi(\alpha)))} \\
 			& =f(S(\pi(\alpha))) \overline{z(S(\pi(\alpha)),\alpha)}= f(S(\pi(\alpha))) z(\alpha, S(\pi(\alpha)))\\
 			& =f(z(\alpha,S(\pi(\alpha)))\cdot S(\pi(\alpha)))=f(\alpha)\,.
 		\end{align*}
 		
 		Finally, by continuity this extends to all $a \in F^{p}_{\lambda}(\mathcal{G};\mathcal{E}_x)$.
 	\end{proof}
 	
 	\begin{lemma}
 		\label{help}
 		Let $a \in F^p_\lambda(\mathcal{G};\mathcal{E})$,  $x \in \mathcal{G}_0$ and $\gamma, \alpha \in \mathcal{E}_x$, then 
 		$$
 		\langle \lambda_x(a)(\delta_{\gamma})\,,\delta_\alpha \rangle  = \langle \lambda_{r(\gamma)}(a)(\delta_{r(\gamma)})\,,\delta_{\alpha\gamma^{-1}} \rangle \,.
 		$$
 		
 	\end{lemma}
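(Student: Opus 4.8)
Set $y=r(\gamma)$, so that $\gamma\in\mathcal{E}_x$ has $s(\gamma)=x$, $r(\gamma)=y$, and $\alpha\gamma^{-1}\in\mathcal{E}_y$. The idea is to manufacture a single surjective isometry that conjugates $\lambda_x$ into $\lambda_y$ and carries the relevant point masses to one another. Right multiplication by $\gamma$ is a bijection $\mathcal{E}_y\to\mathcal{E}_x$, $\varepsilon\mapsto\varepsilon\gamma$; it is $\mathbb{T}$-equivariant and pushes $\nu_y$ forward to $\nu_x$ (it maps each fibre $\pi^{-1}(\beta)$ onto $\pi^{-1}(\beta\pi(\gamma))$ compatibly with the normalized Haar measure on $\mathbb{T}$). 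Hence $(V_\gamma\xi)(\varepsilon):=\xi(\varepsilon\gamma)$ defines a surjective isometry $V_\gamma\colon L^p(\mathcal{G}_x;\mathcal{E}_x)\to L^p(\mathcal{G}_y;\mathcal{E}_y)$, with inverse given by right translation by $\gamma^{-1}$; transposing with respect to the pairing fixed before Proposition \ref{map}, its Banach-space adjoint is $(V_\gamma^{*}\eta)(\zeta)=\eta(\zeta\gamma^{-1})$ on $L^q(\mathcal{G}_y;\mathcal{E}_y)$.

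Next I would verify the intertwining relation
$$V_\gamma\,\lambda_x(f)=\lambda_y(f)\,V_\gamma\qquad\text{for all }f\in\Sigma_c(\mathcal{G};\mathcal{E}),$$
which is a single change of variable $\eta=\delta\gamma$ in the convolution integral defining $\lambda_x(f)$, using associativity in $\mathcal{E}$ together with the measure-invariance of right translation noted above. Since $\lambda_x$ and $\lambda_y$ are contractive on $F^p_\lambda(\mathcal{G};\mathcal{E})$ and $\norm{V_\gamma}=1$, this identity extends by continuity to all $a\in F^p_\lambda(\mathcal{G};\mathcal{E})$. Then two one-line computations from the definition of the point masses and $\mathbb{T}$-equivariance give $V_\gamma\delta_\gamma=\delta_{r(\gamma)}$ and $V_\gamma^{*}\delta_{\alpha\gamma^{-1}}=\delta_\alpha$, and combining these with the intertwining relation,
$$\langle\lambda_x(a)\delta_\gamma,\delta_\alpha\rangle=\langle V_\gamma\lambda_x(a)\delta_\gamma,\delta_{\alpha\gamma^{-1}}\rangle=\langle\lambda_{r(\gamma)}(a)\delta_{r(\gamma)},\delta_{\alpha\gamma^{-1}}\rangle,$$
which is exactly the claim.

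The part requiring genuine care, rather than mechanical checking, is the bookkeeping with the $\mathbb{T}$-coordinates: making sure $V_\gamma$ respects $\mathbb{T}$-equivariance so that it is well defined on the spaces $L^p(\mathcal{G}_x;\mathcal{E}_x)$, that it is genuinely isometric with the normalization coming from the unit-mass fibres (Lemma \ref{lp}), and that its transpose for the chosen bilinear pairing is precisely right translation by $\gamma^{-1}$ and not by $\gamma$. A slightly more computational alternative avoids $V_\gamma$ altogether: check directly that $(\lambda_x(f)\delta_\gamma)(\zeta)=f(\zeta\gamma^{-1})$ for $f\in\Sigma_c(\mathcal{G};\mathcal{E})$, pair with $\delta_\alpha$ to get $f(\alpha\gamma^{-1})$, recognize this via Proposition \ref{map} as $j_f(\alpha\gamma^{-1})=\langle\lambda_{r(\gamma)}(f)\delta_{r(\gamma)},\delta_{\alpha\gamma^{-1}}\rangle$, and then extend by continuity in $a$. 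I would present the $V_\gamma$ version, since the intertwining relation it yields is likely to be reused.
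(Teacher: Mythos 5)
Your proposal is correct, but your preferred route differs from the paper's. The paper's proof is exactly your ``slightly more computational alternative'': for $f\in\Sigma_c(\mathcal{G};\mathcal{E})$ it expands the pairing, uses that $\delta_\alpha$ is supported on $\pi^{-1}(\pi(\alpha))$ to collapse the sum to the single term $\lambda_x(f)(\delta_\gamma)(\alpha)=f(\alpha\gamma^{-1})$, recognizes this as $\langle\lambda_{r(\gamma)}(f)(\delta_{r(\gamma)}),\delta_{\alpha\gamma^{-1}}\rangle$ by the same computation (essentially Proposition \ref{map}), and extends by continuity. Your primary argument instead packages the translation invariance into a surjective isometry $V_\gamma$ of right translation, proves the intertwining relation $V_\gamma\lambda_x(\cdot)=\lambda_{r(\gamma)}(\cdot)V_\gamma$, and identifies $V_\gamma\delta_\gamma=\delta_{r(\gamma)}$ and $V_\gamma^{*}\delta_{\alpha\gamma^{-1}}=\delta_\alpha$; all of these checks go through (the $\mathbb{T}$-equivariance and measure-invariance of right translation, and the computation of the transpose against the section-defined pairing, work exactly as you indicate). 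What your version buys is a reusable operator identity expressing the equivalence of the regular representations $\lambda_x$ and $\lambda_{r(\gamma)}$, at the cost of several verifications (isometry, intertwining, adjoint) that the paper avoids by computing scalars directly; both versions require the same final density argument to pass from $\Sigma_c(\mathcal{G};\mathcal{E})$ to all of $F^p_\lambda(\mathcal{G};\mathcal{E})$.
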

 	\begin{proof}
 		let $f \in \Sigma_c(\mathcal{G};\mathcal{E})$, then
 		\begin{align*}
 			\langle \lambda_{x}(f)(\delta_{\gamma})\,,\delta_\alpha \rangle &= \sum_{\beta \in \mathcal{G}_x}\lambda_x(f)(\delta_\gamma)(S(\beta))\overline{\delta_\alpha(S(\beta))} \\
 			&= \lambda_x(f)(\delta_\gamma)(S(\pi(\alpha)))z(\alpha,S(\pi(\alpha))) \\
 			&= \lambda_x(f)(\delta_\gamma)(z(\alpha,S(\pi(\alpha)))\cdot S(\pi(\alpha))) \\
 			&= \lambda_x(f)(\delta_\gamma)(\alpha) \\
 			&= f(\alpha\gamma^{-1}) =  \langle\lambda_{r(\gamma)}(f)(\delta_{r(\gamma)})\,,\delta_{\alpha\gamma^{-1}} \rangle.
 		\end{align*}

 		By continuity this holds for all $a \in F^{p}_{\lambda}(\mathcal{G};\mathcal{E})$.
 	\end{proof}
 	
 	Given a Banach space $E$ and an operator $T \in \mathcal{B}(E)$ we write by $T^{\prime}$ the \emph{adjoint} of $T$. The operator $T^\prime \in \mathcal{B}(E^\ast)$, where $E^\ast$ is the dual of $E$,  is determined by $\langle x\,,T^\prime x^\ast \rangle = \langle Tx\,, x^\ast \rangle$ for all $x \in E$ and $x^\ast \in E^\ast$ where $\langle \cdot \,, \cdot \rangle$ represent the dual pairing of $E^\ast$ and $E$. Recall that $\norm{T} = \norm{T^\prime}$.
 	
 	\begin{lemma}
 		Let $x \in \mathcal{G}^{ \left(0 \right)}$. For $a \in F^{p}_{\lambda}(\mathcal{G};\mathcal{E})$, we write $\lambda_{x}(a)^{\prime} \colon L^q(\mathcal{G}_x;\mathcal{E}_x) \rightarrow L^q(\mathcal{G}_x;\mathcal{E}_x)$ for the adjoint of $\lambda_x(a)$. We define the contractive linear maps
 		$l_x \colon F^{p}_{\lambda}(\mathcal{G};\mathcal{E}) \rightarrow L^{p}(\mathcal{G}_x;\mathcal{E}_x)$  and $r_x \colon F^{p}_{\lambda}(\mathcal{G};\mathcal{E}) \rightarrow L^{q}(\mathcal{G}_x;\mathcal{E}_x)$ by 
 		\begin{equation*}
 			r_x(a) := \lambda_{x}(a)^{\prime}\delta_x\qquad \textrm{and}\qquad l_x(a) := \lambda_{x}(a)\delta_x\,,
 		\end{equation*}
 		for $a \in F^p_\lambda(\mathcal{G},\mathcal{E})$. Then $r_x(a)(\gamma) = \overline{j_a(\gamma^{-1})}$ and $l_x(a)(\gamma) = j_a(\gamma)$ for all $\gamma \in \mathcal{E}_{x}$.
 	\end{lemma}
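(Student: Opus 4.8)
The plan is to reduce both identities to Proposition \ref{map}, using two ingredients: the contractivity of the building blocks, and a pair of point-evaluation identities for the dual pairing. Contractivity of $l_x$ and $r_x$ is immediate, since $\norm{\delta_x}_p = \norm{\delta_x}_q = 1$ (the point masses have measure-one support $\pi^{-1}(x)$), $\norm{\lambda_x(a)} \le \norm{a}_\lambda$, and $\norm{\lambda_x(a)^\prime} = \norm{\lambda_x(a)}$. No density argument is needed, because Proposition \ref{map} and Lemma \ref{help} are already stated for arbitrary $a \in F^p_\lambda(\mathcal{G};\mathcal{E})$.

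Next I would establish the point-evaluation identities: for every $\beta \in \mathcal{E}_x$ one has $\langle \xi,\delta_\beta\rangle = \xi(\beta)$ for all $\xi \in L^p(\mathcal{G}_x;\mathcal{E}_x)$, and $\langle \delta_\beta,\eta\rangle = \overline{\eta(\beta)}$ for all $\eta \in L^q(\mathcal{G}_x;\mathcal{E}_x)$. Both follow by expanding the dual pairing $\langle \xi,\eta\rangle = \sum_{\gamma\in\mathcal{G}_x}\xi(S(\gamma))\overline{\eta(S(\gamma))}$: the only surviving term is the one with $\gamma = \pi(\beta)$, and then one uses $\delta_\beta(S(\pi(\beta))) = z(S(\pi(\beta)),\beta)$, the relation $\overline{z(\delta,\varepsilon)} = z(\varepsilon,\delta)$, and the $\mathbb{T}$-equivariance $\eta(z\cdot\varepsilon) = z\eta(\varepsilon)$ together with $z(\beta,S(\pi(\beta)))\cdot S(\pi(\beta)) = \beta$ to collapse the $\mathbb{T}$-factors. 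The extra conjugate in the second identity is precisely the conjugate sitting on $\eta$ in the pairing.

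The two formulas then follow at once. For $l_x$, the first point-evaluation identity gives $l_x(a)(\gamma) = \langle \lambda_x(a)\delta_x,\delta_\gamma\rangle$, which equals $j_a(\gamma)$ by Proposition \ref{map} since $s(\gamma) = x$. For $r_x$, the second identity gives $r_x(a)(\gamma) = \overline{\langle \delta_\gamma,\lambda_x(a)^\prime\delta_x\rangle}$, and the definition of the adjoint turns this into $\overline{\langle \lambda_x(a)\delta_\gamma,\delta_x\rangle}$. Applying Lemma \ref{help} with $\alpha = x$ (the unit $i(x,1)$, for which $x\gamma^{-1} = \gamma^{-1}$) rewrites $\langle \lambda_x(a)\delta_\gamma,\delta_x\rangle = \langle \lambda_{r(\gamma)}(a)\delta_{r(\gamma)},\delta_{\gamma^{-1}}\rangle$, which is $j_a(\gamma^{-1})$ by Proposition \ref{map} (as $s(\gamma^{-1}) = r(\gamma)$); hence $r_x(a)(\gamma) = \overline{j_a(\gamma^{-1})}$. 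The only real care required is the bookkeeping with the $\mathbb{T}$-cocycle $z(\cdot,\cdot)$ — placing the conjugates correctly in the $L^q$ point-evaluation identity — and checking the composability and unit identities needed to invoke Lemma \ref{help}; this is where I expect whatever minor friction there is.
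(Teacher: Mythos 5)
Your proposal is correct and follows essentially the same route as the paper: both reduce $l_x$ to Proposition \ref{map} via the pairing $\langle \lambda_x(a)\delta_x,\delta_\gamma\rangle$, and both handle $r_x$ by passing through the adjoint and Lemma \ref{help} with $\alpha=x$. Your explicit verification of the point-evaluation identities $\langle\xi,\delta_\beta\rangle=\xi(\beta)$ and $\langle\delta_\beta,\eta\rangle=\overline{\eta(\beta)}$ spells out a step the paper leaves implicit, and your contractivity argument (operator-norm bound times $\norm{\delta_x}=1$) is the same estimate the paper writes via the convolution formula.
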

 	
 	\begin{proof}
 		Let $x \in \mathcal{G}^{(0)}$ and let $\gamma \in \mathcal{E}_x$. For $a \in F^{p}_{\lambda}(\mathcal{G},\mathcal{E})$ we have by Proposition \ref{help} that
 		
 		$$
 		l_x(a)(\gamma) = \langle \lambda_{x}(a)(\delta_{x})\,,\delta_{\gamma} \rangle =  j_a(\gamma)\,.
 		$$
 		
 		Similarly, using Lemma 	\ref{help} we have that
 		$$
 		\overline{r_x(a)(\gamma)} = \langle \delta_{\gamma} \,,\lambda_{x}(a)^{\prime}\delta_x\rangle = \langle \lambda_{x}(a)(\delta_{\gamma})\,,\delta_x \rangle = \langle \lambda_{s(\gamma^{-1})}(a)(\delta_{s(\gamma^{-1})})\,,\delta_{{\gamma}^{-1}} \rangle = j_a(\gamma^{-1})\,,
 		$$
 		for all $\gamma \in \mathcal{E}_x$. It remains to show that the maps are contractive. We have that $\norm{l_x} = \sup \{\norm{l_x(a)}_{p} \colon \norm{a}_{\lambda} \leq 1\}$. Let $f \in \Sigma_c(\mathcal{G},\mathcal{E})$. Since $$
 		\norm{l_x(f)}_{p} = \|\lambda_x(f)(\delta_x)\|_p = \norm{f \ast \delta_x}_p\leq \norm{f}_\lambda\norm{\delta_x}_p=\norm{f}_\lambda,
 		$$
 		it follows that $\norm{l_x} \leq 1$. Similarly,  it follows that $r_x$ is contractive.
 	\end{proof}
 	
 	\begin{proposition}
 		\label{multi}
 		Let $a,b \in F^{p}_{\lambda}(\mathcal{G};\mathcal{E})$, $\gamma\in \mathcal{E}$. Then 
 		$$
 		j_{a \ast b}(\gamma) = \int_{\mathcal{E}_{s(\gamma)}} j_a(\gamma \alpha^{-1})j_b(\alpha)d\nu_{s(\gamma)}\,.
 		$$
 	\end{proposition}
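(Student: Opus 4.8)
The plan is to deduce the stated convolution identity on all of $F^p_\lambda(\mathcal{G};\mathcal{E})$ from the description of $j$ in Proposition \ref{map} together with Lemma \ref{help}, rather than trying to pass to the limit directly inside the displayed integral (where the supports of $j_a$ grow uncontrollably). First I would fix $\gamma\in\mathcal{E}$ and put $x=s(\gamma)$, so that $\gamma$ and each $S(\alpha)$ ($\alpha\in\mathcal{G}_x$) lie in $\mathcal{E}_x$. Since every $\lambda_x$ is a contractive homomorphism on $(\Sigma_c(\mathcal{G};\mathcal{E}),\norm{\cdot}_\lambda)$ and $\Sigma_c(\mathcal{G};\mathcal{E})$ is dense in $F^p_\lambda(\mathcal{G};\mathcal{E})$, it extends to an algebra homomorphism of the completion, so $\lambda_x(a\ast b)=\lambda_x(a)\lambda_x(b)$, and Proposition \ref{map} gives
$$
j_{a\ast b}(\gamma)=\langle\lambda_x(a\ast b)(\delta_x)\,,\delta_\gamma\rangle=\langle\lambda_x(a)\big(\lambda_x(b)(\delta_x)\big)\,,\delta_\gamma\rangle=\langle\lambda_x(a)\big(l_x(b)\big)\,,\delta_\gamma\rangle\,.
$$

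Next I would expand $l_x(b)$ in the standard unit-vector basis of $L^p(\mathcal{G}_x;\mathcal{E}_x)\cong l^p(\mathcal{G}_x)$: since $l_x(b)(S(\alpha))=j_b(S(\alpha))$ by the lemma preceding this proposition, the series $\sum_{\alpha\in\mathcal{G}_x}j_b(S(\alpha))\delta_{S(\alpha)}$ converges unconditionally to $l_x(b)$ in $L^p$ (valid for $p\in[1,\infty)$). Applying the bounded operator $\lambda_x(a)$ and then the bounded functional $\langle\,\cdot\,,\delta_\gamma\rangle$, and passing both through the limit of finite partial sums, I obtain
$$
j_{a\ast b}(\gamma)=\sum_{\alpha\in\mathcal{G}_x}j_b(S(\alpha))\,\langle\lambda_x(a)(\delta_{S(\alpha)})\,,\delta_\gamma\rangle\,.
$$
Applying Lemma \ref{help} with the ordered pair $(S(\alpha),\gamma)$ in place of $(\gamma,\alpha)$, and then Proposition \ref{map} to $\gamma S(\alpha)^{-1}$ (whose source is $r(S(\alpha))$), each inner product equals $j_a(\gamma S(\alpha)^{-1})$, so $j_{a\ast b}(\gamma)=\sum_{\alpha\in\mathcal{G}_x}j_a(\gamma S(\alpha)^{-1})j_b(S(\alpha))$. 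Finally I would convert this sum into the integral using $\mathbb{T}$-equivariance of $j_a,j_b$ (which holds on the completion because $j$ is uniformly contractive and the equivariant functions in $C_0(\mathcal{E})$ are uniformly closed): for $\eta=z\cdot S(\alpha)\in\pi^{-1}(\alpha)$ one has $j_b(\eta)=z\,j_b(S(\alpha))$ and $j_a(\gamma\eta^{-1})=\overline{z}\,j_a(\gamma S(\alpha)^{-1})$, so $j_a(\gamma\alpha^{-1})j_b(\alpha)$ is constant on each fibre $\pi^{-1}(\alpha)$ with value $j_a(\gamma S(\alpha)^{-1})j_b(S(\alpha))$; integrating over $\pi^{-1}(\alpha)$ (of mass $1$) and summing over $\alpha\in\mathcal{G}_x$ reproduces $\int_{\mathcal{E}_{s(\gamma)}}j_a(\gamma\alpha^{-1})j_b(\alpha)\,d\nu_{s(\gamma)}$.

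The step needing the most care — the main obstacle — is the interchange of the infinite sum with $\lambda_x(a)$ and $\langle\,\cdot\,,\delta_\gamma\rangle$; this is legitimate precisely because the basis expansion of $l_x(b)$ converges in $L^p$-norm and both maps are bounded, so $\langle\lambda_x(a)(\cdot),\delta_\gamma\rangle$ passes through the net of finite partial sums. If one prefers to avoid the infinite interchange altogether, one can instead verify the identity first for $a,b\in\Sigma_c(\mathcal{G};\mathcal{E})$ (where it is the defining convolution formula combined with $j_f=f$), then extend to arbitrary $a$ with $b\in\Sigma_c(\mathcal{G};\mathcal{E})$ (both sides depend continuously on $a$, the right side being a finite sum with $|j_a(\gamma S(\alpha)^{-1})|\le\norm{a}_\lambda$), and finally to arbitrary $b$: the map $b\mapsto l_{s(\gamma)}(b)$ is contractive into $l^p(\mathcal{G}_{s(\gamma)})$ while $\big(j_a(\gamma S(\alpha)^{-1})\big)_{\alpha}$ lies in $l^q$ with norm at most $\norm{a}_\lambda$, so by Hölder the right side is continuous in $b$. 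Everything else is routine bookkeeping with Proposition \ref{map}, Lemma \ref{help}, and $\mathbb{T}$-equivariance.
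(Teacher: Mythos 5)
Your proof is correct and follows essentially the same route as the paper's: both reduce $j_{a\ast b}(\gamma)$ via Proposition \ref{map} and Lemma \ref{help} to the absolutely convergent sum $\sum_{\alpha\in\mathcal{G}_{s(\gamma)}} j_a(\gamma S(\alpha)^{-1})j_b(S(\alpha))$ and then pass to the fibrewise integral by $\mathbb{T}$-equivariance. The only (minor) difference is in how the key interchange is justified: the paper pairs $l_x(b)$ against $\lambda_x(a)'\delta_\gamma = r_x(\delta_{\gamma^{-1}}\ast a)$ in the $L^p$--$L^q$ duality, whereas you expand $l_x(b)$ in the unconditional unit-vector basis of $l^p(\mathcal{G}_x)$ and push the norm-convergent series through the bounded maps $\lambda_x(a)$ and $\langle\,\cdot\,,\delta_\gamma\rangle$ --- an equally valid and slightly more self-contained justification.
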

 	
 	\begin{proof}
 		Let $x\in \mathcal{G}^{(0)}$. First we want to show that $\overline{j_a(\gamma \alpha^{-1})} = r_x(\delta_{\gamma^{-1}} \ast a)(\alpha)$ for all $\alpha \in \mathcal{E}_x$. Let $f \in \Sigma_c(\mathcal{G};\mathcal{E})$, then we have that
 		\begin{align*}
 			\overline{r_x(\delta_{\gamma^{-1}} \ast f)(\alpha)} &= \langle   \delta_{\alpha} \,, \lambda_x(\delta_{\gamma^{-1}} \ast f)^{\prime}(\delta_x) \rangle = \langle  \lambda_{x}(\delta_{\gamma^{-1}} \ast  f)(\delta_{\alpha}) \,, \delta_x  \rangle  \\
 			&= \langle  \lambda_{x}(\delta_{\gamma^{-1}}) \lambda_x  (f)(\delta_{\alpha}) \,, \delta_x  \rangle  = \langle   \lambda_x  (f)(\delta_{\alpha}) \,, \lambda_{x}(\delta_{\gamma^{-1}})'(\delta_x)  \rangle   \\
 			& = \langle   \lambda_x  (f)(\delta_{\alpha}) \,, \delta_\gamma  \rangle =  \langle   \lambda_{r(\alpha)}  (f)(\delta_{r(\alpha)}) \,, \delta_{\gamma\alpha^{-1}} \rangle = j_f(\gamma \alpha^{-1})\,.
 		\end{align*}
 		
 		Let $\gamma \in \mathcal{E}_x$, we now want to show that $\lambda_x(a)^\prime\delta_\gamma(\alpha) = \overline{j_a(\gamma\alpha^{-1})}$ for all $\alpha \in \mathcal{E}_x$. As usual, let $f \in \Sigma_c(\mathcal{G};\mathcal{E})$. Using Lemma \ref{help} in the third step, we have that
 		\begin{align*}
 			\overline{\lambda_x(a)^\prime \delta_\gamma(\alpha)} = \langle  \delta_{\alpha} \,, \lambda_x(a)^\prime(\delta_\gamma) \rangle =  \langle  \lambda_x(a)(\delta_{\alpha}) \,, \delta_\gamma  \rangle = \langle \lambda_{r(\alpha)}(a)(\delta_{r(\alpha)}) \,,\delta_{\gamma\alpha^{-1}} \rangle = j_a(\gamma\alpha^{-1})\,.
 		\end{align*}
 		
 		Combining the two statements we finally have 
 		\begin{align*}
 			j_{a \ast b}(\gamma) &= \langle \lambda_{x}(a \ast b)(\delta_{x})\,,\delta_\gamma \rangle = \langle \lambda_{x}(a)\left(\lambda_{x}(b)(\delta_{x})\right)\,,\delta_\gamma \rangle \\
 			&=  \langle  \lambda_{x}(b)(\delta_{x})\,,\lambda_{x}(a)^{\prime}\delta_\gamma \rangle = \langle l_{x}(b)\,, r_x(\delta_{\gamma^{-1}} \ast a)\rangle \\
 			& 	= \sum_{\beta \in \mathcal{G}_x} l_{x}(b)(S(\beta)) \overline{r_x(\delta_{\gamma^{-1}} \ast a)(\gamma S({\beta})^{-1})} \\
 			&= \sum_{\beta \in \mathcal{G}_x} j_b(S(\beta)) j_a(\gamma S({\beta})^{-1})=  \int_{\mathcal{E}_x} j_a(\gamma\alpha^{-1})j_b(\alpha)d\nu_x \,,
 		\end{align*}
 		which is well defined since the sum is absolutely convergent as it is given by the dual paring of elements in $L^{p}(\mathcal{G}_x;\mathcal{E}_x)$ and $L^{q}(\mathcal{G}_x;\mathcal{E}_x)$, and the last equality follows from (\ref{section_conv}). 
 	\end{proof}

 	\section{Rigidity of reduced twisted groupoid $L^p$-operator algebras.}
 	
 	A key ingredient to prove the rigidity results of groupoid $L^p$-operator algebras is the \emph{$C^\ast$-core}. The following results of the section come from  \cite[Sections 2 and 3]{choi2021rigidity}.

 	\subsection{$C^*$-cores of Banach algebras.}
 	Let $A$ be a unital Banach algebra. An element $a$ of $A$ is called \emph{hermitian} if $\norm{e^{ita}} = 1$ for all $t \in \mathbb{R}$. We denote the set of hermitian elements of $A$ by $A_h$, which is a closed real linear subspace satisfying $A_h \cap iA_h = \{ 0 \}$.
 	
 	If $A$ is a unital $C^\ast$-algebra, then $A_h$ consists of the self adjoint elements and $A = A_h + iA_h$. The Vidav–Palmer theorem shows that the converse also holds. So if $A$ is a unital Banach algebra with $A = A_h + iA_h$, then the real-linear involution $x+iy \mapsto x-iy$ is both isometric and an algebra involution that satisfies the $C^\ast$-identify. This motivates the following definition:
 	
 	\begin{definition}
 		Let $A$ be a unital Banach algebra, and let $B \subseteq A$ be a unital closed subalgebra. We say that $B$ is a \emph{unital $C^\ast$-subalgebra} of $A$ if $B = B_h + iB_h$.
 	\end{definition}
 	
 	The following theorem is \cite[Theorem 2.9]{choi2021rigidity} and will be the backbone of the rigidity result.
 	
 	\begin{theorem}
 		\label{core}
 		Let $p \in [1,\infty)$, and let A be unital $L^{p}$-operator algebra. Set $ \textrm{core}(A) = A_h + i A_h$. Then $\textrm{core}(A)$ is the largest unital $C^{\ast}$-subalgebra of $A$. If $p\neq 2$, then $\textrm{core}(A)$ is commutative.
 	\end{theorem}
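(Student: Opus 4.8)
The plan is to pin down the hermitian part $A_h$ concretely — separately for $p=2$ and for $p\neq 2$ — so that $\textrm{core}(A)=A_h+iA_h$ becomes a concretely realised $C^\ast$-algebra, and then to deduce maximality by a soft argument that uses only the definition of hermitian element. First I would fix an isometric unital representation $A\hookrightarrow\mathcal{B}(E)$ on an $L^p$-space $E$; since $\|e^{ita}\|$ depends only on the norm of $A$, the set $A_h$ is intrinsic and is carried onto the hermitian operators of $\mathcal{B}(E)$ that lie in $A$.

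For $p=2$, differentiating $t\mapsto\langle e^{itT}\xi,e^{itT}\xi\rangle$ shows that $T\in\mathcal{B}(E)$ is hermitian if and only if $T=T^\ast$; hence $A_h$ is the set of self-adjoint operators in $A$, and $\textrm{core}(A)=A_h+iA_h$ equals $\{a\in A:a^\ast\in A\}$, which is a norm-closed, self-adjoint, unital subalgebra of $\mathcal{B}(E)$, so a unital $C^\ast$-subalgebra of $A$.

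For $p\in[1,\infty)\setminus\{2\}$ I would argue as follows. Let $T\in A_h$. Then $\{e^{itT}:t\in\mathbb{R}\}$ is a one-parameter group of surjective isometries of $E$ (each $e^{itT}$ is invertible with inverse $e^{-itT}$ of norm one), so by Lamperti's Theorem \ref{lamperti} we may write $e^{itT}=m_{f_t}u_{\varphi_t}$ uniquely. Since $t\mapsto e^{itT}$ is norm-continuous and, by the norm formula in Theorem \ref{lamperti}, $\|e^{isT}-e^{itT}\|=\max\{\|f_s-f_t\|_\infty,2\delta_{\varphi_s,\varphi_t}\}$, the map $t\mapsto\varphi_t$ is locally constant; as $\varphi_0=\textrm{id}$ this forces $\varphi_t=\textrm{id}$ for all $t$, so $e^{itT}=m_{f_t}$ with $(f_t)_t$ a norm-continuous one-parameter group of unitaries in the commutative $C^\ast$-algebra $L^\infty(\mathcal{A},\mu)$. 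Differentiating at $0$ yields $f_t=e^{it\phi}$ for a real-valued $\phi\in L^\infty(\mathcal{A},\mu)$ and $T=m_\phi$. Thus $A_h$ consists of multiplication operators by real $L^\infty$-functions; in particular its elements commute, so $\textrm{core}(A)=A_h+iA_h$ is a commutative family of multiplication operators, and writing $a=m_{\phi_1}+im_{\phi_2}$ with $\phi_1,\phi_2$ real and $m_{\phi_j}\in A$ shows $\textrm{core}(A)$ is closed under $m_\psi\mapsto m_{\bar\psi}$. Hence $\textrm{core}(A)$ is a commutative, norm-closed, self-adjoint, unital subalgebra of $\mathcal{B}(E)$, i.e. a commutative unital $C^\ast$-subalgebra of $A$.

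Finally, maximality is uniform in $p$: let $B\subseteq A$ be any unital $C^\ast$-subalgebra, so $B=B_h+iB_h$ and, by the Vidav–Palmer theorem, $B$ is a $C^\ast$-algebra with self-adjoint part $B_h$; for $b\in B_h$ the element $e^{itb}$ is unitary in $B$, hence $\|e^{itb}\|_A=\|e^{itb}\|_B=1$, so $b\in A_h$. Therefore $B=B_h+iB_h\subseteq A_h+iA_h=\textrm{core}(A)$, and combined with the above this shows $\textrm{core}(A)$ is the largest unital $C^\ast$-subalgebra of $A$. I expect the $p\neq2$ case to be the crux: using Lamperti's Theorem to eliminate the Boolean-automorphism part of each $e^{itT}$ is precisely what makes $\textrm{core}(A)$ a subalgebra (which is not evident abstractly) and forces its commutativity.
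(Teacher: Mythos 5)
Your proof is correct; the paper does not prove this theorem itself but imports it from \cite[Theorem~2.9]{choi2021rigidity}, remarking only that the $p\neq 2$ commutativity ``is a consequence of Lamperti's Theorem'', and your argument --- Lamperti's decomposition together with the norm formula of Theorem~\ref{lamperti} to force $\varphi_t=\mathrm{id}$, so that hermitian elements are exactly multiplication operators by real $L^\infty$-functions --- is precisely the intended route, and your maximality step via $B_h\subseteq A_h$ is the standard one. The only detail worth flagging is that to apply Theorem~\ref{lamperti} to an arbitrary $L^p$-space $E$ one must first replace the underlying measure space by a localizable measure algebra, a standard reduction that leaves $L^p$ unchanged for $p<\infty$.
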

 	
 	The last statement for $p\neq2$ is a consequence of Lamperti's Theorem.
 	
 	\begin{definition}
 		Let $p \in [1,\infty)$, and let $A$ be a unital $L^p$-operator algebra. We call the algebra $\textrm{core}(A)$  the \emph{$C^\ast$-core} of $A$.
 	\end{definition}
 	
 	The $C^\ast$-core will play the same role as the \emph{maximal abelian subalgebra} does for $C^\ast$-algebras, but there are  two important differences. Firstly, the $C^\ast$-core is unique, which will give rise to the rigidity result. Secondly, the $C^\ast$-core is very small, and in many cases it is too small to carry any useful information about the structure of the algebra. The last point is the reason why there will be only considered topologically principal groupoids. 
 	
 	\begin{proposition}{\cite[Proposition 2.13]{choi2021rigidity}}\label{homo_core}
 		\label{core-map}
 		Let $p \in [1,\infty)$, and let $A$ and $B$ be two unital $L^p$-operator algebras. Let $\varphi \colon A \rightarrow B$ be a unital contractive linear map. Then $\varphi(\text{core}(A)) \subseteq \textrm{core}(B)$ and $\varphi \colon \textrm{core}(A) \rightarrow \textrm{core}(B)$ is a $\ast$-homomorphism.
 	\end{proposition}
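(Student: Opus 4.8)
The plan is to prove the two assertions in turn. For the inclusion $\varphi(\mathrm{core}(A))\subseteq\mathrm{core}(B)$ it suffices, since $\mathrm{core}(A)=A_h+iA_h$ and $\varphi$ is $\mathbb C$-linear, to show $\varphi(A_h)\subseteq B_h$. Here lies what I expect to be the one real obstacle: the defining condition of a hermitian element, $\norm{e^{ita}}=1$ for all $t\in\mathbb R$, is expressed through the exponential --- hence through the multiplication of $A$ --- and so is not transported by a map that is only linear (one must not write $\varphi(e^{ita})=e^{it\varphi(a)}$). The way around it is to pass to the equivalent description by the numerical range: recall the classical fact (Bonsall--Duncan) that in a unital Banach algebra $C$ an element $c$ is hermitian if and only if its numerical range $V(C,c)=\{\,g(c):g\in C^{*},\ \norm{g}=g(1_C)=1\,\}$ is contained in $\mathbb R$. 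This description involves only the linear structure, the unit and the norm, and is visibly preserved by unital contractions: if $g$ is a state of $B$ then $g\circ\varphi$ is linear with $\norm{g\circ\varphi}\le\norm{g}\,\norm{\varphi}\le1$ and $(g\circ\varphi)(1_A)=g(\varphi(1_A))=g(1_B)=1$, which (using $\norm{1_A}=1$) forces $\norm{g\circ\varphi}=1$; thus $g\circ\varphi$ is a state of $A$, and therefore $g(\varphi(a))=(g\circ\varphi)(a)\in V(A,a)$. Hence $V(B,\varphi(a))\subseteq V(A,a)$, so $a\in A_h$ gives $V(B,\varphi(a))\subseteq\mathbb R$, i.e.\ $\varphi(a)\in B_h$. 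Complex-linearity then yields $\varphi(\mathrm{core}(A))=\varphi(A_h)+i\varphi(A_h)\subseteq B_h+iB_h=\mathrm{core}(B)$.

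For the second assertion I would invoke Theorem~\ref{core}: $\mathrm{core}(A)$ and $\mathrm{core}(B)$ are unital $C^{*}$-algebras, and by the Vidav--Palmer theorem their involutions are $x+iy\mapsto x-iy$ with $x,y$ running over the self-adjoint parts, which are exactly $A_h$, resp.\ $B_h$. Granting the first step, $\varphi$ maps $A_h$ into $B_h$; so for $a=x+iy\in\mathrm{core}(A)$ with $x,y\in A_h$ one has $\varphi(a^{*})=\varphi(x)-i\varphi(y)$, while $\varphi(a)^{*}=(\varphi(x)+i\varphi(y))^{*}=\varphi(x)-i\varphi(y)$ because $\varphi(x),\varphi(y)\in B_h$. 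Thus $\varphi$ is $*$-preserving on $\mathrm{core}(A)$; since it is in addition multiplicative, $\varphi\colon\mathrm{core}(A)\to\mathrm{core}(B)$ is a $*$-homomorphism.

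In short, the proof is brief once the numerical-range reformulation is in hand; I would highlight that as the single conceptual point, everything afterwards being bookkeeping with the hermitian decomposition and the Vidav--Palmer description of the $C^{*}$-core. Note that nothing here uses the commutativity of the core (the feature of the case $p\ne2$), so the statement holds for all $p\in[1,\infty)$.
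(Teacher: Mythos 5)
The paper gives no proof of this statement at all: it is quoted verbatim from \cite[Proposition 2.13]{choi2021rigidity}, so there is nothing internal to compare against. Your argument for the inclusion $\varphi(\mathrm{core}(A))\subseteq\mathrm{core}(B)$ is exactly the standard (and, in the cited source, the actual) one: hermitian elements are characterized by real numerical range, unital contractions pull states back to states, hence numerical ranges shrink and $\varphi(A_h)\subseteq B_h$. That part is correct and complete, and your identification of the numerical-range reformulation as the one conceptual point is accurate. Your use of Theorem \ref{core} to know that $A_h+iA_h$ is genuinely a $C^\ast$-algebra (so that ``$\ast$-preserving'' makes sense, via uniqueness of the decomposition $a=x+iy$ coming from $A_h\cap iA_h=\{0\}$) is also the right way to organize the second half.

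The one genuine gap is the clause ``since it is in addition multiplicative.'' Multiplicativity is not among the stated hypotheses: the proposition as transcribed assumes only a unital contractive \emph{linear} map, and multiplicativity on the core cannot be derived from linearity, contractivity and unitality. Indeed, for $A=B=C(X)$ (an $L^p$-operator algebra equal to its own core), the map $f\mapsto \bigl(\int_X f\,d\mu\bigr)\cdot 1$ for a probability measure $\mu$ is unital, contractive and linear but not multiplicative, so the conclusion ``$\ast$-homomorphism'' is literally false under the stated hypotheses. The resolution is that the statement is mis-transcribed: the source (and every application in this paper, namely the representations $\lambda_x$ and isometric isomorphisms) assumes $\varphi$ is a unital contractive \emph{algebra homomorphism}. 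You should either add that hypothesis explicitly before invoking multiplicativity, or, if you insist on the linear-map hypothesis, weaken the conclusion to: $\varphi$ restricts to a positive, hence $\ast$-preserving, unital linear map between the cores. With the homomorphism hypothesis made explicit, your proof is correct.
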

 	
 	Now let $A$ be a unital $L^{p}$-operator algebra, with $p \neq 2$, we  know from Theorem \ref{core} that the $C^\ast$-core is a commutative unital $C^{\ast}$-subalgebra. We will write $X_A$ for its spectrum. Recall that the \emph{spectrum} of an Banach algebra is the set of characters endowed with the $w^\ast$-topology. For a unital commutative Banach algebra the spectrum is a compact Hausdorff space, as is the case for $X_A$, and by the Gelfand transform we can isometrically identify the core with $C(X_A)$. 
 	
 	We can finally identify the $C^\ast$-core of $F^{p}_{\lambda}(\mathcal{G};\mathcal{E}$) following the proof of \cite[Proposition 5.1]{choi2021rigidity}. 
 	
 	\begin{theorem}
 		\label{rigid_core}
 		Let $p \in [1,\infty)\setminus\{2\}$, and let $\mathcal{E}$ be a twist over a locally compact étale groupoid $\mathcal{G}$ with compact unit space. Then $\text{core}(F^{p}_{\lambda}(\mathcal{G};\mathcal{E}))= C(\mathcal{G}^{(0)})$.
 	\end{theorem}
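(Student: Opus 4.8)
The plan is to prove the two inclusions separately. For $C(\mathcal{G}^{(0)}) \subseteq \text{core}(F^{p}_{\lambda}(\mathcal{G};\mathcal{E}))$ — the soft direction, which does not even use $p\neq 2$ — I would use \eqref{core-id} together with the compactness of $\mathcal{G}^{(0)}$ (so that $C_c(\mathcal{G}^{(0)})=C(\mathcal{G}^{(0)})$) to realize $C(\mathcal{G}^{(0)})$ inside $\Sigma_c(\mathcal{G};\mathcal{E})$ as the functions supported on $i(\mathcal{G}^{(0)}\times\mathbb{T})$. Since $\mathcal{G}^{(0)}$ is a bisection, Lemma \ref{norm} together with Remark \ref{pm} shows that this realization is isometric for the supremum norm, so its image is a closed, unital (with unit $\lambda(1_{\mathcal{G}^{(0)}})$), commutative subalgebra of $F^{p}_{\lambda}(\mathcal{G};\mathcal{E})$ which, with the evident involution, is a $C^\ast$-algebra. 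By the maximality statement in Theorem \ref{core} it is therefore contained in $\text{core}(F^{p}_{\lambda}(\mathcal{G};\mathcal{E}))$.

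For the reverse inclusion, by Theorem \ref{core} any unital $L^p$-operator algebra $A$ satisfies $\text{core}(A)=A_h+iA_h$, so it suffices to show that every hermitian $a\in F^{p}_{\lambda}(\mathcal{G};\mathcal{E})$ lies in $C(\mathcal{G}^{(0)})$. I would work inside the isometric representation $\lambda=\bigoplus_{x\in\mathcal{G}^{(0)}}\lambda_x$ on $E=\bigoplus_x L^p(\mathcal{G}_x;\mathcal{E}_x)$, whose associated measure algebra $(\mathcal{A},\mu)$ is a direct sum of counting measures (Lemma \ref{lp}) and hence localizable. For $t\in\mathbb{R}$ we have $\norm{e^{ita}}=\norm{e^{-ita}}=1$; since $\lambda$ is isometric and $\lambda(e^{-ita})$ is a two-sided inverse of $\lambda(e^{ita})$, the operator $\lambda(e^{ita})$ is a surjective isometry of $E$. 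By Lamperti's Theorem \ref{lamperti}, $\lambda(e^{ita})=m_{f_t}u_{\varphi_t}$ for unique $f_t\in\mathcal{U}(L^\infty(\mathcal{A}))$ and $\varphi_t\in\textrm{Aut}(\mathcal{A})$. As $\lambda(e^{ita})\to 1=m_1u_{\mathrm{id}}$ in operator norm when $t\to 0$, the norm formula in Theorem \ref{lamperti} forces $\varphi_t=\mathrm{id}$ for all small $t$, and since $t\mapsto\varphi_t$ is a homomorphism $\mathbb{R}\to\textrm{Aut}(\mathcal{A})$ (the $\textrm{Aut}(\mathcal{A})$-parts of products $m_fu_\varphi$ multiply, and $e^{i(s+t)a}=e^{isa}e^{ita}$), we get $\varphi_t=\mathrm{id}$ for all $t$. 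Hence $\lambda(e^{ita})$ is a multiplication operator for every $t$; since the multiplication operators form a norm-closed linear subspace of $\mathcal{B}(E)$ containing the identity, differentiating the analytic map $t\mapsto\lambda(e^{ita})$ at $t=0$ shows $\lambda(a)$ is a multiplication operator as well. Concretely, each $\lambda_x(a)$ is diagonal with respect to the identification $L^p(\mathcal{G}_x;\mathcal{E}_x)\cong\ell^p(\mathcal{G}_x)$ of Lemma \ref{lp}.

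Finally I would read off the support of $a$ using the injective map $j\colon F^{p}_{\lambda}(\mathcal{G};\mathcal{E})\to C_0(\mathcal{E})$ of Proposition \ref{map}. Because $\lambda_{s(\alpha)}(a)$ is diagonal, $\lambda_{s(\alpha)}(a)(\delta_{s(\alpha)})$ is a scalar multiple of $\delta_{s(\alpha)}$, so the formula $j_a(\alpha)=\langle\lambda_{s(\alpha)}(a)(\delta_{s(\alpha)}),\delta_\alpha\rangle$ forces $j_a(\alpha)=0$ whenever $\pi(\alpha)\notin\mathcal{G}^{(0)}$. Thus $j_a\in C_0(\mathcal{E})$ is continuous, $\mathbb{T}$-equivariant and supported on the compact set $i(\mathcal{G}^{(0)}\times\mathbb{T})$, so $j_a\in\Sigma_c(\mathcal{G};\mathcal{E})$ and, by \eqref{core-id}, $j_a=\widetilde{h}$ for a unique $h\in C(\mathcal{G}^{(0)})$. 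Viewing $\widetilde{h}$ as an element of $F^{p}_{\lambda}(\mathcal{G};\mathcal{E})$ and using that $j$ restricts to the identity on $\Sigma_c(\mathcal{G};\mathcal{E})$, we obtain $j_{\widetilde{h}}=j_a$, whence $a=\widetilde{h}\in C(\mathcal{G}^{(0)})$ by injectivity of $j$.

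The step I expect to be the main obstacle is the middle one: turning ``$e^{ita}$ is invertible with $\norm{e^{ita}}=\norm{e^{-ita}}=1$'' into ``$\lambda(e^{ita})$ is a multiplication operator''. This requires the (easy but essential) observation that such an element of a unital $L^p$-operator algebra acts as a genuine surjective isometry of the ambient $L^p$-space, and then the careful use of the norm formula in Theorem \ref{lamperti} plus the homomorphism property of $t\mapsto\varphi_t$ to kill the Boolean-automorphism part on all of $\mathbb{R}$; once $\lambda(a)$ is known to be diagonal, the rest is a direct computation with Proposition \ref{map}. It is worth noting that this argument does not require $\mathcal{G}$ to be topologically principal.
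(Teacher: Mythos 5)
Your proof is correct and follows essentially the same route as the paper: show that hermitian elements act fiberwise as multiplication operators, use the injective map $j$ of Proposition \ref{map} to conclude that their support lies in $i(\mathcal{G}^{(0)}\times\mathbb{T})$, and identify the result with $C(\mathcal{G}^{(0)})$ via (\ref{core-id}). The only differences are that you make explicit the easy inclusion $C(\mathcal{G}^{(0)})\subseteq\mathrm{core}(F^p_\lambda(\mathcal{G};\mathcal{E}))$, and that you derive the key step ``hermitian $\Rightarrow$ multiplication operator'' directly from Theorem \ref{lamperti} via the one-parameter group $t\mapsto\lambda(e^{ita})$, whereas the paper outsources exactly this to Proposition \ref{homo_core} and \cite[Example 2.11]{choi2021rigidity}.
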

 	
 	\begin{proof}
 		Let $a \in \text{core}(F_{\lambda}^{p}(\mathcal{G};\mathcal{E}))$. With the map given in Proposition \ref{map} we want to  show that $\text{supp}(j_a) \subseteq i(\mathcal{G}^{(0)} \times \mathbb{T})$. Fix $x \in \mathcal{G}^{(0)}$. Then $\lambda_x \colon F_{\lambda}^{p}(\mathcal{G};\mathcal{E}) \rightarrow \mathcal{B}(L^p(\mathcal{G}_x;\mathcal{E}_x))$ is a contractive representation and it follows from Proposition \ref{homo_core} that $\lambda_x(\text{core}(F_{\lambda}^{p}(\mathcal{G};\mathcal{E}))) \subseteq \text{core}(\mathcal{B}(L^p(\mathcal{G}_x;\mathcal{E}_x)))$ and so by  \cite[Example 2.11]{choi2021rigidity} we have that $\lambda_x(a)$ is a multiplication operator in $\mathcal{B}(L^p(\mathcal{G}_x;\mathcal{E}_x))$. Let $\gamma \in \mathcal{E}$ and set $x = s(\gamma)$. Since $\lambda_x(a)$ is a multiplication operator, we have that $\lambda_x(a)\delta_x = c\delta_x$ for some constant $c \in \mathbb{T}$. Therefore, given $\gamma\notin i(\mathcal{G}^{(0)} \times \mathbb{T})$ we have that 
 		
 		\begin{align*}
 			j_a(\gamma) &= \langle \lambda_x(a) \delta_x \,, \delta_\gamma \rangle 
 			= \langle c\delta_x \,, \delta_\gamma \rangle 
 			= c\sum_{\beta \in \mathcal{G}_x} \delta_x(S(\beta))\overline{\delta_\gamma(S(\beta))}=0\,,
 		\end{align*}
 		where $S\colon\mathcal{G}\to \mathcal{E}$ is a section of $\pi$.
 		It follows that $\text{supp}(j_a) \subseteq i(\mathcal{G}^{(0)} \times \mathbb{T})$. By Proposition \ref{multi} we have that $j$ is a homomorphism. By Proposition \ref{map} the map $j$ is injective, so $a=j_a$, and using the identity (\ref{core-id}) we have that $j$ induces a $^\ast$-isomorphism $\text{core}(F^{p}_{\lambda}(\mathcal{G};\mathcal{E})) \cong C(\mathcal{G}^{(0)})$.
 	\end{proof}
 	
 	\begin{notation}
 		We will for the rest of the paper omit the the map $j$ from notation and just write $a(\gamma)$ for $j_a(\gamma)$, and we will fix a section $S\colon\mathcal{G}\to \mathcal{E}$.
 	\end{notation}

 	\subsection{Admissible pairs and the Weyl groupoid.}

 	We will denote the subset of continuous non-negative real functions by $C(X_A)_+$, this is the set of positive hermitian elements in $A$.
 	\begin{definition}
 		\label{normal}
 		Let A be a unital $L^{p}$-operator algebra. Given open subsets $U,V \subseteq X_A$, and a homeomorphism $\alpha \colon U \rightarrow V$, we say that $\alpha$ is \emph{realizable}  (within $A$) if there exist $a,b \in A$ satisfying the following:
 		\begin{enumerate}
 			\item For every $f \in C(X_A)_+$, we have that $bfa ,afb \in C(X_A)_+$,
 			\item $U = \{x \in X_A \colon ba(x) >0\}$ and $V = \{x \in X_A \colon ab(x) >0\}$, 
 			\item for $x \in U$, $y \in V$, $f \in C_0(V)$ and $g \in C_0(U)$ we have that
 			$$
 			f(\alpha(x))ba(x) = bfa(x) \textrm{  and  } g(\alpha^{-1}(y))ab(y) = agb(y)\,.
 			$$
 		\end{enumerate}
 		We then say that $\mathrm{n} = (a,b)$ is an \emph{admissible pair} that \emph{realizes} $\alpha$, and we write $\alpha_\mathrm{n}$, $U_\mathrm{n}$ and $V_\mathrm{n}$ for $\alpha$, $U$ and $V$ respectively.
 	\end{definition}

 	The following lemma is straightforward to prove.
 	\begin{lemma}
 		Let A be a unital $L^{p}$-operator algebra and let $\textrm{n}=(a,b)$ be an admissible pair, then given $z\in \mathbb{C}$ the pair $z\cdot \textrm{n}:=(za,\overline{z}b)$ is an admissible pair.
 	\end{lemma}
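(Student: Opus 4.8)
The plan is to verify the three conditions of Definition~\ref{normal} directly for the pair $z\cdot\mathrm{n}=(za,\overline{z}b)$, using that scalars are central in the complex algebra $A$. The one computation driving everything is that, for every $f\in C(X_A)_+$,
$$(\overline{z}b)\,f\,(za)=|z|^2\,bfa\qquad\text{and}\qquad (za)\,f\,(\overline{z}b)=|z|^2\,afb\,.$$
Since $|z|^2\ge 0$ and $C(X_A)_+$ is closed under multiplication by non-negative reals, condition~(1) for $z\cdot\mathrm{n}$ will follow immediately from condition~(1) for $\mathrm{n}$.

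Next I would handle the sets and the homeomorphism. Taking $f$ to be the unit of $A$ in the identity above gives $(\overline{z}b)(za)=|z|^2\,ba$ and $(za)(\overline{z}b)=|z|^2\,ab$, viewed as elements of $\mathrm{core}(A)=C(X_A)$ via the Gelfand transform. If $z=0$ then $z\cdot\mathrm{n}=(0,0)$, so $U_{z\cdot\mathrm{n}}=V_{z\cdot\mathrm{n}}=\emptyset$ and $z\cdot\mathrm{n}$ realizes the empty homeomorphism, and there is nothing more to check. If $z\neq 0$ then $|z|^2>0$, so that $\{x\in X_A:(\overline{z}b)(za)(x)>0\}=\{x\in X_A:ba(x)>0\}=U_{\mathrm{n}}$ and similarly $V_{z\cdot\mathrm{n}}=V_{\mathrm{n}}$, which is condition~(2). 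For condition~(3) I would simply multiply the two defining identities $f(\alpha_{\mathrm{n}}(x))\,ba(x)=bfa(x)$ and $g(\alpha_{\mathrm{n}}^{-1}(y))\,ab(y)=agb(y)$ through by $|z|^2$ and invoke the displayed identity; this shows $z\cdot\mathrm{n}$ realizes the very same homeomorphism $\alpha_{\mathrm{n}}\colon U_{\mathrm{n}}\to V_{\mathrm{n}}$, i.e.\ $\alpha_{z\cdot\mathrm{n}}=\alpha_{\mathrm{n}}$.

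I do not expect any genuine obstacle: the whole argument is a scaling by the positive constant $|z|^2$, so each defining property of $\mathrm{n}$ transfers verbatim to $z\cdot\mathrm{n}$. The only point worth a sentence is the degenerate case $z=0$, handled above by observing that $(0,0)$ is an admissible pair realizing the empty homeomorphism.
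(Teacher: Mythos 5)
Your proof is correct and is exactly the direct verification the paper has in mind when it calls the lemma ``straightforward'': conditions (1)--(3) of Definition~\ref{normal} are preserved because conjugate scalars pair up to give the non-negative factor $|z|^2$, which does not change positivity, the supports $U_{\mathrm{n}}$, $V_{\mathrm{n}}$ (for $z\neq 0$), or the realized homeomorphism $\alpha_{\mathrm{n}}$. Your separate treatment of the degenerate case $z=0$ (the zero pair realizing the empty homeomorphism) is a reasonable reading of the statement as given.
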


 	The admissible pairs  will play the role as the $L^p$-analogue of the normalizers used by Renault in \cite{renault2008cartan} in the context of Cartan pairs of $C^\ast$-algebras, i.e., the pair $(a,b)$ replaces the pair $(a,a^\ast)$ where $a$ is a normalizer. In our setting of $L^p$-operator algebras, there are a number of difficulties arising from the lack of canonical involution.
 	First we recall some of the properties of admissible pairs shown in  \cite[Proposition 3.2]{choi2021rigidity}. 
 	\begin{proposition}
 		\label{norm_prop}
 		Let $p \in [1,\infty)\setminus\{2\}$, let $A$ be a unital $L^{p}$-operator algebra and let $\mathrm{n} = (a,b)$ and $\mathrm{m} = (c,d)$ be two admissible pairs in $A$ that realize $\alpha_\mathrm{n}$ and $\alpha_\mathrm{m}$, respectively.
 		\begin{enumerate}
 			\item The inverse of $\alpha_\mathrm{n}$ is realized by the reverse of $\mathrm{n}$ which is defined as $\mathrm{n}^{\sharp} = (b,a)$.
 			\item The product $\mathrm{n}\mathrm{m} = (ac,db)$ realizes the composition
 			$$
 			\alpha_\mathrm{n} \circ \alpha_\mathrm{m} \restriction_{U_\mathrm{m} \cap \alpha_\mathrm{m}^{-1}(U_\mathrm{n})} \colon U_\mathrm{m} \cap \alpha_\mathrm{m}^{-1}(U_\mathrm{n}) \rightarrow U_\mathrm{n} \cap \alpha_\mathrm{n}(V_\mathrm{m})\,.
 			$$
 			\item For every $f \in C(X_A)$, the pair $\mathrm{n}_f = (f,\overline{f})$ is admissible and realizes $\alpha_{\mathrm{n}_f}= Id_{U}$ for $U=\textrm{supp}(f)$. In particular, the identity map on every open set of $X_A$ is realizable. 
 		\end{enumerate}
 	\end{proposition}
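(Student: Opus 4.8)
Since Definition \ref{normal} is, after the Gelfand identification $\textrm{core}(A)\cong C(X_A)$, precisely the notion of realizable homeomorphism used in \cite[Section 3]{choi2021rigidity}, the quickest option is to quote \cite[Proposition 3.2]{choi2021rigidity}. If one wants a self-contained proof, the plan is to verify directly the three conditions of Definition \ref{normal} for the relevant pairs. Throughout one uses that $\textrm{core}(A)=C(X_A)$ is a commutative unital $*$-subalgebra (Theorem \ref{core}), that multiplication in $A$ is associative, and that for any admissible pair $(a,b)$ and $f\in C(X_A)_+$ one has the pointwise estimates $bfa\le\|f\|_\infty\,ba$ and $afb\le\|f\|_\infty\,ab$ on $X_A$ (apply condition (1) of Definition \ref{normal} to $\|f\|_\infty\cdot 1-f\ge 0$). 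In particular $bfa$ and $afb$ are supported in $U_\mathrm{n}=\{ba>0\}$ and $V_\mathrm{n}=\{ab>0\}$ respectively, so the identities of condition (3) become equalities of functions on all of $X_A$, both sides extended by $0$ outside the relevant open set.

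Parts (1) and (3) are then quick. For (1): the three conditions defining $\mathrm{n}^{\sharp}=(b,a)$ as a realization of $\alpha_\mathrm{n}^{-1}\colon V_\mathrm{n}\to U_\mathrm{n}$ are obtained from those defining $\mathrm{n}=(a,b)$ as a realization of $\alpha_\mathrm{n}\colon U_\mathrm{n}\to V_\mathrm{n}$ by simultaneously interchanging $a\leftrightarrow b$, $U_\mathrm{n}\leftrightarrow V_\mathrm{n}$, $\alpha_\mathrm{n}\leftrightarrow\alpha_\mathrm{n}^{-1}$ and the two test functions, and each of the three conditions is invariant under this interchange. For (3): for $g\in C(X_A)_+$ we have $\overline{f}gf=fg\overline{f}=|f|^2g\in C(X_A)_+$, the element $\overline{f}f=f\overline{f}=|f|^2$ satisfies $\{|f|^2>0\}=\{f\ne 0\}$, and the two identities of condition (3) collapse to the tautology $g(x)|f(x)|^2=|f(x)|^2g(x)$; hence $\mathrm{n}_f=(f,\overline{f})$ is admissible and realizes $\textrm{id}$ on $\{f\ne 0\}$, which gives the last assertion.

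The substantive part is (2). Put $W:=U_\mathrm{m}\cap\alpha_\mathrm{m}^{-1}(U_\mathrm{n})$ and $\beta:=\alpha_\mathrm{n}\circ\alpha_\mathrm{m}\colon W\to\beta(W)=\alpha_\mathrm{n}(U_\mathrm{n}\cap V_\mathrm{m})$; the claim is that $\mathrm{n}\mathrm{m}=(ac,db)$ realizes $\beta$. Condition (1) for $(ac,db)$ follows by grouping $(db)f(ac)=d(bfa)c$ and using condition (1) for $\mathrm{n}$ and then for $\mathrm{m}$ (and symmetrically $(ac)f(db)=a(cfd)b$). The engine for conditions (2) and (3) is the extended intertwining identity $dhc=(h\circ\alpha_\mathrm{m})\cdot dc$ on $X_A$, valid for \emph{every} $h\in C(X_A)$, not only for $h\in C_0(V_\mathrm{m})$. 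This rests on the localization statement that $dkc=0$ whenever $k\in C(X_A)$ vanishes on $V_\mathrm{m}$, which is clean: $dkc\in\textrm{core}(A)$ is supported in $U_\mathrm{m}=\{dc>0\}$ by the estimate above, while $k(cd)=0$ since $cd$ vanishes off $V_\mathrm{m}$, whence commutativity of $\textrm{core}(A)$ gives $(dc)^2(dkc)=d\big((cd)^2k\big)c=0$, so $dkc$ also vanishes on $U_\mathrm{m}$, hence identically; a Urysohn-type cutoff argument on the (compact Hausdorff, so normal) space $X_A$ upgrades this to the pointwise identity $dhc(x)=h(\alpha_\mathrm{m}(x))\,dc(x)$ for $x\in U_\mathrm{m}$. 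Granting the lemma: applying it to $h=ba$ gives $(db)(ac)=d(ba)c=(ba\circ\alpha_\mathrm{m})\cdot dc$, so $\{(db)(ac)>0\}=W$ (using $\{ba>0\}=U_\mathrm{n}$); and for $f\in C_0(\beta(W))\subseteq C_0(V_\mathrm{n})$, condition (3) for $\mathrm{n}$ gives $bfa=(f\circ\alpha_\mathrm{n})\cdot ba$, so the lemma applied to $h=bfa$ yields $(db)f(ac)=d(bfa)c=(f\circ\beta)\cdot(ba\circ\alpha_\mathrm{m})\cdot dc=(f\circ\beta)\cdot(db)(ac)$ on $W$, the intertwining half of condition (3). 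The remaining halves, involving $(ac)(db)=a(cd)b$ and $(ac)g(db)=a(cgd)b$ for $g\in C_0(W)$, follow from the mirror-image lemma $ahb=(h\circ\alpha_\mathrm{n}^{-1})\cdot ab$. The hard part will be precisely this extended intertwining lemma — the cutoff step and the attendant support bookkeeping — together with verifying cleanly that $\beta$, with domain $W$ and range $\beta(W)$, is the homeomorphism realized; once these are in hand, (2) is purely formal.
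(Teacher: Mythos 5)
The paper gives no proof of this proposition at all --- it is quoted directly from \cite[Proposition 3.2]{choi2021rigidity} --- so your first option (simply citing that result) is exactly what the paper does. Your supplementary self-contained sketch is essentially sound, with one caveat in the ``cutoff step'' of the extended intertwining lemma: the global vanishing statement ($dkc=0$ whenever $k$ vanishes on all of $V_{\mathrm{m}}$) does not apply to $k=h(1-\chi)$, which vanishes only near $\alpha_{\mathrm{m}}(x_0)$ rather than on all of $V_{\mathrm{m}}$; what you actually need is the local variant, which follows more directly from the identity $(dc)\cdot(dhc)=d\bigl((cd)h\bigr)c$ together with condition (3) for $\mathrm{m}$ applied to $(cd)h\in C_0(V_{\mathrm{m}})$, giving $dc(x)\,dhc(x)=(cd)(\alpha_{\mathrm{m}}(x))\,h(\alpha_{\mathrm{m}}(x))\,dc(x)$ and hence $dhc(x)=h(\alpha_{\mathrm{m}}(x))\,dc(x)$ on $U_{\mathrm{m}}$ without any Urysohn argument.
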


 	The following corollary follows from Proposition \ref{norm_prop}.
 	
 	\begin{corollary}
 		Let $A$ be a unital $L^{p}$-operator algebra, then the set of realizable partial homeomorphism on $X_A$, denoted by $N(A)$, is an inverse subsemigroup of $\textrm{Homeo}_{par}(X_A)$. 
 	\end{corollary}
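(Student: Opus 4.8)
The plan is to deduce the statement directly from Proposition~\ref{norm_prop}, using that $\textrm{Homeo}_{par}(X_A)$ is already known to be an inverse semigroup. Thus it suffices to verify that $N(A)$ is a nonempty subsemigroup of $\textrm{Homeo}_{par}(X_A)$ that is closed under the involution $\varphi \mapsto \varphi^{-1}$; the inverse semigroup axioms for $N(A)$ are then inherited from the ambient inverse semigroup.

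First I would note that $N(A)$ is nonempty: taking $f \equiv 1$ in part (3) of Proposition~\ref{norm_prop}, the pair $(1,1)$ is admissible and realizes $\mathrm{Id}_{X_A}$, so $\mathrm{Id}_{X_A} \in N(A)$. For closure under composition, recall that the product of two partial homeomorphisms $\varphi \colon U_1 \to V_1$ and $\psi \colon U_2 \to V_2$ in $\textrm{Homeo}_{par}(X_A)$ is $\varphi \circ \psi$ with domain $\psi^{-1}(U_1 \cap V_2)$. Given $\alpha_\mathrm{n}, \alpha_\mathrm{m} \in N(A)$ realized by admissible pairs $\mathrm{n}$ and $\mathrm{m}$, part (2) of Proposition~\ref{norm_prop} asserts that the admissible pair $\mathrm{n}\mathrm{m}$ realizes $\alpha_\mathrm{n} \circ \alpha_\mathrm{m}\restriction_{U_\mathrm{m} \cap \alpha_\mathrm{m}^{-1}(U_\mathrm{n})}$, which is precisely this product. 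Hence $\alpha_\mathrm{n}\alpha_\mathrm{m} \in N(A)$, so $N(A)$ is a subsemigroup. Closure under the involution is immediate from part (1): if $\alpha \in N(A)$ is realized by $\mathrm{n} = (a,b)$, then $\alpha^{-1}$ is realized by the reverse pair $\mathrm{n}^\sharp = (b,a)$, whence $\alpha^{-1} \in N(A)$.

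Combining these observations, $N(A)$ is a subsemigroup of $\textrm{Homeo}_{par}(X_A)$ that is closed under the involution, and so for each $\alpha \in N(A)$ the element $\alpha^{-1} \in N(A)$ is its (necessarily unique) semigroup inverse; therefore $N(A)$ is an inverse subsemigroup. There is no genuine difficulty in this argument; the only point meriting attention is to match the domain-restricted composition appearing in Proposition~\ref{norm_prop}(2) with the semigroup operation of $\textrm{Homeo}_{par}(X_A)$, and to invoke part (3) to see that $N(A)$ contains at least the identities.
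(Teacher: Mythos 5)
Your proof is correct and follows the same route as the paper, which simply derives the corollary from the three parts of Proposition~\ref{norm_prop}. You supply the (routine) details the paper leaves implicit — matching the restricted composition in part (2) with the semigroup product in $\textrm{Homeo}_{par}(X_A)$ and inheriting uniqueness of inverses from the ambient inverse semigroup — and these details check out.
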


 	\begin{definition}
 		Let $A$ be a unital $L^p$-operator algebra. We define the \emph{Weyl groupoid of $A$}, denoted by $\mathcal{G}_A$, to be the groupoid of germs of $N(A)$. 
 	\end{definition}
 	
 	\subsection{The Weyl groupoid of $F_\lambda^p(\mathcal{G};\mathcal{E})$.}
 	Next we will show the relationship between two classes of partial homeomorphisms on $\mathcal{G}^{(0)}$, the ones induced by open bisections and the ones realized by admissible pairs of $F_\lambda^p(\mathcal{G};\mathcal{E})$.

 	Given a topological space $X$, let $U \subseteq X$ be an open set, we say $U$ is a  cozero set if there exists a continuous function $f \colon X \rightarrow \mathbb{C}$ such that $U = \textrm{supp}(f)$
 	
 	\begin{proposition}
 		\label{admissible}
 		Let $p \in [1,\infty)\setminus\{2\}$, and let $\mathcal{E}$ be a twist over a locally compact, Hausdorff, étale groupoid $\mathcal{G}$ with compact unit space. Let $B$ be an open bisection of $\mathcal{G}$ with associated partial homeomorphism $\beta_B \colon s(B) \rightarrow r(B)$. Let $U \subseteq \mathcal{G}^{(0)}$ be a cozero set. Then the restriction of $\beta_B$ to U is realizable by an admissible pair in $F^p_\lambda(\mathcal{G};\mathcal{E})$.
 	\end{proposition}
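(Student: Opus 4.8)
The plan is to follow the proof of the untwisted analogue in \cite{choi2021rigidity}, carrying the $\mathbb{T}$-equivariance and the phase cocycles $z(\cdot,\cdot)$ through every computation. Since, by condition (2) of Definition \ref{normal}, the domain of the partial homeomorphism we realize must be the cozero set of an element of $C(\mathcal{G}^{(0)})$, and since $\beta_B$ only ever sees $U\cap s(B)$, I would first reduce to the case $U\subseteq s(B)$ and fix $g\in C(\mathcal{G}^{(0)})_+$ whose cozero set is exactly $U$; put $h:=g^{1/2}\in C(\mathcal{G}^{(0)})_+$.

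Next I would build the candidate pair $\mathrm{n}=(a,b)$. The element $a\in F^p_\lambda(\mathcal{G};\mathcal{E})$ should be supported on $\pi^{-1}(B)$ and should satisfy, under the identifications of Proposition \ref{map} and Theorem \ref{rigid_core}, $|a(\varepsilon)|=h(s(\pi(\varepsilon)))$ for all $\varepsilon\in\pi^{-1}(B)$. To produce it, note that $s\colon B\to s(B)$ is a homeomorphism and that each set $\{h\ge\delta\}$ with $\delta>0$ is compact and contained in $U\subseteq s(B)$, so $h\circ s$ extends by zero to an element of $C_0(\mathcal{G})$ supported in $B$; lifting this to a $\mathbb{T}$-equivariant continuous function on $\pi^{-1}(B)$ by pasting the local continuous sections of $\pi$ furnished by Definition \ref{twist_def}(2) along a partition of unity subordinate to a cover of $B$ by trivializing bisections, one obtains $a$ as a limit in $F^p_\lambda(\mathcal{G};\mathcal{E})$ of elements of $\Sigma_c(\mathcal{G};\mathcal{E})$ supported in $\pi^{-1}(B)$ (if $B$ is relatively compact, $a$ itself lies in $\Sigma_c(\mathcal{G};\mathcal{E})$). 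Here convergence is governed by Lemma \ref{norm}: on functions supported in $\pi^{-1}(B)$ the reduced norm coincides with the supremum norm, so the closed subspace of $F^p_\lambda(\mathcal{G};\mathcal{E})$ of elements supported in $\pi^{-1}(B)$ is isometrically the space of $C_0$-sections of the complex line bundle over $B$, and the same holds for $\pi^{-1}(B^{-1})$; hence $b:=\lim_n a_n^{\ast}$, where $a_n^{\ast}(\varepsilon)=\overline{a_n(\varepsilon^{-1})}$, is a well-defined element of $F^p_\lambda(\mathcal{G};\mathcal{E})$ supported in $\pi^{-1}(B^{-1})$.

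Finally I would verify conditions (1)--(3) of Definition \ref{normal} for $\mathrm{n}=(a,b)$ by computing products through Proposition \ref{multi}. Since $B$ is a bisection, $BB^{-1}\cup B^{-1}B\subseteq\mathcal{G}^{(0)}$, so for every $f\in C(\mathcal{G}^{(0)})_+$ the products $bfa$ and $afb$ are supported in $\mathcal{E}^{(0)}$ and hence, by Theorem \ref{rigid_core} and the identification (\ref{core-id}), lie in $C(\mathcal{G}^{(0)})$; in each convolution formula of Proposition \ref{multi} a single term survives (one composable path through a bisection, cf. (\ref{section_conv})), and because $b$ is the involute of $a$ the phase factors cancel against the equivariance of $a$, leaving $bfa(x)=g(x)\,f(\beta_B(x))\ge 0$ and $afb(y)=g(\beta_B^{-1}(y))\,f(\beta_B^{-1}(y))\ge 0$, which is condition (1). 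Taking $f$ to be the unit yields $ba(x)=g(x)$ and $ab(y)=g(\beta_B^{-1}(y))$, so $\{x:ba(x)>0\}=U$ and $\{y:ab(y)>0\}=\beta_B(U)$, which is condition (2); and taking $f\in C_0(\beta_B(U))$, respectively $f\in C_0(U)$, in the same identities gives condition (3), so that $\mathrm{n}$ realizes $\beta_B|_U$. I expect the main obstacle to be the middle step: producing $a$ honestly, i.e. assembling from the local data of Definition \ref{twist_def}(2) a globally defined $\mathbb{T}$-equivariant continuous function on $\pi^{-1}(B)$ with the prescribed modulus and support, and, when $B$ is not relatively compact, controlling the resulting limit via Lemma \ref{norm}; a secondary but delicate point, absent from \cite{choi2021rigidity}, is the bookkeeping of the phase cocycles $z(\cdot,\cdot)$ needed to confirm that $bfa,afb\in C(\mathcal{G}^{(0)})_+$ and that the intertwining identities of Definition \ref{normal}(3) hold exactly.
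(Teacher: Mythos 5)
Your proposal follows essentially the same route as the paper's proof: shrink $B$ so that its source sits inside $U$, take $a = u\cdot (h\circ s\circ\pi)$ for a unit-modulus $\mathbb{T}$-equivariant phase $u$ on $\pi^{-1}(B)$ and $b$ the involute of $a$ (which is exactly the paper's $b(\alpha)=\overline{u(\alpha^{-1})h(r(\alpha))}$), use Lemma \ref{norm} to see both lie in $F^p_\lambda(\mathcal{G};\mathcal{E})$, and verify conditions (1)--(3) of Definition \ref{normal} by the single-surviving-term convolution computations over the bisection. The only caveat is your construction of the phase: pasting local trivializations along a partition of unity can produce cancellation and hence zeros, whereas the paper simply \emph{chooses} a non-vanishing continuous $\mathbb{T}$-equivariant function on $\pi^{-1}(B)$ outright; you correctly flag this as the delicate point, and it does not affect the remainder of the argument.
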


 	\begin{proof}
 		We can without loss of generality replace $B$ with $\{\gamma \in B \colon s(\gamma) \in U\}$ and assume $s(B) \subseteq  U$. We then need to prove that $\beta_B$, given as in Remark \ref{beta}, is realizable by an admissible pair in $F^p_\lambda(\mathcal{G};\mathcal{E})$. Let $h \in C(\mathcal{G}^{(0)})$ be any function such that $\textrm{supp}(h) = s(B)$. Choose a non-vanishing continuous $\mathbb{T}$-equivariant function $u \colon \pi^{-1}(B) \rightarrow \mathbb{C}$. Replace $u(\gamma)$ with $u(\gamma) / |u(\gamma)|$ and we may assume that $|u(\gamma)| = 1$. Define the functions $a,b \colon \mathcal{E} \rightarrow \mathbb{C}$ as 
 		\begin{equation*}
 			a(\alpha) =
 			\begin{cases} 
 				u(\alpha)h(s(\alpha)) & \textrm{if}\, \pi(\alpha) \in B \\
 				0 & otherwise\,, \\
 			\end{cases} 
 			\qquad\text{and}\qquad
 			b(\alpha) =
 			\begin{cases} 
 				\overline{u(\alpha^{-1})h(r(\alpha))} & \textrm{if}\, \pi(\alpha^{-1}) \in B \\
 				0 & otherwise\,, \\
 			\end{cases}
 		\end{equation*}
 		for all $\alpha\in \mathcal{E}$.
 		The functions $a$ and $b$ are $\mathbb{T}$-equivariant. Since both functions $a,b$ have support on $\pi^{-1}(B)$ it means the $I$-norm is equal to the $\infty$-norm for $a$ and $b$ by Lemma \ref{norm}. It follows that $a,b \in F^p_\lambda(\mathcal{G},\mathcal{E})$ since they are $I$-norm limits of elements of elements in $C_c(\pi^{-1}(B))$.

 		We then want to show that $\textrm{n}= (a,b)$ is the admissible pair that realizes $\beta_B$. First we will prove that $\textrm{n}$ satisfy the axioms in Definition \ref{normal}. Let $f \in  C(\mathcal{G}^{(0)})_{+}$ and let $\gamma\in \mathcal{E}$. Then 
 		$$
 		bfa(\gamma) = \int_{\mathcal{E}_{s(\gamma)}} b(\gamma\alpha^{-1})f(r(\alpha))a(\alpha)d\nu_{s(\alpha)}\,.
 		$$
 		Now, if $bfa(\gamma) \neq 0$, then there exists an element $\alpha \in \mathcal{E}_{s(\gamma)}$ such that $b(\gamma\alpha^{-1})f(r(\alpha))a(\alpha) \neq 0$, then $\pi((\gamma\alpha^{-1})^{-1}) = \pi(\alpha\gamma^{-1}) \in B$ and $\pi(\alpha) \in B$. Since $B$ is an open bisection, the range map restricted to $B$ is injective. This then implies that $\gamma^{-1} = s(\alpha) \in \mathcal{G}^{(0)}$. Thus it follows that $\textrm{supp}(bfa) \subseteq i(\mathcal{G}^{(0)}\times \mathbb{T})$ which means that $bfa \in C(\mathcal{G}^{(0)})$. We then need to show that $bfa$ is positive. Since 
 		$$
 		bfa(x) = \int_{\mathcal{E}_x} b(\alpha^{-1})f(x)a(\alpha)d\nu_x
 		= \int_{\mathcal{E}_x} |h(x)|^2f(x)d\nu_x,
 		$$
 		it follows that $bfa(x) > 0$ for all $x \in \mathcal{G}^{(0)}$, and hence $bfa \in C(\mathcal{G}^{(0)})_+$. Similarly one can show that $afb \in C(\mathcal{G}^{(0)})_+$. Thus the first condition of Definition \ref{normal} holds. 
 		
 		Now, to show the second condition note that from the first condition, it follows that $ab,ba \in C(\mathcal{G}^{(0)})_{
 			+}$. Let $x \in s(B)$. Since $B$ is an open bisection, let $\alpha_0 \in B$ be the unique element such that $s(\alpha_0) = x$ and let $S(\alpha_0) \in \pi^{-1}(\alpha_0)$. Then
 		
 		$$
 		ba(x) = \int_{\mathcal{E}_x}b(\alpha^{-1})a(\alpha)d\nu_{x} =  b(S(\alpha_0)^{-1})a(S(\alpha_0)) = |h(x)|^{2}\,.
 		$$
 		It follows that $ba(x) = |h(x)|^2$. On the other hand if $x \in \mathcal{G}^{(0)} \setminus{s(B)}$, then 
 		$$
 		ba(x) =  \int_{\mathcal{E}_x}b(\alpha^{-1})a(\alpha) d\nu_{x} = 0\,,
 		$$
 		since $b(\alpha^{-1})$ and  $a(\alpha)$ are zero for all $\alpha \in \mathcal{E}_x$. Thus, $ba = |h|^2$, and it follows that $s(B) = \textrm{supp}(h) = \textrm{supp}(ba)$.
 		Similarly, for $x \in s(B)$ we can show that $ab(\beta_B(x))=|h(x)|^2$, and for $x \in \mathcal{G}^{(0)} \setminus{s(B)}$ that $ab(\beta_B(x))= 0$, and so $|h|^2 = |ab \circ \beta_B|$. Thus, $r(B) = \beta_B(s(B)) = \textrm{supp}(ab)$.
 		
 		Finally, to show the third condition, let $x \in s(B)$ and let $f \in C_0(r(B))$. Since $B$ is an open bisection, there is a unique element $\alpha_0 \in B$ such that $s(\alpha_0) = x$. Then we have 
 		\begin{align*}
 			bfa(x) &= \int_{\mathcal{E}_{x}} b(\alpha^{-1})f(r(\alpha))a(\alpha)d\nu_x = \sum_{\gamma\in \mathcal{G}_{x}} b(S(\alpha)^{-1})f(r(\alpha))a(S(\alpha)) \\
 			&= b(S(\alpha_0)^{-1})f(r(\alpha_0))a(S(\alpha_0)) =  f(\beta_B(x))|h(x)|^2 = f(\beta_B(x))ba(x)\,.
 		\end{align*}
 		Similarly, for $y \in r(B)$ and $g \in C_0(s(B))$ it follows that $g(\beta_B^{-1}(y))ab(y) = agb(y)$. Hence $\textrm{n}=(a,b)$ is the admissible pair that realizes $\beta_B$, and the result follows.
 	\end{proof}
 	
 	\begin{remark}
 		If $\mathcal{G}^{(0)}$ is metrizable, then every open set is a cozero set. In general we have that $\mathcal{G}^{(0)}$ is a compact Hausdorff space so we have that for every $x \in \mathcal{G}^{(0)}$ there exists an open cozero neighborhood $U$ of $x$.
 	\end{remark}
 	
 	The next proposition makes the "converse" relation. We find an open bisection given an admissible pair.
 	
 	\begin{proposition}
 		\label{bisection}
 		Let $p \in [1,\infty)\setminus\{2\}$, and let $\mathcal{E}$ be a twist over a topologically principal, locally compact, Hausdorff, étale groupoid $\mathcal{G}$ with compact unit space. Let $\textrm{n}=(a,b)$ be an admissible pair in $F^p_\lambda(\mathcal{G}; \mathcal{E})$. Set 
 		$$
 		B_\textrm{n} = \{\gamma \in \mathcal{G} \colon a(S(\gamma)),b(S(\gamma)^{-1}) \neq 0\}.
 		$$
 		Then $B_\textrm{n}$ is an open bisection of $\mathcal{G}$ and $\alpha_\textrm{n} = \beta_{B_\textrm{n}}$.
 	\end{proposition}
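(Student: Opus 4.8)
The plan is to work with $a$ and $b$ as $\mathbb{T}$-equivariant functions on $\mathcal{E}$ via the injective contraction $j$ from Proposition \ref{map}, and to convert Definition \ref{normal} into combinatorial information about the fibres of $\pi$. For $\eta\in\mathcal{G}$ set $c_\eta:=a(S(\eta))\,b(S(\eta)^{-1})$; since $a,b$ are $\mathbb{T}$-equivariant (uniform limits of elements of $\Sigma_c(\mathcal{G};\mathcal{E})$), $c_\eta$ is independent of the choice of section, and $c_\eta\neq 0$ exactly when $\eta\in B_\textrm{n}$. Two facts are immediate: first, $B_\textrm{n}$ is open, because $|a|,|b|$ descend to continuous functions on $\mathcal{G}$, $\pi$ is an open map, and $B_\textrm{n}=\pi(\{a\neq 0\})\cap\pi(\{b\neq 0\})^{-1}$; second, using Proposition \ref{multi} together with the fact that an element $f$ of the core $C(\mathcal{G}^{(0)})$ acts on $C_0(\mathcal{E})$ by $(fh)(\gamma)=f(r(\gamma))h(\gamma)$ and $(hf)(\gamma)=f(s(\gamma))h(\gamma)$, one obtains for all $f\in C(\mathcal{G}^{(0)})$
\[
bfa(x)=\sum_{\eta\in B_\textrm{n}\cap\mathcal{G}_x}c_\eta\,f(r(\eta)),\qquad
afb(z)=\sum_{\eta\in B_\textrm{n}\cap\mathcal{G}^{z}}c_\eta\,f(s(\eta)),
\]
the sums converging absolutely by Hölder's inequality applied to $l_x(a)$ and $r_x(b)$; in particular $ba(x)=\sum_{\eta\in B_\textrm{n}\cap\mathcal{G}_x}c_\eta$ and symmetrically for $ab$.

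Next I would extract positivity. By Definition \ref{normal}(1), $bfa,afb\in C(\mathcal{G}^{(0)})_+$ for every $f\in C(\mathcal{G}^{(0)})_+$, so for fixed $x$ the map $f\mapsto bfa(x)$ is a positive functional on $C(\mathcal{G}^{(0)})$; hence the coset sums $e_{x,y}:=\sum_{\eta\in B_\textrm{n}\cap\mathcal{G}^{y}_{x}}c_\eta$ satisfy $e_{x,y}\ge 0$ for all $x,y$, and $ba(x)=\sum_y e_{x,y}$. Therefore $U_\textrm{n}=\{ba>0\}\subseteq s(B_\textrm{n})$ and $V_\textrm{n}\subseteq r(B_\textrm{n})$. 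Feeding in Definition \ref{normal}(3): for $x\in U_\textrm{n}$ one has $bfa(x)=f(\alpha_\textrm{n}(x))\,ba(x)$ for all $f\in C_0(V_\textrm{n})$, which, compared with the formula above and using $e_{x,y}\ge 0$ and $\sum_y e_{x,y}=ba(x)$, forces $e_{x,y}=0$ for every $y\neq\alpha_\textrm{n}(x)$ and $e_{x,\alpha_\textrm{n}(x)}=ba(x)>0$. Thus for $x\in U_\textrm{n}$ and $\eta\in B_\textrm{n}\cap\mathcal{G}_x$ we have the dichotomy: \emph{either} $r(\eta)=\alpha_\textrm{n}(x)$, \emph{or} the coset $\mathcal{G}^{r(\eta)}_{x}\cap B_\textrm{n}$ has at least two elements. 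Applying the same reasoning to the reverse pair $\textrm{n}^{\sharp}=(b,a)$ — which realises $\alpha_\textrm{n}^{-1}$ by Proposition \ref{norm_prop}(1) and satisfies $B_{\textrm{n}^{\sharp}}=B_\textrm{n}^{-1}$ and $U_{\textrm{n}^{\sharp}}=V_\textrm{n}$ — yields the symmetric dichotomy for ranges.

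The heart of the argument is to kill the ``cancelling coset'' alternative using that $\mathcal{G}$ is topologically principal. Let $D\subseteq\mathcal{G}^{(0)}$ be the dense set of units with trivial isotropy; it is invariant, so for $x\in D$ every coset $\mathcal{G}^{y}_{x}$ has at most one element, whence by the dichotomy, if also $x\in U_\textrm{n}$ then $B_\textrm{n}\cap\mathcal{G}_x$ is a singleton contained in $\mathcal{G}^{\alpha_\textrm{n}(x)}_{x}$. I would then bootstrap this to $s(B_\textrm{n})=U_\textrm{n}$: given $\gamma\in B_\textrm{n}$, choose (using that $\mathcal{G}$ is étale and $B_\textrm{n}$ open) an open bisection $W\subseteq B_\textrm{n}$ with $\gamma\in W$, and write $\eta_W(x')$ for the unique point of $W$ with source $x'$. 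For $x'\in D\cap s(W)$, since $\mathcal{G}^{r(\eta_W(x'))}_{x'}$ is a singleton we get $e_{x',r(\eta_W(x'))}=c_{\eta_W(x')}\ne 0$, hence $ba(x')>0$; so $D\cap s(W)\subseteq U_\textrm{n}$. If some $x_0\in s(W)$ had $ba(x_0)=0$, then all $e_{x_0,y}=0$, so the coset $\mathcal{G}^{r(\eta_W(x_0))}_{x_0}\cap B_\textrm{n}$ (which contains $\eta_W(x_0)$ with $c\ne 0$) would contain a second point $\gamma'$; picking an open bisection $W'\subseteq B_\textrm{n}$ through $\gamma'$ with $W\cap W'=\varnothing$ (possible as $\mathcal{G}$ is Hausdorff) and a common source-neighbourhood $O$ of $x_0$, any $x''\in D\cap O$ would lie in $U_\textrm{n}$ with $B_\textrm{n}\cap\mathcal{G}_{x''}$ a singleton, yet $\eta_W(x'')\ne\eta_{W'}(x'')$ are two of its elements — a contradiction. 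Hence $s(W)\subseteq U_\textrm{n}$, so $s(B_\textrm{n})=U_\textrm{n}$, and dually $r(B_\textrm{n})=V_\textrm{n}$.

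The same disjoint-bisection device finishes the proof. If $\gamma_1\ne\gamma_2\in B_\textrm{n}$ shared a source $x_0\in s(B_\textrm{n})=U_\textrm{n}$, then after shrinking to disjoint open bisections $W_i\subseteq B_\textrm{n}$ through $\gamma_i$ with a common source-neighbourhood $O\subseteq U_\textrm{n}$, for any $x''\in D\cap O$ the points $\eta_{W_1}(x'')\ne\eta_{W_2}(x'')$ would both lie in the singleton $B_\textrm{n}\cap\mathcal{G}_{x''}$ — impossible; thus $s|_{B_\textrm{n}}$ is injective, and applying this to $\textrm{n}^{\sharp}$ gives injectivity of $r|_{B_\textrm{n}}$, so $B_\textrm{n}$ is an open bisection. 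Finally, for $x\in U_\textrm{n}=s(B_\textrm{n})$ the unique $\gamma_x\in B_\textrm{n}\cap\mathcal{G}_x$ must satisfy $r(\gamma_x)=\alpha_\textrm{n}(x)$ — the second alternative of the dichotomy is ruled out because $\mathcal{G}^{r(\gamma_x)}_{x}\cap B_\textrm{n}\subseteq B_\textrm{n}\cap\mathcal{G}_x=\{\gamma_x\}$ — so $\beta_{B_\textrm{n}}=\alpha_\textrm{n}$, as claimed. The step I expect to be the real obstacle is exactly the third paragraph: the positivity coming from Definition \ref{normal} only controls the \emph{sums} $e_{x,y}$, so on its own it cannot rule out distinct elements of $B_\textrm{n}$ with the same source and range; it is the density of trivial-isotropy points that collapses each coset and lets single-valuedness propagate from $D\cap U_\textrm{n}$ to all of $s(B_\textrm{n})$, and getting this bootstrap right (rather than arguing circularly through $U_\textrm{n}$) is the delicate point.
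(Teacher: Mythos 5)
Your argument is correct, but it is organized around a different key step than the paper's. The paper's technical heart is \emph{pointwise} positivity: using topological principality (a trivial-isotropy point in the source of a neighbourhood where $\mathrm{Re}\bigl(b(S(\gamma)^{-1})a(S(\gamma))\bigr)<0$, together with a bump function at its range) it shows $b(S(\gamma)^{-1})a(S(\gamma))\ge 0$ for every single $\gamma\in\mathcal{G}$. Once each term is nonnegative, the rest is short: $s(B_\textrm{n})=U_\textrm{n}$ and $r(\gamma)=\alpha_\textrm{n}(s(\gamma))$ for $\gamma\in B_\textrm{n}$ drop out immediately, and the bisection property is obtained by noting that $B_\textrm{n}B_\textrm{n}^{-1}$ and $B_\textrm{n}^{-1}B_\textrm{n}$ are open subsets of the isotropy bundle and invoking effectiveness. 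You never establish pointwise positivity; you extract only positivity of the coset sums $e_{x,y}$ (which does require the bump-function approximation using the absolute convergence you cite, but that is routine), and you compensate with the dichotomy, the density of trivial-isotropy points, and the disjoint-bisection bootstrap. Your route is combinatorially heavier but isolates cleanly where topological principality and Hausdorffness enter (collapsing cosets at trivial-isotropy points and propagating single-valuedness by density), whereas the paper uses these hypotheses twice, once for pointwise positivity and once through effectiveness. Both proofs consume the same inputs --- conditions (1)--(3) of Definition \ref{normal}, openness of $\pi$, Hölder summability of the fibrewise coefficients, and topological principality --- and both correctly identify that the only genuine obstacle is ruling out cancellation within a coset; you are right that this is the delicate point, and your bootstrap (first $D\cap s(W)\subseteq U_\textrm{n}$, then all of $s(W)$) avoids the circularity you were worried about. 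The one place where you are terser than you should be is the choice of $W'$ disjoint from $W$: since $\gamma'\notin W$ does not by itself give a neighbourhood of $\gamma'$ disjoint from $W$, one must also shrink $W$ around $\eta_W(x_0)$ using Hausdorff separation before intersecting sources; this is harmless but worth saying.
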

 	
 	We will call $B_\textrm{n}$ \emph{the open bisection induced by} $\textrm{n}$.
 	
 	\begin{proof}
 		First note that if $a(\gamma) = 0$, then $a(\varepsilon) = 0$ for all $\varepsilon \in \mathcal{E}$ with $\pi(\gamma) = \pi(\varepsilon)$.
 		
 		
 		Since $(a,b)$ is an admissible pair we have that $ba\in C_0(\mathcal{E})_+$  with  $\text{supp}(ba) \subseteq i(\mathcal{G}^{(0)} \times \mathbb{T})$. Observe that $b(\gamma^{-1})a(\gamma) = b(\alpha^{-1})a(\alpha)$ for any $\alpha ,\gamma \in \mathcal{E}$ with $\pi(\alpha)=\pi(\gamma)$. Then given $x\in \mathcal{G}^{(0)}$ we have that
 		
 		$$
 		ba(x)= \sum_{\gamma \in \mathcal{G}_x} b(S(\gamma)^{-1})a(S(\gamma)) \geq 0 \,. 
 		$$
 		We claim that $b(S(\gamma)^{-1})a(S(\gamma)) \geq 0$ for any $\gamma \in \mathcal{G}$. First, assume that there exists $\gamma\in \mathcal{G}$ such that $\textrm{Re}(b(S(\gamma)^{-1})a(S(\gamma))) < 0$. 
 		Since $a$ and $b$ are continuous, there is an open neighborhood $U$ of $S(\gamma)$ such that $\textrm{Re}(b(\xi^{-1})a(\xi))<0$ for all $\xi \in U$. Set $V = s(U)$, which is an open subset of $\mathcal{G}^{(0)}$. Since $\mathcal{G}$ is topological principal, there is a $x_0 \in V$ with trivial isotropy. Let $\gamma_0 \in \pi(U)$ with $s(\gamma_0) = x_0$, and set $y=r(\gamma_0)$. Then, since $x_0$ has trivial isotropy, $\gamma_0$ is the unique element in $\mathcal{G}_{x_0}$ with $y$ as it range. Since
 		$$
 		ba(x_0)=\textrm{Re}(ba(x_0)) = \int_{\mathcal{E}_{x_0}} \textrm{Re}(b(\xi^{-1})a(\xi))d\nu_{x_0} =  \sum_{\eta\in \mathcal{G}_{x_0}} \textrm{Re}(b(S(\eta)^{-1})a(S(\eta)))
 		$$ 
 		converges absolutely, the set $\{\eta\in \mathcal{G}_{x_0} \colon \textrm{Re}(b(S(\eta)^{-1})a(S(\eta)))\neq 0 \}$ is at most countable. Set $t = \textrm{Re}(b(S(\gamma_0)^{-1})a(S(\gamma_0)))< 0$, and choose a neighborhood $W$ of $y$ such that
 		\begin{equation}\label{inequa}
 			\sum_{\eta \in \mathcal{G}_{x_0},r(\eta) \in W\setminus\{y\}} |\textrm{Re}(b(S(\eta)^{-1})a(S(\eta)))| < |t| = -t \,.
 		\end{equation}
 		Choose $f \in C_0(\mathcal{G}^{(0)})_+$ with $0\leq f \leq 1$, $f(y) = 1$ and $\textrm{supp}(f) \subseteq W$. Then using (\ref{inequa}) we have that
 		\begin{align*}
 			bfa(x_0) &= \int_{\mathcal{E}_{x_0}} \textrm{Re}(b(\xi^{-1})f(r(\xi))a(\xi))d\nu_{x_0} = \sum_{\eta \in \mathcal{G}_{x_0}} \textrm{Re}(b(S(\eta)^{-1})f(r(\eta))a(S(\eta))) \\
 			&= \sum_{\eta \in \mathcal{G}_{x_0},r(\eta) \in W\setminus\{y\}}  \textrm{Re}(b(S(\eta)^{-1})f(r(\eta))a(S(\eta))) + \textrm{Re}(b(S(\gamma_0)^{-1})a(S(\gamma_0)))  \\
 			& \leq \sum_{\eta \in \mathcal{G}_{x_0},r(\eta) \in W\setminus\{y\}}  |\textrm{Re}(b(S(\eta)^{-1})a(S(\eta)))| + t <0 \,,
 		\end{align*}
 		which contradicts condition (1) of Definition \ref{normal}, and thus we have that $\textrm{Re}(b(S(\gamma)^{-1})a(S(\gamma))) \geq 0$ for all $\gamma \in \mathcal{G}$. Now in a similar way we can prove that $\textrm{Im}(b(S(\gamma)^{-1})a(S(\gamma))) \geq 0$ for all $\gamma \in \mathcal{G}$, but since 
 		$$0=\textrm{Im}(ba(x))=\sum_{\gamma\in \mathcal{G}_x}\textrm{Im}(b(S(\gamma)^{-1})a(S(\gamma)))\,,$$ 
 		for every $x\in\mathcal{G}^{(0)}$ we have  that $\textrm{Im}(b(S(\gamma)^{-1})a(S(\gamma))) =0$ for all $\gamma \in \mathcal{G}$. Thus, $b(S(\gamma)^{-1})a(S(\gamma))\geq 0$ for every $\gamma\in \mathcal{G}$, as desired.
 		
 		Let us denote 
 		$$B = \{\gamma \in \mathcal{G} \colon a(S(\gamma)),b(S(\gamma)^{-1}) \neq 0\}\,.$$
 		Let $\gamma \in B$. We want to prove that $s(\gamma) \in U_\textrm{n}$ and $r(\gamma) \in V_\textrm{n}$. Set $x = s(\gamma)$. Then we have that 
 		$$
 		ba(x) = \sum_{\xi \in Bx} b(S(\xi)^{-1})a(S(\xi)) > b(S(\gamma)^{-1})a(S(\gamma)) > 0
 		$$
 		which by (2) in Definition \ref{normal} implies that $x \in U_\textrm{n}$. Similarly it follows that $y = r(\gamma) \in V_\textrm{n}$.

 		Let $\gamma_0 \in B$ and set $x = s(\gamma_0)$. We want to show that $r(\gamma_0) = \alpha_\textrm{n}(x)$. Assume that $r(\gamma_0) \neq \alpha_\textrm{n}(x)$. Choose $f \in C(V_\textrm{n})_+$, with $f(\alpha_\textrm{n}(x)) = 0$ and $f(r(\gamma_0)) = 1$, then
 		\begin{align*}
 			f(\alpha_\textrm{n}(x)) &= \frac{bfa(x)}{ba(x)} = \int_{\mathcal{E}_{x}} \frac{ b(\xi^{-1})f(r(\xi))a(\xi)}{ba(x)}d\nu_{s(\gamma)} \\
 			&= \sum_{\gamma \in \mathcal{G}_{x}} \frac{ b(S(\gamma)^{-1})f(r(\gamma))a(S(\gamma))}{ba(x)} >  \frac{b(S(\gamma_0)^{-1})a(S(\gamma_0))}{ba(x)} > 0 \,.
 		\end{align*}

 		Define the set
 		$T = \{ \gamma \in \mathcal{G} \colon s(\gamma) \in U_n\text{ and }r(\gamma) = \alpha_n (s(\gamma))\}$. We have already shown that $B \subseteq T$, which implies that $BB^{-1} \subseteq TT^{-1} \subseteq \mathcal{G}^{\prime} = \{\gamma\in \mathcal{G} \colon s(\gamma) = r(\gamma) \}$. We will show that $BB^{-1}$ is open. Let $i \colon \mathcal{E} \rightarrow \mathcal{E}$ denote the inversion map. Since $a$ and $b \circ i$ are continuous functions on $\mathcal{E}$, we know that their supports are open subsets of $\mathcal{E}$. We then have that $B=\pi(\textrm{supp}(a)\cap\textrm{supp}(b\circ i))$ is open since $\pi$ is an open map. Since $\mathcal{G}$ is an effective groupoid we have that $BB^{-1}$ is an open set contained in $\mathcal{G}^{(0)}$. Similarly one can show that $B^{-1}B$ is an open set contained in $\mathcal{G}^{(0)}$ . It follows that $B$ is an open bisection.
 		
 		Fix $x \in U_{\textrm{n}}$. We want to show that there exists $\gamma_0 \in B$ with $s(\gamma_0) = x$. For any $x \in U_{\textrm{n}}$ we have that
 		$$
 		0 < ba(x) = \sum_{\gamma \in \mathcal{G}_x} b(S(\gamma))a(S(\gamma)^{-1})\,.
 		$$
 		This implies that there exists $\gamma_0 \in \mathcal{G}_x$ with $b(S(\gamma_0))a(S(\gamma_0)^{-1}) > 0$, which means that $\gamma_0 \in B$ and $s(\gamma_0) = x$.
 		
 		We thus have that $s(B) = U_{\textrm{n}}$. It remains to show that $\alpha_{\textrm{n}} = \beta_B$. Let $x \in U_{\textrm{n}}$, and let $\gamma \in B$ be the element such that $s(\gamma) = x$. Then $\beta_B(x) = r(\gamma) = \alpha_{\textrm{n}}(x)$, and the proposition follows.
 	\end{proof}
 	
 	\begin{theorem}
 		\label{groupoid_rec}
 		Let $p \in [1,\infty)\setminus\{2\}$, let $\mathcal{E}$ a twist over a topologically principal, locally compact,  Hausdorff, étale groupoid $\mathcal{G}$ with compact unit space. Then there is a natural identification of groupoids
 		$$
 		\mathcal{G}_{F^p_\lambda(\mathcal{G};\mathcal{E})} \cong \mathcal{G}.
 		$$
 	\end{theorem}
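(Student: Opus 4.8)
The plan is to realize $\mathcal{G}$ as the groupoid of germs of the inverse semigroup $N(F^p_\lambda(\mathcal{G};\mathcal{E}))$ of realizable partial homeomorphisms, by comparing this inverse semigroup with the inverse semigroup $\mathcal{P}(\mathcal{G})$ of partial homeomorphisms of $\mathcal{G}^{(0)}$ coming from open bisections, and then invoking Renault's reconstruction of an effective groupoid from $\mathcal{P}(\mathcal{G})$.

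Write $A = F^p_\lambda(\mathcal{G};\mathcal{E})$. First I would use Theorem \ref{rigid_core} to identify $\textrm{core}(A)$ with $C(\mathcal{G}^{(0)})$, so that the spectrum $X_A$ on which the Weyl groupoid $\mathcal{G}_A = \mathcal{G}(N(A))$ is built is exactly $\mathcal{G}^{(0)}$. Since $\mathcal{G}$ is topologically principal, Hausdorff and étale it is effective, so by Remark \ref{beta} the map $\beta \colon \mathcal{S}(\mathcal{G}) \to \textrm{Homeo}_{par}(\mathcal{G}^{(0)})$ identifies $\mathcal{S}(\mathcal{G})$ bijectively with $\mathcal{P}(\mathcal{G})$ and, by \cite[Corollary 3.4]{renault2008cartan}, the groupoid of germs $\mathcal{G}(\mathcal{P}(\mathcal{G}))$ is naturally isomorphic to $\mathcal{G}$ (a germ $[\beta_B,x]$ corresponding to the unique element of $B$ with source $x$). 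It therefore suffices to show that the tautological map $\mathcal{G}(N(A)) \to \mathcal{G}(\mathcal{P}(\mathcal{G}))$ induced by the inclusion $N(A) \subseteq \mathcal{P}(\mathcal{G})$ is an isomorphism of topological groupoids.

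For this I would first note that Proposition \ref{bisection} gives $N(A) \subseteq \mathcal{P}(\mathcal{G})$: every admissible pair $\mathrm{n}$ has $\alpha_\mathrm{n} = \beta_{B_\mathrm{n}}$ for the open bisection $B_\mathrm{n}$ it induces. Conversely, Proposition \ref{admissible} shows $\beta_B|_U \in N(A)$ whenever $B$ is an open bisection and $U \subseteq s(B)$ is a cozero set; since cozero sets form a basis of the topology of the compact Hausdorff space $\mathcal{G}^{(0)}$, every germ of $\mathcal{P}(\mathcal{G})$ is also a germ of $N(A)$. Consequently the map $[\varphi,x] \mapsto [\varphi,x]$ is well defined, bijective (injectivity because two elements of $N(A)$ have the same germ at $x$ inside $N(A)$ precisely when they do inside $\mathcal{P}(\mathcal{G})$; surjectivity by the cozero-restriction argument just given), and a groupoid homomorphism since composition and inversion of germs are inherited from composition and inversion of partial homeomorphisms. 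Finally it is a homeomorphism: a basic open set $\mathcal{U}(U,\varphi)$ with $\varphi \in N(A)$ maps to the basic open set with the same data, while a basic open set $\mathcal{U}(V,\beta_B)$ of $\mathcal{G}(\mathcal{P}(\mathcal{G}))$ has preimage $\bigcup_i \mathcal{U}(U_i, \beta_B|_{U_i})$ over any cozero cover $\{U_i\}$ of $V$, which is open in $\mathcal{G}(N(A))$. Composing with $\mathcal{G}(\mathcal{P}(\mathcal{G})) \cong \mathcal{G}$ gives the asserted natural identification.

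I expect the only real obstacle to be the bookkeeping in the last step: one has to be careful that restricting to cozero sets in Proposition \ref{admissible} loses no germs and does not distort the germ topology, which is exactly where the compact-Hausdorffness of $\mathcal{G}^{(0)}$ (forcing cozero sets to be a basis) enters. All the genuinely $p \neq 2$ input — the uniqueness and commutativity of the $C^\ast$-core and the dictionary between admissible pairs and open bisections — has already been established in Theorems \ref{core} and \ref{rigid_core} and Propositions \ref{admissible} and \ref{bisection}.
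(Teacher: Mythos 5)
Your proposal is correct and follows essentially the same route as the paper's proof: identify the core with $C(\mathcal{G}^{(0)})$ via Theorem \ref{rigid_core}, use Propositions \ref{bisection} and \ref{admissible} to show the realizable partial homeomorphisms and the bisection-induced ones have the same germs (with cozero neighborhoods supplying the local converse), and conclude via Renault's identification of the groupoid of germs of $\mathcal{P}(\mathcal{G})$ with the effective groupoid $\mathcal{G}$. The extra detail you supply on why the tautological germ map is a bijective homeomorphism is exactly the bookkeeping the paper compresses into ``it follows that the groupoids of germs are isomorphic.''
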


 	\begin{proof}
 		By Theorem \ref{rigid_core} we identify the core of $F^p_\lambda(\mathcal{G};\mathcal{E})$ with $C(\mathcal{G}^{(0)})$. Let $\mathcal{A}$ be the set of all partial homeomorphisms realized by admissible pairs in $F^p_\lambda(\mathcal{G};\mathcal{E})$ and let $\mathcal{B}$ be the family of partial homeomorphisms on  $\mathcal{G}^{(0)}$ induced by open bisections of $\mathcal{G}$. By Proposition \ref{bisection} we have that $\mathcal{A} \subseteq \mathcal{B}$. The converse holds if every open subset of $\mathcal{G}^{(0)}$ is a cozero set, which is not the case in general, but it holds locally, i.e., for every $x \in \mathcal{G}^{(0)}$ there exists a cozero neighborhood $U$ of $x$. Using this fact we have by Proposition \ref{admissible} that for every $\beta \in \mathcal{B}$ and $x \in \mathcal{G}^{(0)}$, there exists an open neighborhood $U$ of $x$ such that $\beta_{\restriction_U} \in \mathcal{A}$. It follows that the groupoids of germs of $\mathcal{A}$ and $\mathcal{B}$ are isomorphic. The groupoids of germs of $\mathcal{A}$ is by definition $\mathcal{G}_{F^p_\lambda(\mathcal{G};\mathcal{E})}$. By \cite[Corollary 3.3]{renault2008cartan}, the groupoid of germs of $\mathcal{B}$ is isomorphic to $\mathcal{G}$, since $\mathcal{G}$ is effective.
 	\end{proof}
 	
 	\begin{remark}
 		\label{exp}
 		Explicitly the isomorphism $\theta\colon \mathcal{G}_{F^p_\lambda(\mathcal{G};\mathcal{E})} \rightarrow \mathcal{G}$ is given as follows. For $[\alpha_{\textrm{n}},x] \in \mathcal{G}_{F^p_\lambda(\mathcal{G};\mathcal{E})}$ we let $B_n$ denote the open bisection of the admissible pair $n$ as in Proposition \ref{bisection}. Then the isomorphism is given by $[\alpha_n,x] \mapsto B_n x$. See the proof of  \cite[Corollary 3.4]{renault2008cartan} for details.
 	\end{remark}
 	
 	We observe that the groupoid of germs does not recover the twist, and we only recover the groupoid as in \cite[Theorem 5.5]{choi2021rigidity}.

 	\subsection{The Weyl twist and $L^p$-rigidity}
 	
 	In the previous section we recovered the groupoid from the algebra. In this section we will see how to recover the twist $\mathcal{E}$ over $\mathcal{G}$ from the algebra. We will follow Renault's line of thought in \cite{renault2008cartan}, where he recovers the twist in the $C^\ast$-algebra setting.
 	
 	Let $A$ be a unital $L^{p}$-operator algebra for $p\in[1,\infty)\setminus\{2\}$, and let $N(A)$ denotes the inverse subsemigroup of realizable partial homeomorphisms of $X_A$. We define
 	$$
 	\mathcal{E}_A:=\{(\textrm{n},x) \in N(A) \times X_A \colon \textrm{n} = (a,b) \in N(A), ba(x)>0\}/ \approx\,,
 	$$
 	where $(\textrm{n},x) \approx (\textrm{m},y)$ whenever $x = y$ and there exist $f,g \in C(X_A)$ with $f(x),g(x) > 0$ such that $\textrm{n}_f \textrm{n} = \textrm{n}_g\textrm{m}$. We denote $[[\textrm{n},x]]$ for the equivalence class of the element $(\textrm{n},x)$. $\mathcal{E}_A$ has a natural groupoid structure. The range and source map are given by $r([[ \textrm{n},x ]]) = \alpha_\textrm{n}(x)$ and $s([[\textrm{n},x ]]) = x$. Multiplication is given by $[[\textrm{n},\alpha_\textrm{m}(x)]][[\textrm{m},x]] = [[\textrm{nm},x ]]$, and inversion  by $[[\textrm{n},x ]]^{-1} = [[\textrm{n}^\sharp,\alpha_\textrm{n}(x) ]]$. The unit space is canonically identified with $X_A$. The quotient becomes a topological groupoid under the topology with basic open sets $\mathcal{U}(\textrm{n},U,V) = \{ [[ z\cdot \textrm{n},x ]]  \colon x \in U\,,z\in V\}$ indexed over $\textrm{n} \in N(A)$ and open sets $U \subseteq U_\textrm{n}$ where $U_\textrm{n}$ is given as in Definition \ref{normal}, and $V\subseteq \mathbb{T}$. We call $\mathcal{E}_A$ the \emph{Weyl twist} of $A$, analogous to Renault´s in \cite{renault2008cartan}.

 	First we will show that multiplication is well defined. Let $(\textrm{n},x) \approx (\textrm{n}^\prime,x)$ and $(\textrm{m},\alpha_\textrm{n}(x)) \approx (\textrm{m}^{\prime},\alpha_\textrm{n}(x))$, where $\textrm{n} = (a,b), \textrm{n}^\prime = (a^\prime, b^\prime), \textrm{m} = (c,d)$ and $\textrm{m}^\prime = (c^\prime, d^\prime)$. Then there exist $f,f^\prime,g,g^\prime \in C(X_A)$ such that $\textrm{n}\textrm{n}_f = \textrm{n}^{\prime}\textrm{n}_{f^\prime}$ and $\textrm{m}\textrm{n}_g = \textrm{m}^\prime \textrm{n}_{g^\prime}$. We want to show that $(\textrm{n}\textrm{m},x) \approx (\textrm{n}^{\prime}\textrm{m}^{\prime},x)$. Note that $\textrm{n}_f$ realizes the identity map on some set $U_{\textrm{n}_f} \subseteq \mathcal{G}^{(0)}$.
 	Since $fa(\gamma) = a(\gamma)f(r(\gamma)) = a(f\circ \alpha_\textrm{m})(\gamma)$, we have that $\textrm{n}_f\textrm{m} = \textrm{m}\textrm{n}_{f\circ \alpha_\textrm{m}}$. Using this fact we have the following:
 	$$
 	\textrm{n}\textrm{n}_f\textrm{m}\textrm{n}_g = \textrm{n}\textrm{m}\textrm{n}_{f\circ \alpha_\textrm{m}}\textrm{n}_g = \textrm{n}\textrm{m}\textrm{n}_{(f\circ \alpha_\textrm{m})g}\,,
 	$$
 	and so
 	$$
 	\textrm{n}\textrm{m}\textrm{n}_{(f\circ \alpha_\textrm{m})g} = \textrm{n}^{\prime}\textrm{m}^{\prime}\textrm{n}_{(f^\prime \circ \alpha_\textrm{m})g^\prime}\,.
 	$$
 	
 	It thus follows that $(\textrm{n}\textrm{m},x) \approx (\textrm{n}^{\prime}\textrm{m}^{\prime},x)$, and hence multiplication is well defined. To show that the groupoid is a topological groupoid, let $U \subseteq \mathcal{G}^{(0)}$, then
 	$$
 	s^{-1}(U) = \bigcup_{\textrm{n}\in N(A) \colon U\cap U_\textrm{n} \neq Ø} \{ [[ \textrm{n},x ]] \colon x \in U\cap U_\textrm{n} \}\,,
 	$$
 	which is union of basic open sets and thus open. It follows that the source map is continuous.

 	\begin{proposition}
 		Let $A$ be a unital $L^p$-operator algebra for $p\in[1,\infty)\setminus\{2\}$. There is an injective continuous groupoid homomorphism $i_A \colon X_A \times \mathbb{T} \rightarrow \mathcal{E}_A$ given by $i_A((x,z)) = [[ \text{n}_f, x ]]$ for $f \in C(X_A)$ such that $f(x) = z$, and there is a continuous surjective groupoid homomorphism $\pi_A \colon \mathcal{E}_A \rightarrow \mathcal{G}_A $ given by $\pi_A([[ \textrm{n}, x ]]) = [\alpha_\textrm{n},x]$.
 	\end{proposition}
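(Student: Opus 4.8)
The plan is to treat $i_A$ and $\pi_A$ in turn, in each case checking first that the formula is independent of the choices involved, then that it respects the groupoid operations, and finally that it is injective/surjective and continuous. The only algebraic inputs are Proposition~\ref{norm_prop} and the two identities $\textrm{n}_f\textrm{n}_g=\textrm{n}_{fg}$ and $z\cdot\textrm{n}_f=\textrm{n}_{zf}$, valid for $f,g\in C(X_A)$ and $z\in\mathbb{T}$.

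For $i_A$, I would first establish the key lemma that, for $f,g\in C(X_A)$ with $f(x),g(x)\neq 0$, one has $(\textrm{n}_f,x)\approx(\textrm{n}_g,x)$ if and only if $f(x)$ and $g(x)$ have the same argument: the relation $\textrm{n}_h\textrm{n}_f=\textrm{n}_k\textrm{n}_g$ unwinds to $hf=kg$ in $C(X_A)$, which at $x$ (with $h(x),k(x)>0$) forces equal arguments, while conversely $h=\overline{f}g$ and $k=\overline{f}f$ witness the equivalence since $hf=|f|^2g=kg$ and $h(x),k(x)>0$. As $\overline{f}f(x)=|f(x)|^2>0$ whenever $f(x)\neq 0$, every pair $(\textrm{n}_f,x)$ with $f(x)=z\in\mathbb{T}$ lies in $\mathcal{E}_A$, and the lemma shows $[[\textrm{n}_f,x]]$ depends only on $(x,z)$; this is well-definedness of $i_A$, and injectivity is the lemma read backwards. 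That $i_A$ is a homomorphism is the computation $i_A((x,z))\,i_A((x,w))=[[\textrm{n}_f,x]]\,[[\textrm{n}_g,x]]=[[\textrm{n}_f\textrm{n}_g,x]]=[[\textrm{n}_{fg},x]]=i_A((x,zw))$, where the product is defined because $\alpha_{\textrm{n}_g}=\textrm{Id}_{\textrm{supp}(g)}$ (Proposition~\ref{norm_prop}(3)) fixes $x$.

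For $\pi_A$, surjectivity is immediate: every germ in $\mathcal{G}_A$ has the form $[\alpha_\textrm{n},x]$ with $\textrm{n}=(a,b)$ admissible and $x\in U_\textrm{n}=\{ba>0\}$, so $(\textrm{n},x)\in\mathcal{E}_A$ and $\pi_A([[\textrm{n},x]])=[\alpha_\textrm{n},x]$. For well-definedness, if $(\textrm{n},x)\approx(\textrm{m},x)$ is witnessed by $\textrm{n}_f\textrm{n}=\textrm{n}_g\textrm{m}$ with $f(x),g(x)>0$, then by Proposition~\ref{norm_prop}(2) (with $\alpha_{\textrm{n}_f}$ and $\alpha_{\textrm{n}_g}$ identities on their supports) both sides realise the restriction of $\alpha_\textrm{n}$, respectively $\alpha_\textrm{m}$, to one and the same open set $W$; since equal admissible pairs realise the same partial homeomorphism, $\alpha_\textrm{n}$ and $\alpha_\textrm{m}$ agree on $W$, and $x\in W$, so $[\alpha_\textrm{n},x]=[\alpha_\textrm{m},x]$. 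The homomorphism property then follows from Proposition~\ref{norm_prop}(1)--(2): $\pi_A([[\textrm{n}\textrm{m},x]])=[\alpha_{\textrm{n}\textrm{m}},x]=[\alpha_\textrm{n}\circ\alpha_\textrm{m},x]=[\alpha_\textrm{n},\alpha_\textrm{m}(x)]\,[\alpha_\textrm{m},x]$, with the analogous statements for units and inverses. Continuity of $\pi_A$ I would get from $\alpha_{z\cdot\textrm{n}}=\alpha_\textrm{n}$ for $z\in\mathbb{T}$ (immediate from Definition~\ref{normal}, as $|z|=1$), which gives $\pi_A(\mathcal{U}(\textrm{n},U,\mathbb{T}))=\mathcal{U}(U,\alpha_\textrm{n})$; hence $\pi_A^{-1}(\mathcal{U}(U,\varphi))$ is a union of basic open sets $\mathcal{U}(\textrm{m},W',\mathbb{T})$ over admissible pairs $\textrm{m}$ whose germ at points of $U$ agrees with $\varphi$, and so is open.

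It remains to prove $i_A$ continuous, and this is where the real care is needed. Given $(x_0,z_0)$ I would pick, by Urysohn, $h\in C(X_A)$ with $0\le h\le 1$ and $h\equiv 1$ on an open neighbourhood $U$ of $x_0$, so that $i_A((x,z))=[[\textrm{n}_{zh},x]]=[[z\cdot\textrm{n}_h,x]]$ on $U\times\mathbb{T}$; it then suffices to show the ``chart'' $(x,z)\mapsto[[z\cdot\textrm{n}_h,x]]$ into $\mathcal{U}(\textrm{n}_h,U,\mathbb{T})$ is continuous. Tested against a basic open set $\mathcal{U}(\textrm{n}_k,U',V')$ coming from a diagonal pair, the phase lemma gives preimage $\{(y,w)\in U'\times\mathbb{T}\colon w\,\overline{k(y)}/|k(y)|\in V'\}$, which is open; tested against $\mathcal{U}(\textrm{m},U',V')$ with $\textrm{m}=(c,d)$ general, a nonempty preimage forces, via $\pi_A$, that $\alpha_\textrm{m}$ is the identity near the base point in question, whereupon solving the first-coordinate equation in $\textrm{n}_p(z\cdot\textrm{n}_h)=\textrm{n}_q(w\cdot\textrm{m})$ (with $p,q$ strictly positive at that point) shows that $c$, and symmetrically $d$, becomes an element of $\textrm{core}(A)=C(X_A)$ after multiplication by a bump function supported near the point, reducing to the diagonal case. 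I expect this localization step---that an admissible pair realising a locally trivial partial homeomorphism is itself locally a diagonal pair---to be the main obstacle, alongside the verification that $x\in W$ in the well-definedness of $\pi_A$; the rest is routine bookkeeping with Proposition~\ref{norm_prop}.
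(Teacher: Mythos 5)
Your proposal is correct and follows essentially the same route as the paper: well-definedness and injectivity of $i_A$ come from unwinding $\textrm{n}_h\textrm{n}_f=\textrm{n}_k\textrm{n}_g$ for diagonal pairs, continuity of $\pi_A$ from the fact that basic open sets of $\mathcal{E}_A$ are saturated in the $\mathbb{T}$-direction, and continuity of $i_A$ from the observation that membership of $[[\textrm{n}_h,x]]$ in a basic open set $\mathcal{U}(\textrm{n},U,V)$ forces $\textrm{n}=(a,b)$ to be locally a diagonal pair --- the paper deduces $a=\overline{b}$ on an open set $E$ on which $\alpha_\textrm{n}$ is the identity and exhibits the preimage as the open set $\{(x,za(x)/|a(x)|)\colon x\in E,\ z\in V\}$, which is exactly your localization step. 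The two points you flag as delicate are precisely the ones the paper must handle, and both resolve as you predict (for the first, note that the witnessing functions being positive at $x$ places $x$ in the common domain once $\textrm{n}_f$ is multiplied on the appropriate side).
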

 	
 	\begin{proof}
 		Let $x \in X_A$, $z \in \mathbb{T}$, and let $f,g \in C(X_A)$ with $f(x) = g(x) = z$. We then have that $(\textrm{n}_f,x) \approx (\textrm{n}_g,x)$ since $\overline{z}f(x),\overline{z}g(x)>0$ and $f\overline{z}g = g\overline{z}f$, and so $\textrm{n}_f \textrm{n}_{\overline{z}g} = \textrm{n}_g \textrm{n}_{\overline{z}f}$. Therefore, $i_A$ is  well defined.
 		
 		Let $x,y \in X_A$, let $z,w \in \mathbb{T}$, and let $f,g \in C(X_A)$ with $f(x) = z$ and $g(y) = w$. If $(\textrm{n}_f,x) \approx (\textrm{n}_g,y)$, then $x = y$, and there exist $f^\prime,g^\prime \in C(X_A)$ with $f'(x),g'(x)>0$ such that $\textrm{n}_f \textrm{n}_{f^\prime} = \textrm{n}_g \textrm{n}_{g^\prime}$. Therefore,  $ff^\prime = gg^\prime$ which implies that $zf^\prime(x) = wg^\prime(x)$. But since $f^\prime(x), g^\prime(x) > 0$, this implies that $z=w$, and so $i_A$ is injective.

 		Now, choose an open set $W$ of $\mathcal{E}_A$ of the form $\{[[ z\cdot \textrm{n},x ]] \colon x \in U,z\in V\}$ for some $\textrm{n}=(a,b)\in N(A)$ and some open neighborhood $U$ contained in $\textrm{supp}(f)$ and open subset $V\subseteq \mathbb{T}$. Then 
 		$$i^{-1}_A(W)=\{(x,z)\in X_A\times \mathbb{T}:[[\textrm{n}_h,x]]\in W\text{ for some }h\in C(X_A)\text{ with }h(x)=z \}\,.$$
 		Observe that if $[[\textrm{n}_h,x]]\in W$ for some $h\in C(X_A)$ with $h(x)\in \mathbb{T}$, this means that there exist $f,g\in C(X_A)$ with $f(x),g(x)>0$ and $z\in V$ with $\textrm{n}_f(z\cdot \textrm{n})=\textrm{n}_g\textrm{n}_h$. Therefore, we have that  $za(x)f(x)=b(x)h(x)$ and $\overline{z}b(x)\overline{f(x)}=\overline{b(x)h(x)}$ for every $x\in (\text{supp }g)\cap (\text{supp }h)$ so $a(x)=\overline{b(x)}$ for every $x\in (\text{supp }g)\cap (\text{supp }h)$. Moreover,  since $(\text{supp }g)\cap (\text{supp }h)$ is an open subset of $X_A$ that contains $x$ and with $\alpha_\textrm{n}(y)=y$ for every $y\in (\text{supp }g)\cap (\text{supp }h)$, we have that $x\in (X_A\setminus  \overline{\{y\in X_A: \alpha_\textrm{n}(y)\neq y \}})\cap U=:E$. Observe that $E$ is an open subset of $X_A$. Then for every $x\in E$, $z\in V$ and $h\in C(X_A)$ with $\text{supp }h\subseteq E$, then $(z\cdot \textrm{n},x)\approx (\textrm{n}_h,x)$ if and only if $h(x)=za(x)/|a(x)|$. Therefore, $i^{-1}_A(W)=\{(x,za(x)/|a(x)|): x\in E\,, z\in V\}$  is an open subset of $X_A\times \mathbb{T}$, as desired.

 		Now let $(\textrm{n},x) \approx (\textrm{m},x)$, we want to show that $(\alpha_\textrm{n},x) \sim (\alpha_\textrm{m},x)$. Since $(\textrm{n},x) \approx (\textrm{m},x)$ we have that there exist $f,f^{\prime} \in C(X_A)$ with $f(x),f^\prime(x) > 0$ such that $\textrm{n}\textrm{n}_f = \textrm{m}\textrm{n}_{f^\prime}$. By multiplying with some positive $h \in C(X_A)$ with $h(x) = 1$, we can assume that $\text{supp}(f) = \text{supp}(f^\prime)$. Setting $U = \text{supp}(f)$, we have that
 		${\alpha_{\textrm{n}}}_{\restriction_{U}} = \alpha_{\textrm{n}\textrm{n}_f} = \alpha_{\textrm{m}\textrm{n}_{f^{\prime}}} = {\alpha_\textrm{m}}_{\restriction_{U}} $. Thus, $\pi_A$ is surjective and well defined. 
 		
 		Finally, it is clear that for $\textrm{n}\in N(A)$ and $U\subseteq U_\textrm{n}$ the preimage of $[\alpha_\textrm{n},U]$ by $\pi_A$ is $\{[[z\cdot \textrm{n}, U]]: z\in \mathbb{T}\}$, so $\pi_A$ is continuous.  
 	\end{proof}
 	
 	\begin{proposition}
 		Let $A$ be a unital $L^p$-operator algebra for some $p\in[1,\infty)\setminus\{2\}$, then the sequence $X_A \times \mathbb{T} \xrightarrow{i_A} \mathcal{E}_A \xrightarrow{\pi_A} \mathcal{G}_A$ is exact.
 	\end{proposition}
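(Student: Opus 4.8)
The plan is to unpack what "exact" means here and reduce to a single set equality. Since the preceding proposition already gives that $i_A$ is an injective continuous groupoid homomorphism and $\pi_A$ a surjective continuous one, exactness of $X_A\times\mathbb{T}\xrightarrow{i_A}\mathcal{E}_A\xrightarrow{\pi_A}\mathcal{G}_A$ comes down to proving $i_A(X_A\times\mathbb{T})=\pi_A^{-1}(\mathcal{G}_A^{(0)})$, where $\mathcal{G}_A^{(0)}$ is identified with $X_A$. I would first record the bookkeeping fact that, by the very definition of the groupoid of germs, $\pi_A([[\textrm{n},x]])=[\alpha_\textrm{n},x]$ lies in $\mathcal{G}_A^{(0)}$ exactly when $\alpha_\textrm{n}$ agrees with the identity on some open neighbourhood of $x$; so what must be characterized are the classes $[[\textrm{n},x]]$ at which the realized partial homeomorphism is locally trivial.

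The inclusion $i_A(X_A\times\mathbb{T})\subseteq\pi_A^{-1}(\mathcal{G}_A^{(0)})$ is immediate: for $f\in C(X_A)$ with $f(x)=z\in\mathbb{T}$, Proposition \ref{norm_prop}(3) says that $\textrm{n}_f$ realizes the identity on $\textrm{supp}(f)$, so $\pi_A(i_A(x,z))=[\textrm{id}_{\textrm{supp}(f)},x]\in\mathcal{G}_A^{(0)}$.

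For the reverse inclusion, start with $[[\textrm{n},x]]$, $\textrm{n}=(a,b)$, $ba(x)>0$, such that $\alpha_\textrm{n}$ equals the identity on an open $U\ni x$ with $U\subseteq U_\textrm{n}$ (hence $U\subseteq V_\textrm{n}$). I would first normalize: replacing $\textrm{n}$ by $\textrm{n}\,\textrm{n}_h$ for a positive $h\in C(X_A)$ with $h(x)=1$ and $\textrm{supp}(h)\subseteq U$ — which leaves the class $[[\textrm{n},x]]$ unchanged, since using the identity $\textrm{n}_f\textrm{m}=\textrm{m}\,\textrm{n}_{f\circ\alpha_\textrm{m}}$ one checks $(\textrm{n},x)\approx(\textrm{n}\,\textrm{n}_h,x)$ — one may assume $U_\textrm{n}=V_\textrm{n}\subseteq U$ and $\alpha_\textrm{n}=\textrm{id}_{U_\textrm{n}}$. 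Then, combining condition (1) of Definition \ref{normal} (which gives $0\le bfa\le\|f\|_\infty\,ba$, so $\textrm{supp}(bfa)\subseteq U_\textrm{n}$) with condition (3), one obtains the identities $bfa=f\cdot ba$ and $afb=f\cdot ab$ in $\textrm{core}(A)\cong C(X_A)$ for every $f\in C(X_A)$. The crux is the claim that for $f\in C(X_A)_+$ with $f(x)>0$ and $\textrm{supp}(f)\subseteq U_\textrm{n}$ one has $fa\in\textrm{core}(A)$ and $bf=\overline{fa}$, so that $\textrm{n}_f\textrm{n}=(fa,bf)=\textrm{n}_{fa}$ is a core pair. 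Granting this: from the two identities $|fa|^2=f^2\,ab$, hence $fa(x)\ne 0$; set $z:=fa(x)/|fa(x)|\in\mathbb{T}$, and a routine patching argument on the compact Hausdorff space $X_A$, localizing near $x$, lets one write $fa=g\,c$ with $g\in C(X_A)_+$, $g(x)>0$, $c\in C(X_A)$, $c(x)=z$; then $\textrm{n}_f\textrm{n}=\textrm{n}_{gc}=\textrm{n}_g\,\textrm{n}_c$ shows $(\textrm{n},x)\approx(\textrm{n}_c,x)$, i.e. $[[\textrm{n},x]]=i_A(x,z)$, which finishes the set equality and hence the proposition.

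The main obstacle is exactly this crux claim. In the $C^\ast$-setting it is Renault's observation that a normalizer of a Cartan subalgebra whose associated partial homeomorphism is locally trivial lies in that subalgebra, proved using that a Cartan subalgebra is a maximal abelian subalgebra; but for $L^p$-operator algebras with $p\neq 2$, $\textrm{core}(A)$ is only the \emph{largest} unital $C^\ast$-subalgebra (Theorem \ref{core}), not a maximal abelian one, so I would argue instead as follows. First, show that $fa$ commutes with $\textrm{core}(A)$ on a neighbourhood of $x$: the displayed identities give $b\,(gfa-fga)=0=(gfa-fga)\,b$ for all $g\in C(X_A)$, and multiplying by a core element that inverts $ba$ (resp. $ab$) on a neighbourhood of $x$ — possible since $ba(x),ab(x)>0$ — annihilates this commutator locally. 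Then show that the closed subalgebra of $A$ generated by $\textrm{core}(A)$ together with $fa$ and $bf$ carries a $C^\ast$-structure whose involution conjugates $\textrm{core}(A)$ and interchanges $fa$ and $bf$, the $C^\ast$-identity being extracted from $\|fa\|^2=\|(fa)(bf)\|=\|f^2ab\|$ together with Lamperti's description (Theorem \ref{lamperti}) of the isometries of the ambient $L^p$-space; by maximality of the $C^\ast$-core this subalgebra is contained in $\textrm{core}(A)$, yielding $fa,bf\in\textrm{core}(A)$ with $bf=\overline{fa}$, as required.
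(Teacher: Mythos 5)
Your proposal follows the same skeleton as the paper's proof: the inclusion $\mathrm{Im}(i_A)\subseteq\ker(\pi_A)$ via Proposition \ref{norm_prop}(3), and, for the converse, the reduction of $[[\textrm{n},x]]\in\ker(\pi_A)$ to an equivalence $(\textrm{n},x)\approx(\textrm{n}_h,x)$. The paper disposes of the second inclusion in one line (``it is easy to check that $(\textrm{n},x)\approx(\textrm{n}_h,x)$ with $h(x)=a(x)/|a(x)|$''), and you have correctly identified that, after unwinding the definition of $\approx$, the entire content of that line is exactly your crux claim: for a positive $f$ supported where $\alpha_{\textrm{n}}$ is the identity, $fa$ lies in $\mathrm{core}(A)$ and $bf=\overline{fa}$. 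Your preparatory steps (the normalization of $\textrm{n}$, the inclusion $\mathrm{supp}(bfa)\subseteq U_{\textrm{n}}$ from $0\le bfa\le\norm{f}_\infty\, ba$, and the factorization $fa=gc$ once $fa\in\mathrm{core}(A)$ is known) are all sound, and indeed more careful than the paper, which evaluates the abstract element $a$ at a point of $X_A$ without justification.

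The gap is in your treatment of the crux claim itself. Part (a), the local commutation of $fa$ with $\mathrm{core}(A)$, is workable, but it cannot by itself place $fa$ in the core: for $p\neq 2$ the core is only the \emph{largest} unital $C^\ast$-subalgebra, not a maximal abelian one, as you yourself note. Part (b) is where the substance must lie, and as written it is circular: the identity $\norm{fa}^2=\norm{(fa)(bf)}$ is precisely the $C^\ast$-identity you are trying to establish for the putative involution interchanging $fa$ and $bf$, and no argument for it is offered; nor is the involution on the closed subalgebra generated by $\mathrm{core}(A)$, $fa$ and $bf$ actually constructed, shown to be isometric, or shown to be an algebra involution, all of which are needed before the maximality statement of Theorem \ref{core} can be invoked. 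The appeal to Lamperti is the right instinct --- one wants to represent $A$ isometrically on an $L^p$-space, use that the core acts by multiplication operators, and deduce that an element satisfying conditions (1)--(3) of Definition \ref{normal} with $\alpha=\mathrm{id}$ must itself act as a multiplication operator --- but that deduction is exactly the missing argument. To be fair, the paper supplies nothing at this point either; you have isolated the one genuinely nontrivial step of the proof, but you have not closed it.
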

 	
 	\begin{proof}
 		We need to show that $\text{Im}(i_A) = \text{Ker}(\pi_A)$. First, since for any $f,g \in C(X_A)$ the partial homeomorphism $\alpha_{\textrm{n}_f}$ realized by $\textrm{n}_f$ is the identity map on the support of $f$, we have that $\text{Im}(i_A) \subseteq \text{Ker}(\pi_A)$. On the other hand, let $[[\textrm{n},x]]\in \text{Ker}(\pi_A)$, then $\alpha_\textrm{n}\restriction_{U}=\text{Id}\restriction_{U}$ for some neighborhood of $x$. Therefore, it is easy to check that $(\textrm{n},x)\approx (\textrm{n}_h,x)$ where $h\in C(X_A)$ with $h(x)=a(x)/|a(x)|=b(x)/|b(x)|$, so $(\textrm{n}_h,x)\in i_A(X_A\times \mathbb{T})$.
 	\end{proof}

 	\begin{theorem}
 		\label{main_result}
 		Let $p\in[1,\infty)\setminus\{2\}$, let $\mathcal{E}$ be a twist over a topological principal, locally compact, Hausdorff, étale groupoid $\mathcal{G}$ with compact unit space. Then, there is an isomorphism $\varphi   \colon\mathcal{E}_{F^p_{\lambda}(\mathcal{G};\mathcal{E})} \rightarrow \mathcal{E}$ such that the following diagram
 		
 		\label{main}
 		\begin{equation}
 			\xymatrix{
 				\mathcal{G}^{(0)} \times \mathbb{T} \ar@{>}[rr]^{i_{F^p_{\lambda}(\mathcal{G};\mathcal{E})}} \ar@{>}[d]^{\textrm{=}} & &
 				\mathcal{E}_{F^p_{\lambda}(\mathcal{G};\mathcal{E})} \ar@{>}[d]^{\varphi} \ar@{>}[rr]^{\pi_{F^p_{\lambda}(\mathcal{G};\mathcal{E})}}& & \mathcal{G}_{F^p_{\lambda}(\mathcal{G};\mathcal{E})} \ar@{>}[d]^{\theta} \\
 				\mathcal{G}^{(0)} \times \mathbb{T} \ar@{>}[rr]^{i} &  &\mathcal{E} \ar@{>}[rr]^{\pi}  & &\mathcal{G} }
 		\end{equation}
 		commutes, where $\theta$ is the groupoid isomorphism given in Remark \ref{exp}.
 	\end{theorem}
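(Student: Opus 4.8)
The plan is to construct $\varphi$ directly on representatives of the Weyl twist, using the open bisections $B_{\textrm{n}}$ provided by Proposition~\ref{bisection} to pick out the correct fibre of $\mathcal{E}$ and the $\mathbb{T}$-equivariant function $a$ attached to an admissible pair $\textrm{n}=(a,b)$ to pin down a single point inside that fibre. Write $A=F^p_{\lambda}(\mathcal{G};\mathcal{E})$. By Theorem~\ref{rigid_core} we identify $X_A$ with $\mathcal{G}^{(0)}$, which gives meaning to the left-hand identity in the diagram, and $\theta\colon\mathcal{G}_A\to\mathcal{G}$ is the isomorphism of Theorem~\ref{groupoid_rec} described in Remark~\ref{exp}. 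Given $[[\textrm{n},x]]\in\mathcal{E}_A$ with $\textrm{n}=(a,b)$ and $ba(x)>0$, Proposition~\ref{bisection} gives an open bisection $B_{\textrm{n}}$ with $\alpha_{\textrm{n}}=\beta_{B_{\textrm{n}}}$; let $\gamma$ be the unique element of $B_{\textrm{n}}$ with $s(\gamma)=x$, so $\gamma=B_{\textrm{n}}x$ and $\theta([\alpha_{\textrm{n}},x])=\gamma$. Since $a$ is continuous, $\mathbb{T}$-equivariant and nowhere zero on $\pi^{-1}(B_{\textrm{n}})$, the map $\varepsilon\mapsto a(\varepsilon)$ carries $\pi^{-1}(\gamma)$ bijectively onto a circle centred at $0$, so there is exactly one $\varepsilon\in\pi^{-1}(\gamma)$ with $a(\varepsilon)$ real and positive; set $\varphi([[\textrm{n},x]])$ equal to this $\varepsilon$. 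Independence of the representative in $\pi^{-1}(\gamma)$ is immediate from $\mathbb{T}$-equivariance, and $\varphi$ is well defined on $\approx$-classes because, if $(\textrm{n},x)\approx(\textrm{m},x)$ with $\textrm{m}=(c,d)$, unwinding the defining relation $\textrm{n}_f\textrm{n}=\textrm{n}_g\textrm{m}$ forces $a$ and $c$ to agree on $\pi^{-1}(\gamma)$ up to a strictly positive scalar, hence to select the same $\varepsilon$.

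Next I would check that $\varphi$ is a bijective groupoid homomorphism compatible with the maps in the diagram. Compatibility with the projections holds by construction: $\pi(\varphi([[\textrm{n},x]]))=\gamma=\theta(\pi_A([[\textrm{n},x]]))$. Multiplicativity reduces, using that $B_{\textrm{n}\textrm{m}}$ and $B_{\textrm{n}}B_{\textrm{m}}$ agree near $x$ (they induce the same partial homeomorphism there) and the convolution formula~(\ref{section_conv}), to the short computation $(a\ast c)(\eta\delta)=\int_{\mathbb{T}}z\,a(\eta)\,\bar z\,c(\delta)\,dz=a(\eta)c(\delta)$ for composable $\eta,\delta$ lying over $B_{\textrm{n}}$ and $B_{\textrm{m}}$ respectively, which yields $\varphi([[\textrm{n}\textrm{m},x]])=\varphi([[\textrm{n},\alpha_{\textrm{m}}(x)]])\,\varphi([[\textrm{m},x]])$; units and inverses are handled the same way. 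For surjectivity, given $\varepsilon\in\mathcal{E}$ lying over $\gamma$ with $x=s(\gamma)$, pick an open bisection $B\ni\gamma$ and a cozero neighbourhood of $x$, apply Proposition~\ref{admissible} to obtain an admissible pair $\textrm{n}=(a,b)$ with $\alpha_{\textrm{n}}=\beta_{B}$ near $x$, and replace $\textrm{n}$ by $z\cdot\textrm{n}$ for a suitable $z\in\mathbb{T}$ (this pair is again admissible, with the same induced bisection) so that $a(\varepsilon)>0$; then $\varphi([[\textrm{n},x]])=\varepsilon$. For injectivity, if $\varphi([[\textrm{n},x]])=\varphi([[\textrm{m},y]])$ then $x=y$ and $B_{\textrm{n}}$, $B_{\textrm{m}}$ share the element $\gamma$ over $x$, hence agree on a neighbourhood of $x$; comparing $a$ and $c$ near $\pi^{-1}(\gamma)$ again produces the functions witnessing $(\textrm{n},x)\approx(\textrm{m},x)$. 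Finally, for $f\in C(\mathcal{G}^{(0)})$ with $f(x)=z$ one has $B_{\textrm{n}_f}=\textrm{supp}(f)\subseteq\mathcal{G}^{(0)}$, so $B_{\textrm{n}_f}x=x$ and $\varphi(i_A(x,z))=\varphi([[\textrm{n}_f,x]])$ is the point of $\pi^{-1}(x)=i(\{x\}\times\mathbb{T})$ singled out by the normalization, which is $i(x,z)$; thus $\varphi\circ i_A=i$.

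It remains to see that $\varphi$ is a homeomorphism. For openness, a basic open set $\mathcal{U}(\textrm{n},U,V)$ of $\mathcal{E}_A$ is sent by $\varphi$ to $V\cdot\{\varphi([[\textrm{n},x]]):x\in U\}$, and continuity and $\mathbb{T}$-equivariance of $a$ together with openness of $B_{\textrm{n}}$ show this is open in $\pi^{-1}(B_{\textrm{n}})$; for continuity one uses the local trivializations $\pi^{-1}(B)\cong B\times\mathbb{T}$ of Definition~\ref{twist_def}(2) together with Proposition~\ref{admissible} to write the $\varphi$-preimage of a basic open set of $\mathcal{E}$ as a union of sets of the form $\mathcal{U}(\textrm{n},U,V)$. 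Combining all of this gives the commuting diagram, hence the theorem. I expect the homeomorphism step to be the main obstacle: the topology of the Weyl twist is described through the sets $\mathcal{U}(\textrm{n},U,V)$ indexed by admissible pairs, whereas that of $\mathcal{E}$ is described through local bundle trivializations, and matching them takes work — in particular because the normalization ``$a$ positive'' cuts out only a locally closed subset of each fibre and becomes open only after sweeping by the $\mathbb{T}$-action. A secondary delicate point is the recurring need to track precisely where the auxiliary continuous functions are positive when manipulating the relation $\approx$.
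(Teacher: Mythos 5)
Your overall architecture is the same as the paper's: $\varphi$ is defined on a representative $(\textrm{n},x)$ with $\textrm{n}=(a,b)$ by locating the fibre $\pi^{-1}(B_{\textrm{n}}x)$ via Proposition~\ref{bisection} and then using the phase of the $\mathbb{T}$-equivariant function $a$ to single out one point of that fibre; well-definedness under $\approx$, multiplicativity, and commutativity of the right square are then checked much as you describe. Your explicit verification of injectivity, surjectivity and the homeomorphism property goes beyond what the paper writes down (the paper only defines $\varphi$, checks it is a groupoid homomorphism making the diagram commute, and relies on the outer vertical arrows being isomorphisms and the rows being central extensions), and your fibrewise computation $(a\ast c)(\eta\delta)=\int_{\mathbb{T}}z\,a(\eta)\,\overline{z}\,c(\delta)\,dz=a(\eta)c(\delta)$ is a clean, section-free route to multiplicativity.

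The step that fails as written is the left square. With the paper's conventions, a core element $f\in C(\mathcal{G}^{(0)})$ corresponds via (\ref{core-id}) to the function $\widetilde{f}$ on $\mathcal{E}$ with $\widetilde{f}(i(x,w))=wf(x)$, and $i_A(x,z)=[[\textrm{n}_f,x]]$ for $f(x)=z$. Your normalization selects the unique $\varepsilon\in\pi^{-1}(x)=i(\{x\}\times\mathbb{T})$ with $\widetilde{f}(\varepsilon)>0$; since $\widetilde{f}(i(x,w))=wz$, that point is $i(x,\overline{z})$, not $i(x,z)$ as you assert. Hence your $\varphi$ satisfies $\varphi(i_A(x,z))=i(x,\overline{z})$, and the left square commutes only after conjugating the central circle, i.e.\ you have identified the Weyl twist with the conjugate of $\mathcal{E}$ rather than with $\mathcal{E}$ itself. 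The paper avoids this by setting $\varphi([[\textrm{n},x]])=\bigl(a(S(B_{\textrm{n}}x))/|a(S(B_{\textrm{n}}x))|\bigr)\cdot S(B_{\textrm{n}}x)$ for the fixed section $S$, which on the image of $i_A$ returns $\bigl(f(x)/|f(x)|\bigr)\cdot i(x,1)=i(x,z)$. This is a repairable normalization slip rather than a wrong approach, but be aware that the two phase conventions are not interchangeable elsewhere either: once you switch to the paper's convention your one-line multiplicativity argument no longer applies verbatim, because the formula $\varepsilon\mapsto\bigl(a(\varepsilon)/|a(\varepsilon)|\bigr)\cdot\varepsilon$ transforms by $w^{2}$ under $\varepsilon\mapsto w\cdot\varepsilon$ and therefore depends on the chosen section, so the phases coming from $S(\gamma)S(\gamma')$ versus $S(\gamma\gamma')$ have to be tracked explicitly in the product computation.
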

 	
 	\begin{proof}
 		By Theorem \ref{groupoid_rec} we have that the right vertical arrow is an isomorphism, it therefore suffices to define $\varphi$ and show that it is a groupoid homomorphism which commutes in the diagram.
 		Before defining $\varphi$ we need to make the following observations.
 		
 		Let $x\in \mathcal{G}^{(0)}$ and $\textrm{n}=(a,b)$ be an admissible pair with   $ba(x) > 0$, and let $B_\textrm{n}$ be the open bisection in $\mathcal{G}$ induced by $\textrm{n}$ given in Proposition \ref{bisection}. Then, $ba(x) = b(S(B_\textrm{n} x)^{-1})a(S(B_\textrm{n} x)) > 0$. This means that $\textrm{arg}(a(S(B_\textrm{n} x))) = -\textrm{arg}(b(S(B_\textrm{n} x)^{-1}))$ and so 
 		$$a(S(B_\textrm{n} x))/|a(S(B_\textrm{n} x))| = \overline{b(S(B_\textrm{n} x)^{-1})/|b(S( B_\textrm{n} x)^{-1})|}\,.$$

 		Now, let $(\textrm{n},x) \approx (\textrm{m},x)$, where $\textrm{n}=(a,b)$ and $\textrm{m}=(c,d)$. Then, there exist $f,g \in C(\mathcal{G}^{(0)})$ with $f(x),g(x)>0$ such that $\textrm{n}\textrm{n}_f = \textrm{m}\textrm{n}_g$, i.e., $(af,\overline{f}b) = (cg,\overline{g}d)$ which written out gives us the equalities $a(S(\gamma))f(s(\gamma)) = c(S(\gamma))g(s(\gamma))$ and  $b(S(\gamma)^{-1})\overline{f(r(\gamma))} = d(S(\gamma)^{-1})\overline{g(r(\gamma))}$ for all $\gamma \in \mathcal{G}$. Note that $B_{\textrm{n}\textrm{n}_f} = B_{\textrm{m}\textrm{n}_g}$. It follows from the above equalities that there exists a neighborhood of $B_\textrm{n}x$ where $B_\textrm{m}$ and $B_\textrm{n}$ agree. In particular $B_\textrm{m} x = B_\textrm{n} x$, and therefore we have that $a(S(B_\textrm{n} x))f(x) = c(S(B_\textrm{n} x))g(x)$ (similarly $b(S(B_\textrm{n} x)^{-1})f(x) = d(S(B_\textrm{n} x)^{-1})g(x)$). Rewriting this, we have $$a(S(B_\textrm{n} x)) = c( S(B_\textrm{n} x))\cdot g(x)/f(x)\,,$$ where $g(x)/f(x)$ is a positive constant. This implies that 
 		$$a(S(B_\textrm{n} x))/|a(S(B_\textrm{n} x))| = c(S(B_\textrm{n} x))/|c(S(B_\textrm{n} x))|\,.$$ 
 		Similarly it follows that  $b(S(B_\textrm{n} x)^{-1})/|b(S(B_\textrm{n} x)^{-1})| = d(S(B_\textrm{n} x)^{-1})/|d(S(B_\textrm{n} x)^{-1})|$.

 		Now, let us define $\varphi \colon \mathcal{E}_{F^{p}_\lambda(\mathcal{G};\mathcal{E})} \rightarrow \mathcal{E}$ by 
 		$$
 		[[ \textrm{n},x ]] \mapsto \frac{a(S(B_\textrm{n} x))}{|a( S(B_\textrm{n} x))|}\cdot  S(B_\textrm{n} x)\,,
 		$$
 		where $\textrm{n}=(a,b)$ and $x\in \mathcal{G}^{(0)}$ with $ba(x)> 0$.  From the previous observations we have that $\varphi$ is well defined.
 		
 		We then need to show that $\varphi$ is a homomorphism. Let $\textrm{n} = (a,b)$ and $\textrm{m} = (b,c)$ be two admissible pairs. Pick $x \in \mathcal{G}^{(0)}$. Then $[ \textrm{n},\alpha_{\mathrm{m}}(x) ] [ \textrm{m},x ] = [ \textrm{nm},x ]$, where $\textrm{nm} = (ac,bd)$. We first need to show that $B_\textrm{n}  B_\textrm{m} x = B_{\textrm{nm}} x$. Set $\gamma = B_\textrm{n}  B_\textrm{m}x=B_\textrm{n}\alpha_\textrm{m}(x)  B_\textrm{m}x$. We have that  $$ac(S(\gamma)) = \sum_{\eta \in \mathcal{G}_x}a(S(\gamma)S(\eta)^{-1})c(S(\eta)) = a(S(B_\textrm{n}\alpha_\textrm{m}(x) ))c(S(B_\textrm{m} x)) \neq 0\,.$$ Similarly, $bd(S(\gamma)^{-1}) \neq 0$. It follows that $B_\textrm{n} B_\textrm{m} x = B_{\textrm{nm}} x$, and using this we have that
 		$$
 		\frac{ac(S(B_{\textrm{nm}}x))}{|ac(S(B_{\textrm{nm}}x))|} = \frac{a(S(B_\textrm{n} \alpha_\textrm{m}(x)))}{|a(S(B_\textrm{n}\alpha_\textrm{m}(x)))|}\frac{c(S(B_\textrm{m} x))}{|c(S(B_\textrm{m} x))|}\,.
 		$$
 		Thus, $\varphi([[\textrm{n},\alpha_\textrm{m}(x)]]\cdot [[\textrm{m},x]])=\varphi([[\textrm{nm},x]])$. Moreover, 
 		
 		\begin{align*}
 			\varphi([[\textrm{n},x]]^{-1}) & =\varphi([[\textrm{n}^\sharp,\alpha_\textrm{n}(x)]])=\frac{b(S(B_\textrm{n} x)^{-1})}{|b( S(B_\textrm{n} x)^{-1})|}\cdot  S(B_\textrm{n} x)^{-1} =\\ 
 			& =\overline{\frac{a(S(B_\textrm{n} x))}{|a( S(B_\textrm{n} x))|}}\cdot  S(B_\textrm{n} x)^{-1}=\left( \frac{a(S(B_\textrm{n} x))}{|a( S(B_\textrm{n} x))|}\cdot  S(B_\textrm{n} x)\right) ^{-1}=\varphi([[\textrm{n},x]])^{-1}\,,
 		\end{align*}
 		thus, $\varphi$ is a groupoid homomorphism. 
 		
 		Finally, we need to show that the diagram commutes. Let $x \in \mathcal{G}^{(0)}$, $z \in \mathbb{T}$ and let $f \in C(\mathcal{G}^{(0)})$ be any function such that $f(x) = z$. Note that $[[ \textrm{n}_f,x ]] = i_{F^{p}_\lambda(\mathcal{G};\mathcal{E})}(x,z)$. Then, we have that $\varphi([[ \textrm{n}_f,x ]]) = f(x) \cdot x = i(x,z)$, as expected. Now let $[[\textrm{n},x ]] \in \mathcal{E}_{F^{p}_\lambda(\mathcal{G};\mathcal{E})}$, then we have that 
 		
 		$$
 		\pi(\varphi([[ \textrm{n},x ]])) = 
 		\pi \left(\frac{a(S(B_\textrm{n} x))}{|a(S(B_\textrm{n} x))|} \cdot S(B_\textrm{n} x)\right) = B_\textrm{n} x = \theta([[\textrm{n},x]]),
 		$$
 		where $\theta$ is given as in Remark \ref{exp}. Hence the diagram commutes.
 	\end{proof}
 	
 		
 		
 	
 	Similarly to the group of cohomologous cocycles for a locally compact group, one can construct a group of isomorphic twists over an étale groupoid. We will follow  \cite[Section 5.2]{2017arXiv171010897S} in doing so.
 	
 	Let $\mathcal{G}$ be an étale groupoid and let $\mathcal{E}$ and $\mathcal{F}$ be two twists over $\mathcal{G}$. Two twists are said to be \emph{properly isomorphic} if there exists a groupoid isomorphism $\phi$ such that the following diagram
 	
 	\begin{equation}
 		\xymatrix{
 			\mathcal{G}^{(0)} \times \mathbb{T} \ar@{>}[r] \ar@{>}[d]^{=} & \mathcal{E} \ar@{>}[d]^{\phi} \ar@{>}[r] & 
 			\mathcal{G} \ar@{>}[d]^{=}
 			\\
 			\mathcal{G}^{(0)} \times \mathbb{T} \ar@{>}[r]^{i} &
 			\mathcal{F} \ar@{>}[r]  &  \mathcal{G}}
 	\end{equation}
 	commutes.
 	We write $[\mathcal{E}]$ for the equivalence class of the twist $\mathcal{E}$ over $\mathcal{G}$ containing all twists over $\mathcal{G}$ that are properly isomorphic to $\mathcal{E}$. The collection of equivalence classes of proper isomorphic twists on $\mathcal{G}$ is denoted by $\textrm{Tw}(\mathcal{G})$.
 	
 	Let $\mathcal{E}$ and $\mathcal{E}^\prime$ be two twists over $\mathcal{G}$. On the set 
 	$$
 	\mathcal{E} \times_\pi^{\pi^{\prime}} \mathcal{E}^\prime = \{(\varepsilon,\varepsilon^\prime) \in \mathcal{E} \times \mathcal{E}^\prime \colon \pi(\varepsilon) = \pi^\prime(\varepsilon^\prime)  \}, 
 	$$
 	define the following equivalence relation: $(\varepsilon,\varepsilon^\prime) \sim (\delta,\delta^\prime)$ if and only if there exists $z\in \mathbb{T}$ such that $z \cdot \varepsilon = \delta$ and $\overline{z} \cdot \varepsilon^\prime = \delta^\prime$. The quotient $\mathcal{E} \ast \mathcal{E}^\prime = \mathcal{E} \times_\pi^{\pi^{\prime}} \mathcal{E}^\prime / \sim$ is in fact a twist over $\mathcal{G}$ given by
 	$$
 	\mathcal{G}^{(0)} \times \mathbb{T} \xrightarrow{i \ast i^\prime} \mathcal{E} \ast \mathcal{E}^\prime \xrightarrow{\pi \ast \pi^\prime} \mathcal{G}
 	$$
 	where $(i \ast i)(x,z) = [i(x,z),i^\prime(1,x)]$ and $(\pi\ast\pi^\prime)([\varepsilon,\varepsilon^\prime]) = \pi(\varepsilon)$. The collection $\textrm{Tw}(\mathcal{G})$ forms an abelian group under the group operation given by $\mathcal{E} + \mathcal{E}^\prime = [\mathcal{E} \ast \mathcal{E}]$ and identity given by the class of the trivial twist. By \cite[Proposition 2.13]{choi2021rigidity}, an isometric isomorphism of two $L^p$-operator algebra induces an injective $C^\ast$-homomorphism of the $C^\ast$-cores of the algebras. We therefore have the following corollary from Theorem \ref{main_result}.
 	
 	\begin{corollary} Let $p \in [1, \infty) \setminus \{2\}$.
 		Let $\mathcal{G}$ and $\mathcal{H}$ be topological principal, locally compact, Hausdorff, étale groupoids with compact unit spaces, let $\mathcal{E}$ be a   twist over $\mathcal{G}$ and $\mathcal{F}$ be a twist over $\mathcal{H}$. Then there is an isometric isomorphism $F^p_{\lambda}(\mathcal{E};\mathcal{G}) \cong F^p_{\lambda}(\mathcal{E};\mathcal{H})$ if and only if there is an isomorphism of groupoids $\mathcal{G} \cong \mathcal{H}$, and $[\mathcal{E}] = [\mathcal{F}]$ in $\textrm{Tw}(\mathcal{G})$.
 	\end{corollary}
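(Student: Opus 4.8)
The plan is to assemble the structural results of Sections~6 and~7 rather than argue from scratch. For the ``only if'' direction, I would start from an isometric isomorphism $\Phi\colon F^p_\lambda(\mathcal{G};\mathcal{E})\to F^p_\lambda(\mathcal{H};\mathcal{F})$; since $\mathcal{G}^{(0)}$ and $\mathcal{H}^{(0)}$ are compact both algebras are unital and $\Phi$ is automatically unital. By Theorem~\ref{core} together with Proposition~\ref{homo_core}, $\Phi$ restricts to a $\ast$-isomorphism of the $C^\ast$-cores, which by Theorem~\ref{rigid_core} is a $\ast$-isomorphism $C(\mathcal{G}^{(0)})\to C(\mathcal{H}^{(0)})$, hence dual to a homeomorphism $\psi\colon\mathcal{H}^{(0)}\xrightarrow{\cong}\mathcal{G}^{(0)}$ (so $X_{F^p_\lambda(\mathcal{G};\mathcal{E})}=\mathcal{G}^{(0)}$ and similarly for $\mathcal{H}$). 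The key observation is that every clause of Definition~\ref{normal} --- the positivity of $bfa$ and $afb$, the description of $U_{\mathrm{n}},V_{\mathrm{n}}$ as supports inside the core, and the intertwining identities --- is expressed purely through the algebra product, the identification $\textrm{core}(A)\cong C(X_A)$, and the positive cone $C(X_A)_+$, all of which $\Phi$ preserves. Hence $\Phi$ carries admissible pairs to admissible pairs and realizes the $\psi$-conjugate partial homeomorphisms, so it induces an isomorphism $N(F^p_\lambda(\mathcal{G};\mathcal{E}))\cong N(F^p_\lambda(\mathcal{H};\mathcal{F}))$ of inverse semigroups compatible with $\psi$, and therefore an isomorphism of the associated Weyl groupoids and Weyl twists fitting into a commuting ladder of the exact sequences $X_A\times\mathbb{T}\to\mathcal{E}_A\to\mathcal{G}_A$ in which the leftmost vertical map is $\psi\times\mathrm{id}_{\mathbb{T}}$. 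Feeding this into Theorem~\ref{groupoid_rec} yields $\mathcal{G}\cong\mathcal{H}$, and feeding it into the commuting square of Theorem~\ref{main_result} on each side exhibits $\mathcal{E}$ and $\mathcal{F}$ as properly isomorphic twists once $\mathcal{H}$ is identified with $\mathcal{G}$; that is, $[\mathcal{E}]=[\mathcal{F}]$ in $\textrm{Tw}(\mathcal{G})$.

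For the ``if'' direction I would argue directly. After using $\mathcal{G}\cong\mathcal{H}$ to assume $\mathcal{H}=\mathcal{G}$, a proper isomorphism $\phi\colon\mathcal{E}\to\mathcal{F}$ (so $\pi_{\mathcal{F}}\circ\phi=\pi_{\mathcal{E}}$ and $\phi\circ i_{\mathcal{E}}=i_{\mathcal{F}}$) induces $f\mapsto f\circ\phi^{-1}$ on compactly supported sections. I would check that this is a $\ast$-algebra isomorphism $\Sigma_c(\mathcal{G};\mathcal{E})\to\Sigma_c(\mathcal{G};\mathcal{F})$: the only inputs are that $\phi$ is a topological groupoid isomorphism fixing the $\mathbb{T}$-coordinates, so it preserves $\mathbb{T}$-equivariance, transports the fibrewise measures $\nu_x$, and is multiplicative, which makes it respect the convolution and involution formulas of Section~6. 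The same $\phi$ also gives, fibrewise over each $x\in\mathcal{G}^{(0)}$, an isometric isomorphism $L^p(\mathcal{G}_x;\mathcal{E}_x)\cong L^p(\mathcal{G}_x;\mathcal{F}_x)$ (compatibly with Lemma~\ref{lp}) that intertwines $\lambda_x$ with its $\mathcal{F}$-counterpart, so $\|f\|_\lambda=\|f\circ\phi^{-1}\|_\lambda$ for all $f$, and the map extends isometrically to the completions $F^p_\lambda(\mathcal{G};\mathcal{E})\cong F^p_\lambda(\mathcal{G};\mathcal{F})$.

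I expect the main obstacle to be the bookkeeping in the ``only if'' step: verifying that an \emph{arbitrary} unital isometric isomorphism of $L^p$-operator algebras genuinely transports the whole combinatorial package (admissible pairs, the inverse semigroup $N(\cdot)$, the Weyl groupoid, and the Weyl twist) and does so compatibly with the induced homeomorphism of unit spaces and, crucially, with the central $\mathbb{T}$ in the exact sequence. This is exactly where the uniqueness of the $C^\ast$-core (Theorem~\ref{core}) does the heavy lifting --- it forces $\Phi$ to carry the canonical abelian subalgebra to the canonical abelian subalgebra --- so that the Weyl constructions on the two sides match up and Theorems~\ref{groupoid_rec} and~\ref{main_result} apply on each side. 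By contrast, the ``if'' direction is a short direct computation with the convolution formulas, which I would relegate to a brief verification.
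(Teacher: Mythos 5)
Your proposal is correct and follows essentially the same route as the paper: the corollary is deduced from Theorem \ref{main_result} together with the fact (Proposition \ref{homo_core}, i.e.\ the uniqueness of the $C^\ast$-core) that a unital isometric isomorphism restricts to a $\ast$-isomorphism of the cores, so that the Weyl groupoid and Weyl twist are invariants of the algebra. Your expanded verification that admissible pairs, the inverse semigroup $N(\cdot)$, and the central $\mathbb{T}$ are all transported compatibly, and your explicit construction $f\mapsto f\circ\phi^{-1}$ for the converse, are precisely the details the paper leaves implicit.
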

 	
 	There is an group isomorphism between $H^2(\mathcal{G},\mathbb{T})$ \cite[Definition 1.16]{RenaultJean1980AGAt} and the subset of $\textrm{Tw}(\mathcal{G})$ consisting of twists by continuous sections \cite[Section 4]{kumjian_1986}. We therefore have the  following corollary, analogous to Theorem \ref{main-group}, the main result in Section 4. 
 	
 	\begin{corollary}
 		\label{twist_rig} Let $p \in [1, \infty) \setminus \{2\}$.
 		Let $\mathcal{G}$ and $\mathcal{H}$ be topologically principal, locally compact, Hausdorff, étale groupoids with compact unit spaces, let $\sigma$ be a 2-cocycle for $\mathcal{G}$ and $ \rho$ be a 2-cocycle for $\mathcal{H}$. Then there is an isometric isomorphism $F^p_{\lambda}(\mathcal{G}, \sigma) \cong F^p_{\lambda}(\mathcal{H}, \rho)$ if and only if there is an isomorphism of groupoids $\mathcal{G} \cong \mathcal{H}$ and $\sigma$ and $\rho$ are cohomologous.
 	\end{corollary}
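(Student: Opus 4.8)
The plan is to deduce this corollary from the one for twists proved just above, by passing from $2$-cocycles to the twists they determine. First I would recall from Section~6 that for a continuous normalized $2$-cocycle $\sigma$ on a Hausdorff étale groupoid $\mathcal{G}$ the associated twist $\mathcal{E}_\sigma = \mathcal{G}\times\mathbb{T}$ carries the continuous section $\gamma\mapsto(\gamma,1)$, and that the assignment $f\mapsto\widetilde f$, $\widetilde f(z,\gamma)=zf(\gamma)$, is an isometric isomorphism of algebras $F^p_\lambda(\mathcal{G},\sigma)\xrightarrow{\ \cong\ }F^p_\lambda(\mathcal{G};\mathcal{E}_\sigma)$; the same holds for $\mathcal{H}$ and $\rho$. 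Since $\mathcal{G}$ and $\mathcal{H}$ are topologically principal, locally compact, Hausdorff, étale with compact unit space, the twists $\mathcal{E}_\sigma$ and $\mathcal{E}_\rho$ fall under the hypotheses of Theorem~\ref{main_result} and its corollary. Consequently, an isometric isomorphism $F^p_\lambda(\mathcal{G},\sigma)\cong F^p_\lambda(\mathcal{H},\rho)$ exists if and only if an isometric isomorphism $F^p_\lambda(\mathcal{G};\mathcal{E}_\sigma)\cong F^p_\lambda(\mathcal{H};\mathcal{E}_\rho)$ exists, which by that corollary happens precisely when there is a groupoid isomorphism $\psi\colon\mathcal{G}\to\mathcal{H}$ with $[\psi^{\ast}\mathcal{E}_\rho]=[\mathcal{E}_\sigma]$ in $\textrm{Tw}(\mathcal{G})$.

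It then remains to translate the condition $[\psi^{\ast}\mathcal{E}_\rho]=[\mathcal{E}_\sigma]$ in $\textrm{Tw}(\mathcal{G})$ into ``$\sigma$ and $\rho$ are cohomologous (via $\psi$)''. Here I would use that $\psi^{\ast}\mathcal{E}_\rho$ is properly isomorphic to $\mathcal{E}_{\rho\circ(\psi\times\psi)}$, so the question becomes whether $\mathcal{E}_\sigma$ and $\mathcal{E}_{\rho\circ(\psi\times\psi)}$ are properly isomorphic. By the computation in Section~6 recovering the cohomology class of a $2$-cocycle from any continuous section of the twist it defines, composing a continuous section of one of these twists with a proper isomorphism shows that a proper isomorphism $\mathcal{E}_\sigma\cong\mathcal{E}_{\rho\circ(\psi\times\psi)}$ forces $\sigma\sim\rho\circ(\psi\times\psi)$ on $\mathcal{G}$; conversely a cohomology $\sigma(\alpha,\beta)\overline{\rho\circ(\psi\times\psi)(\alpha,\beta)}=b(\alpha)b(\beta)\overline{b(\alpha\beta)}$ yields the standard proper isomorphism $(\alpha,z)\mapsto(\alpha,zb(\alpha))$. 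This is exactly the content of the stated group isomorphism between $H^2(\mathcal{G},\mathbb{T})$ and the subgroup of $\textrm{Tw}(\mathcal{G})$ consisting of twists admitting a continuous section, under which $[\sigma]\leftrightarrow[\mathcal{E}_\sigma]$. Chaining these equivalences yields the claimed statement.

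For the ``if'' direction one can alternatively argue directly, in the spirit of the proof of Theorem~\ref{main-group}, bypassing the twist machinery: assuming $\mathcal{G}=\mathcal{H}$ and a continuous $b\colon\mathcal{G}\to\mathbb{T}$ with $\sigma(\alpha,\beta)\overline{\rho(\alpha,\beta)}=b(\alpha)b(\beta)\overline{b(\alpha\beta)}$, the map $\phi(f)=bf$ on $C_c(\mathcal{G})$ is a bijective isometry intertwining $\ast_\sigma$ with $\ast_\rho$, and conjugating each left regular representation $\lambda_x^\sigma$ by the isometry of $l^p(\mathcal{G}_x)$ given by multiplication by $\gamma\mapsto b(\gamma)$ shows $\norm{\phi(f)}_{\lambda^\rho}=\norm{f}_{\lambda^\sigma}$ for all $f\in C_c(\mathcal{G})$; hence $\phi$ extends to an isometric isomorphism of the completions $F^p_\lambda(\mathcal{G},\sigma)\cong F^p_\lambda(\mathcal{G},\rho)$.

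The main obstacle I anticipate is bookkeeping rather than a genuine difficulty: one must verify carefully that ``properly isomorphic twists'' corresponds to ``cohomologous cocycles'' and not to some coarser equivalence, that pulling back $\rho$ along the recovered isomorphism $\psi$ is compatible with all the identifications in the diagram of Theorem~\ref{main_result} and with the $H^2$-to-$\textrm{Tw}$ isomorphism, and that the $1$-cochains implementing cohomology (and the sections implementing proper isomorphisms) can be taken to behave well with respect to units so that normalized cocycles are compared by normalized cochains. Once these normalization and compatibility points are checked, the corollary is an immediate consequence of Theorem~\ref{main_result}, the identification $F^p_\lambda(\mathcal{G},\sigma)\cong F^p_\lambda(\mathcal{G};\mathcal{E}_\sigma)$, and the $H^2(\mathcal{G},\mathbb{T})\cong$ subgroup of $\textrm{Tw}(\mathcal{G})$ already recorded.
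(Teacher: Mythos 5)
Your proposal follows exactly the route the paper intends: the corollary is deduced from the preceding corollary on twists together with the identification of $H^2(\mathcal{G},\mathbb{T})$ with the subgroup of $\mathrm{Tw}(\mathcal{G})$ of twists admitting continuous sections, using the isometric identification $F^p_\lambda(\mathcal{G},\sigma)\cong F^p_\lambda(\mathcal{G};\mathcal{E}_\sigma)$ from Section 6. The paper leaves these details (including the pullback of $\rho$ along the recovered isomorphism and the translation between proper isomorphism and cohomology) implicit, and your write-up fills them in correctly.
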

 	
 	Observe that the above result is far from being a generalization of Theorem \ref{main-group} as locally compact groups are neither étale nor topologically principal, and even though discrete groups are étale, they are not topological principal. In fact, the only group that can be topological principle is the trivial group.

 	\section*{Acknowledgments}
 	We would like to thank Jan Gundelach for the careful reading of the first draft of the paper and his comments. Finally, we want to thank the anonymous referee for his or her thorough feedback.

\end{document}